\theoremstyle{plain}
\newtheorem{theorem}{Theorem}
\newtheorem{prop}{Proposition}
\newtheorem{lemma}{Lemma}
\newtheorem{cor}{Corollary}
\theoremstyle{definition}
\newtheorem{example}{Example}
\newtheorem{remark}{Remark}
\newcommand{\beq}{\begin{equation}}
\newcommand{\eeq}{\end{equation}}
\newcommand{\nn}{\nonumber}
\newcommand{\e}{\epsilon}
\newcommand{\p}{\partial}
\newcommand{\CC}{\mathbb{C}}
\newcommand{\F}{\mathcal{F}}
\newcommand{\Q}{\mathcal{Q}}
\newcommand{\V}{\mathcal{V}}
\newcommand{\I}{\mathcal{I}}
\newcommand{\bt}{{\bf t}}
\begin{document}

\title[Analytic theory of Legendre-type transformations]
{Analytic theory of Legendre-type transformations \\ for a Frobenius manifold}

\author{Di Yang}
\address{School of Mathematical Sciences, University of Science and Technology of China, Hefei 230026, P. R. China}
\email{diyang@ustc.edu.cn}
\date{}
\keywords{Frobenius manifold, WDVV equations, Legendre-type transformation, isomonodromic deformation, monodromy data, $\kappa$th partition function}
\subjclass[2020]{Primary: 53D45; Secondary: 34M56, 14N35.}

\begin{abstract} 
Let $M$ be an $n$-dimensional Frobenius manifold. Fix $\kappa\in\{1,\dots,n\}$. Assuming certain invertibility, Dubrovin introduced the Legendre-type 
transformation $S_\kappa$, which transforms $M$ to an $n$-dimensional Frobenius manifold 
$S_\kappa(M)$.  
In this paper, we show that these $S_\kappa(M)$ share the same monodromy data 
at the Fuchsian singular point of the Dubrovin connection, and that for the case when $M$ is semisimple they also share the same 
 Stokes matrix and the same central connection matrix. 
A straightforward application of the monodromy identification is the following:
if we know the monodromy data of some semisimple Frobenius manifold~$M$, we immediately obtain  
those of its Legendre-type transformations. Another application gives the identification between the  
$\kappa$th partition function of a semisimple Frobenius manifold $M$ and the topological partition function of $S_{\kappa}(M)$.
\end{abstract}
\maketitle

\setcounter{tocdepth}{1}
\tableofcontents

\section{Introduction}
The notion of Frobenius manifold was introduced by B.~Dubrovin in the beginning of the 1990s to give 
a coordinate-free description of the 
Witten--Dijkgraaf--Verlinde--Verlinde (WDVV) equations \cite{DVV91-1, DVV91-2, W90} that 
appeared in two-dimensional topological field theories; see~\cite{Du92, Du96, DZ-norm, Givental, KM94}. 
Nowadays, it is also well known as {\it Dubrovin--Frobenius manifold}, and is 
 a central subject in the studies of the bihamiltonian structures of hydrodynamic type~\cite{Du96, Du98-2, DN83}, 
the Gromov--Witten theory~\cite{BF, KM94, LT, MS, RT}, 
the Darboux--Egorov geometry and isomonodromic deformations~\cite{BJL, Du92, Du96, Du99, Sabbah}, the quantum 
singularity theory~\cite{FJR13, Hertling, Saito81}, and etc. 

Recall that a {\it Frobenius algebra} is a triple $(A, \, e, \, \langle\,,\,\rangle)$, 
where $A$ is a commutative and associative algebra over~$\CC$
with unity~$e$, and $\langle\,,\,\rangle: A\times A\rightarrow \mathbb{C}$ is a symmetric,  
non-degenerate and bilinear product satisfying 
$\langle x\cdot y , z\rangle = \langle x, y\cdot z\rangle$, $\forall\,x,y,z \in A$. 
A {\it Frobenius structure of charge~$D$} \cite{Du96} on a complex manifold~$M$ is a family of 
Frobenius algebras $(T_p M, \, e_p, \, \langle\,,\,\rangle)_{p\in M}$, 
depending holomorphically on~$p$ and satisfying the following three axioms:
\begin{itemize}
\item[{\bf A1}]  The metric $\langle\,,\,\rangle$ is flat; moreover, 
denote by~$\nabla$ the Levi--Civita connection of~$\langle\,,\,\rangle$, then it is required that 
\beq\label{flatunity511}
\nabla e = 0.
\eeq
\item[{\bf A2}] Define a $3$-tensor field~$c$ by 
$c(X,Y,Z):=\langle X\cdot Y,Z\rangle$, for $X,Y,Z$ being holomorphic vector fields on~$M$. 
Then the $4$-tensor field $\nabla c$ is required to be 
symmetric. 
\item[{\bf A3}]
There exists a holomorphic vector field~$E$ on~$M$ satisfying
\begin{align}
&  [E, X\cdot Y]-[E,X]\cdot Y - X\cdot [E,Y] = X\cdot Y, \label{E2}\\
&  E \langle X,Y \rangle - \langle [E,X],Y\rangle - \langle X, [E,Y]\rangle 
= (2-D) \langle X,Y\rangle. \label{E3}
\end{align}
\end{itemize}
A complex manifold endowed with a Frobenius structure of charge~$D$ is  
called a {\it Frobenius manifold of charge~$D$}, with $\langle \, ,\, \rangle$ 
being called the {\it invariant flat metric} and $E$ the {\it Euler vector field}.
We denote the Frobenius manifold by $(M,\, \cdot,\, e,\, \langle \,,\, \rangle,\, E)$. 
In Dubrovin's original definition~\cite{Du96} it is also required that 
\beq \nabla\nabla E = 0. \label{linear}
\eeq
This condition can actually follow from~\eqref{E3}; see e.g.~\cite{CDG20}.

If we drop the axiom A3 from the above definition and keep all other axioms, 
then we obtain a structure that we call the {\it pre-Frobenius structure}. 
A complex manifold with a pre-Frobenius structure
is called a {\it pre-Frobenius manifold}. 
A non-zero element~$a$ of a Frobenius algebra is called a {\it nilpotent element} or a {\it nilpotent}, if 
there exists $m\ge0$ such that $a^m=0$. A Frobenius algebra 
is called {\it semisimple}, if it contains no nilpotents. 
A point~$p$ on a (pre-)Frobenius manifold~$M$ is called a {\it semisimple point}, if $T_pM$ is semisimple. 
Since semisimplicity is an open condition, if $p$ is a semisimple point of~$M$ then there is an open 
neighborhood~$U$ of~$p$ such that all points in~$U$ are semisimple points. 
A (pre-)Frobenius manifold is called {\it semisimple}, if its generic points are semisimple points.

Let $(M, \, \cdot, \, e, \, \langle\,,\,\rangle)$ be an $n$-dimensional pre-Frobenius manifold. 
Take $(B, \phi: B\rightarrow \CC^n)$ a flat coordinate chart of~$M$ with respect to $\langle\,,\,\rangle$. 
Here $B$ is an open set and $\phi(B)$ is a small ball (for simplicity). The notation will be used throughout the paper. 
The flat coordinates ${\bf v}=(v^1,\dots,v^n)\in \phi(B)$ can be chosen so that 
one of the coordinate vector field, say, $\frac{\p}{\p v^\iota}$,  
coincides with the unity vector field~$e$, i.e., 
$\frac{\p}{\p v^\iota}=e$.
Here $\iota\in\{1,\dots,n\}$.
The coordinates of the center of~$\phi(B)$
are denoted by $v^1_o,\dots,v^n_o$.
Let 
\beq\label{defetaandetamat}
\eta_{\alpha\beta} := \Bigl \langle \frac{\p}{\p v^\alpha}, \frac{\p}{\p v^\beta}\Bigr\rangle, ~ \alpha,\beta=1,\dots,n, \qquad \eta:=(\eta_{\alpha\beta}).
\eeq
By definition, $\eta$ is a symmetric and non-singular constant matrix. Define $\eta^{\alpha\beta}$ via $(\eta^{\alpha\beta})=\eta^{-1}$, 
and denote 
$$c_{\alpha\beta\gamma}({\bf v}) = \Bigl\langle\frac{\p}{\p v^\alpha}\cdot\frac{\p}{\p v^\beta}, \frac{\p}{\p v^\gamma} \Bigr\rangle, 
\quad \alpha,\beta,\gamma=1,\dots,n.$$
Clearly, $c_{\alpha\beta\iota}({\bf v})\equiv \eta_{\alpha\beta}$. 
Let $v_\alpha=\sum_{\rho=1}^n \eta_{\alpha\rho} v^\rho$, $\alpha=1,\dots,n$.
Since $\eta$ is non-degenerate, $v_1,\dots,v_n$ also give a system of flat coordinates of $\langle\,,\,\rangle$. 
Let 
$c^{\alpha}_{\beta\gamma}({\bf v}) = \sum_{\rho=1}^n \eta^{\alpha\rho} c_{\rho \beta\gamma}({\bf v})$. 
Then we have
\beq
\frac{\p}{\p v^\alpha}\cdot\frac{\p}{\p v^\beta}=\sum_{\gamma=1}^n c^\gamma_{\alpha\beta}({\bf v}) \frac{\p}{\p v^\gamma}, \qquad \alpha,\beta=1,\dots,n.
\eeq
By Axiom~A2 and by the symmetric property of $c_{\alpha\beta\gamma}({\bf v})$ we know the local existence of 
a holomorphic function $F^M({\bf v})$ such that 
\beq\label{dddF519}
\frac{\p^3 F^M({\bf v})}{\p v^\alpha \p v^\beta \p v^\gamma} = c_{\alpha\beta\gamma}({\bf v}), \qquad \alpha,\beta,\gamma=1,\dots,n.
\eeq
We call $F^M({\bf v})$ the {\it potential}\footnote{It is also known as a pre-potential or a Frobenius potential.} of the pre-Frobenius manifold~$M$, which is uniquely determined by 
$c_{\alpha\beta\gamma}({\bf v})$ up to adding an arbitrary quadratic function of~${\bf v}$. The associativity in terms of $F^M({\bf v})$ 
reads as follows: for arbitrary $\alpha,\beta,\gamma,\delta=1,\dots,n$, 
\beq\label{wdvv64}
\sum_{\rho,\sigma=1}^n \frac{\p^3 F^M}{\p v^\alpha \p v^\beta \p v^\rho} 
\eta^{\rho\sigma} \frac{\p^3 F^M}{\p v^\sigma \p v^\gamma \p v^\delta} = \sum_{\rho,\sigma=1}^n \frac{\p^3 F^M}{\p v^\delta \p v^\beta \p v^\rho} 
\eta^{\rho\sigma} \frac{\p^3 F^M}{\p v^\sigma \p v^\gamma \p v^\alpha}.
\eeq
Equations~\eqref{wdvv64} are called the {\it WDVV equations of associativity} \cite{Du92, Du96}. We also have
$$
\frac{\p^3 F^M({\bf v})}{\p v^\iota \p v^\alpha \p v^\beta} = \eta_{\alpha\beta}, \quad \alpha,\beta=1,\dots,n.
$$
When there is an Euler vector field satisfying~\eqref{E2} and~\eqref{E3} we have 
$$
E\bigl(F^M({\bf v})\bigr)= (3-D) F^M({\bf v}) + \, {\rm some~quadratic~function~of~{\bf v}}.
$$
The above three sets of equations are called {\it WDVV equations}~\cite{Du96, Du99}. 
(Sometimes the associativity equations~\eqref{wdvv64} alone are called WDVV equations.)

In his study of symmetries of WDVV equations, Dubrovin~\cite{Du96} introduced 
the notion of {\it Legendre-type transformations} of a (pre-)Frobenius manifold. 
For any fixed $\kappa=1,\dots,n$, under a certain invertibility assumption, the Legendre-type transformation, 
labelled by~$\kappa$ and denoted by~$S_\kappa$, transforms~$M$ 
 to an $n$-dimensional Frobenius manifold $S_\kappa(M)$. One of $S_\kappa(M)$ always exists and is $M$ itself. 
 These Frobenius manifolds $S_\kappa(M)$ share the same multiplication, the same unity vector field
and the same Euler vector field, but they have different invariant flat metrics. 
Roughly speaking, the symmetry of the WDVV equations given by the Legendre-type transformation $S_\kappa$ 
can be described~\cite{Du96} as follows:
\begin{align}
& \hat v_{\alpha} = \frac{\p^2 F^M({\bf v})}{\p v^\kappa\p v^\alpha}, \quad \alpha=1,\dots,n, \label{vvhatcoord}\\
&\hat \eta_{\alpha\beta} = \eta_{\alpha\beta}, \quad \alpha,\beta=1,\dots,n, \label{hatmetricvv} \\
&\frac{\p^2 F^{S_\kappa(M)}}{\p \hat v^\alpha \p \hat v^\beta}
=\frac{\p^2 F^M}{\p v^\alpha \p v^\beta}, \quad \alpha,\beta=1,\dots,n,\label{FF56}
\end{align}
where $(\hat v^1,\dots,\hat v^n)=:\hat {\bf v}$ 
is a flat coordinate system for $S_\kappa(M)$, and 
$F^{S_\kappa(M)}=F^{S_\kappa(M)}(\hat {\bf v})$ is the potential for $S_\kappa(M)$ (satisfying WDVV equations of associativity). Here, 
$\hat v^\alpha=\sum_{\alpha,\beta=1}^n \eta^{\alpha\beta} \hat v_\beta$. 
We also use $(B,\hat \phi)$ to denote the coordinate chart for $\hat {\bf v}$. Note that $\hat \phi(B)$ itself may not be a ball, but it will be shown to be diffeomorphic to the small ball $\phi(B)$ under the invertibility assumption.
For the existence of $F^{S_\kappa(M)}$ and more details see Section~\ref{section22}.
It will be convenient to collect the Frobenius manifolds $S_\kappa(M)$ all together, and 
we call them the {\it cluster of Frobenius manifolds}. 

Recall that the {\it Dubrovin connection}~\cite{Du96, MS}, {\it aka} the {\it deformed flat connection}, on the (pre-)Frobenius manifold~$M$, 
denoted by~$\widetilde{\nabla}(z)$, $z\in\CC$,
is a one-parameter family of flat affine connections on the tangent bundle $TM$, defined by 
\beq\label{defDubrovinconn}
\widetilde \nabla(z)_{X} Y := \nabla_X Y + z \, X \cdot Y, \quad \forall \, X,Y\in \mathcal{X}(M).
\eeq
Here, we remind the reader that $\nabla$ is the Levi--Civita connection of the invariant flat metric. 
The Euler vector field enables 
one to extend the Dubrovin connection to a flat connection on $M\times \CC^*$ (see~\cite{Du96} 
or~\eqref{defiextendedDubconn} of Section~\ref{section2}), called the 
{\it extended Dubrovin connection} (aka the {\it extended deformed flat connection} or the {\it Dubrovin connection}), 
denoted by $\widetilde{\nabla}$. The differential system 
 associated to~$\widetilde{\nabla}$ has a Fuchsian singularity at $z=0$ and 
 an irregular singularity of Poincar\'e rank~1 at $z=\infty$. 
 By the {\it monodromy data at $z=0$ of~$M$} we mean the 
  monodromy data at $z=0$ of the differential system associated to the extended Dubrovin connection 
 $\widetilde{\nabla}$ on $M\times \CC^*$. It consists of two constant matrices $\mu$ and $R$. Similarly, by the 
 {\it monodromy data at $z=\infty$ of~$M$}, we mean 
  the monodromy data at $z=\infty$ of the differential system associated to 
  $\widetilde{\nabla}$, which is more difficult due to the irregularity.  
However, if $M$ is semisimple, then the monodromy data at $z=\infty$ can be given by 
a constant matrix~$S$ called the {\it Stokes matrix}~\cite{Du96, Du99}. Finally, the 
monodromy data of~$M$ also contains the {\it central connection matrix}~$C$, which 
is the transition matrix between two canonically-constructed fundamental matrix solutions to the 
differential system and is again constant. 
An important theorem of Dubrovin~\cite{Du96, Du99} is that, for the case when $M$ is semisimple, 
one can reconstruct the Frobenius  
structure of~$M$ from the monodromy data $\mu, R, S, C$. Note that when $M$ is semisimple, the 
Frobenius 
manifolds $S_\kappa(M)$ are also semisimple as they share the same multiplication. 
It is then very natural to ask the relations among the monodromy data of the 
Frobenius manifold cluster $S_\kappa(M)$. 
Actually, Dubrovin already showed that the same matrix $\mu$ 
can be served as those for $S_\kappa(M)$ without the semisimplicity assumption. 
We continue Dubrovin's study and show in Section~\ref{sectionmono} 
that the same constant matrix $R$ can be served as those for $S_\kappa(M)$ 
without the semisimplicity assumption.
Moreover, when $M$ is semisimple we show that the  
Frobenius manifold cluster $S_\kappa(M)$ actually share the same monodromy data $\mu, R, S, C$. 

The major source that motivates this study comes from the hermitian matrix model 
and its relations to Frobenius manifolds~\cite{Du09} (cf. also~\cite{CvdLPS, Y1, Y2, YZhou}).

To see this we first recall a few more terminologies. 
Recall that the {\it principal hierarchy} of a (pre-)Frobenius manifold~$M$ is 
a hierarchy of pairwise commuting evolutionary systems of PDEs of hydrodynamic-type, 
introduced by Dubrovin~\cite{Du92, Du96}:
\beq\label{principalh520}
\frac{\p v^\alpha}{\p t^{\beta,m}}  = 
\sum_{\gamma=1}^n \eta^{\alpha\gamma} \p_x \biggl(\frac{\p \theta_{\beta,m+1}}{\p v^\gamma}\biggr), \quad 
\alpha,\beta=1,\dots,n, \, m\ge0.
\eeq
Here, 
 $\theta_{\alpha, i}$ are the coefficients of a choice of the deformed flat coordinates of~$\widetilde{\nabla}(z)$ 
 (see Section~\ref{section2} for more details). 
 Since the $\p_{t^{\iota,0}}$-flow 
of the principal hierarchy~\eqref{principalh520} reads
\beq\label{ix1225}
 \frac{\p v^\alpha}{\p t^{\iota,0}} = \p_x (v^\alpha), \quad \alpha=1,\dots,n,
\eeq
we identify $x$ with $t^{\iota,0}$.
When $M$ is semisimple, Dubrovin--Zhang~\cite{DZ-norm} and independently Givental~\cite{Givental} (cf.~also~\cite{DZ-norm})
construct by using different approaches 
 the {\it topological partition function of~$M$}, aka 
the {\it total descendant potential 
of~$M$}. Under the semisimplicity assumption, for the case when $M$ comes from the 
quantum cohomology~\cite{RT}  of a smooth projective variety~$X$, the topological partition function of~$M$ 
coincides with the partition function of Gromov--Witten invariants of~$X$ (cf.~\cite{BF, Du96, DZ-norm, Givental, KM94, LT, Manin, MS, RT, Teleman}). 
Again under the semisimplicity assumption, when $M$ comes from a certain Landau--Ginzburg $A$-model, 
the topological partition function of~$M$ gives the partition 
function of the corresponding FJRW (Fan--Jarvis--Ruan, Witten) invariants \cite{FJR13, FFJMR, W93}.
The approach of Dubrovin and Zhang uses Virasoro constraints and the jet-variable representation, 
while the approach of Givental uses a quantization formulation. 
Details of their constructions are briefly reviewed in Appendix~\ref{appa86}.

It was observed by Dubrovin~\cite{Du09} (see also~\cite{Y1, Y2})
that the Gaussian Unitary Ensemble (GUE) partition function, that comes from the study of integrals over hermitian 
matrices~\cite{BIZ, DuY17, HZ, Mehta, Zhou}, 
can be identified with part of the topological partition function of the NLS Frobenius manifold. 
Here ``NLS" stands for {\it nonlinear Schr\"odinger}, as the Dubrovin--Zhang 
hierarchy (cf.~\cite{DZ-norm} or Section~\ref{DZapproachA}) associated to this Frobenius manifold 
is equivalent to the extended nonlinear Schr\"odinger hierarchy~\cite{CDZ, CvdLPS, FY, Y1, Y2}. 
Dubrovin~\cite{Du09} also found that the genus expansion of the logarithm of the GUE partition function can be 
obtained from the free energies of the Frobenius manifold 
corresponding to the quantum cohomology of the complex 
projective line~$\mathbb{P}^1$ (for short the $\mathbb{P}^1$ Frobenius manifold); see also~\cite{DuY17, Y1}. 
More precisely, the genus zero GUE free energy 
can be obtained by using the so-called genus zero two-point correlation functions 
of the $\mathbb{P}^1$ Frobenius manifold 
and by using a particular solution to the principal hierarchy of the $\mathbb{P}^1$ Frobenius manifold, and 
the higher genus GUE free energies can be obtained by using the jet representation of the higher genus free energies of the 
$\mathbb{P}^1$ Frobenius manifold 
and by using again the particular solution. 
These two observations suggest 
the identifications between certain particular partition functions of Frobenius manifolds under Legendre-type transformations.
The result on these identifications is given by Theorem~\ref{mainthm1229} of Section~\ref{sectionapp3}. 

In view of principal hierarchies of Frobenius manifolds, 
the Legendre-type transformation exchanges the roles of the space variable and a particular time variable. This means that, 
the principal hierarchy of the Legendre-type transformation $S_\kappa(M)$ of 
a Frobenius manifold~$M$ can be obtained by viewing the time variable $t^{\kappa,0}$
of the principal hierarchy of~$M$ as the new space variable and by using the 
coordinate transformation given by the Legendre-type transformation. Exchanging space and time in general 
belongs to linear reciprocal transformations~\cite{CDZ, LWZ, Pavlov, Tsarev, XZ}.  
It is shown in~\cite{XZ} that under a space-time exchange a bihamiltonian hierarchy of 
hydrodynamic type is transformed to a bihamiltonian one.
In~\cite{LWZ}, it is shown that under the space-time exchange a bihamiltonian hierarchy with dispersions 
is again transformed to a bihamiltonian one and that the central 
invariants~\cite{DLZ06, LZ05, Lorenzoni} of the bihamiltonian structures are kept invariant.
According to~\cite{DLZ}, when the central invariants are all constants, the bihamiltonian integrable hierarchies 
admit $\tau$-functions. It is then natural expect tau-functions for bihamiltonian hierarchies with constant central invariants under 
linear reciprocal transformations are related. Corollary~\ref{cortau} of Section~\ref{sectionapp3} give a positive answer to this for 
Dubrovin--Zhang hierarchies~\cite{BPS12-1, BPS12-2, DZ-norm, LWZ21} of semisimple Frobenius manifolds. 
We note that a few special cases along this direction can be found in \cite{CDZ, CvdLPS, CvdLPS2, FY, FYZ, Y1, Y2}.


The paper is organized as follows. In Section~\ref{section2}, we review the theory of Frobenius manifolds. 
In Section~\ref{section22}, we recall the definition of the Legendre-type transformation of a Frobenius manifold.
In Section~\ref{sectionmono} we prove the main theorems of this paper.  
Sections~\ref{sectionapp1}--\ref{sectionapp3} are devoted to applications of the main theorems. 
Appendix~\ref{appexamples} contains more examples of Legendre-type transformations and their monodromy data.

\section{Monodromy data for Frobenius manifolds: a brief review}\label{section2}
In this section, we review the theory of (pre-)Frobenius manifolds~\cite{Du96, Du99, DZ-norm}. The notations will be 
the same as those in the Introduction. 

As we have mentioned in the Introduction, a crucial geometric object on a pre-Frobenius manifold~$M$ 
is the Dubrovin connection $\widetilde \nabla(z)$ (see~\eqref{defDubrovinconn}). 
It is shown in~\cite{Du92} that 
 for any $z\in \CC$, $\widetilde \nabla(z)$ is flat.
Thus there exist $n$ power series 
\beq\label{expandtheta517}
\theta_\alpha({\bf v};z) =: \sum_{m\geq0} \theta_{\alpha,m}({\bf v})z^m,  \quad \alpha=1,\dots,n,
\eeq 
with $\theta_{\alpha,m}({\bf v})$, $\alpha=1,\dots,n$, $m\geq0$,  
being holomorphic functions of~${\bf v}$, such that  
\begin{align}
&
\theta_{\alpha,0}({\bf v})=v_\alpha, \qquad \alpha=1,\dots,n,\label{theta000511}\\
&
\frac{\p^2 \theta_{\gamma,m+1}({\bf v})}{\p v^\alpha\p v^\beta} = \sum_{\sigma=1}^n c^\sigma_{\alpha\beta}({\bf v}) 
\frac{\p \theta_{\gamma,m}({\bf v})}{\p v^\sigma}, \qquad \alpha,\beta=1,\dots,n, \, m\ge0,\label{theta1c}\\
&\bigl\langle \nabla \theta_\alpha({\bf v};z),  \nabla \theta_\beta({\bf v};-z) \bigr\rangle = \eta_{\alpha\beta}, \qquad \alpha,\beta=1,\dots,n.
\label{orthogtheta511}
\end{align}
We can further normalize $\theta_\alpha({\bf v};z)$ by requiring 
\beq\label{thetacali512}
\frac{\p\theta_{\alpha,m+1}({\bf v})}{\p v^\iota} = \theta_{\alpha,m}({\bf v}), \quad \alpha=1,\dots,n, \, m\geq0.
\eeq 

A choice of $(\theta_{\alpha,m}({\bf v}))_{\alpha=1,\dots,n,\,m\geq 0}$ 
satisfying \eqref{theta000511}--\eqref{thetacali512} is called a {\it calibration} on~$M$. With a chosen calibration 
the pre-Frobenius manifold is called {\it calibrated}. 
Below, by a pre-Frobenius manifold we often mean it is a calibrated one.

Following Dubrovin~\cite{Du92, Du96} (cf.~\cite{DZ-norm}), define the {\it genus zero two-point correlation functions} 
$\Omega_{\alpha,m_1;\beta,m_2}^{M, [0]}({\bf v})$, $\alpha,\beta=1,\dots,n$, $m_1,m_2\geq0$, 
for~$M$ by means of generating series as follows:
\beq\label{deftwopoint511}
\sum_{m_1,m_2\ge0} \Omega_{\alpha,m_1;\beta,m_2}^{M, [0]}({\bf v}) z^{m_1} w^{m_2} = 
\frac{\langle \nabla\theta_\alpha({\bf v};z), \nabla\theta_\beta({\bf v};w) \rangle-\eta_{\alpha\beta}}{z+w} , \qquad \alpha,\beta=1,\dots,n.
\eeq
From the definition we have
\begin{align}
& \Omega_{\alpha,m_1;\beta,m_2}^{M, [0]}({\bf v}) = \Omega_{\beta,m_2; \alpha,m_1}^{M, [0]}({\bf v}), \label{symmOO517}\\
& \Omega_{\alpha,m_1;\beta,m_2}^{M, [0]}({\bf v}) = 
\sum_{m=0}^{m_2} (-1)^m \sum_{\rho,\sigma=1}^n\frac{\p \theta_{\alpha,m_1+m+1}({\bf v})}{\p v^\rho} \eta^{\rho\sigma}
\frac{\p \theta_{\beta,m_2-m}({\bf v})}{\p v^\sigma}, \label{genuszerotwopointfor514}
\end{align}
where $\alpha,\beta=1,\dots,n$, $m_1,m_2\geq0$.
In particular, 
\beq
\Omega_{\alpha,m;\beta,0}^{M, [0]}({\bf v}) = \frac{\p \theta_{\alpha,m+1}({\bf v})}{\p v^\beta}, \quad 
\Omega_{\alpha,m;\iota,0}^{M, [0]}({\bf v}) = \theta_{\alpha,m}({\bf v}), \qquad \alpha=1,\dots,n,\, m\ge0. 
\eeq
It also follows from~\eqref{genuszerotwopointfor514} that 
\beq
 \frac{\p\Omega_{\alpha,m_1;\beta,m_2}^{M, [0]}({\bf v})}{\p v^\gamma} 
 = \sum_{\rho,\sigma=1}^n \frac{\p \theta_{\alpha,m_1}({\bf v})}{\p v^\rho} c^{\rho\sigma}_\gamma({\bf v}) \frac{\p \theta_{\beta,m_2}({\bf v})}{\p v^\sigma}, \quad \alpha,\beta,\gamma=1,\dots,n,\, m_1,m_2\geq0. \label{derivative_Omega}
\eeq
More particularly, 
\beq\label{deriOmisc514}
\frac{\p \Omega_{\alpha,0;\beta,0}^{M, [0]}({\bf v})}{\p v^\gamma} 
= c_{\alpha\beta\gamma}({\bf v}), \qquad \alpha,\beta,\gamma=1,\dots,n.
\eeq
By~\eqref{symmOO517} and~\eqref{deriOmisc514} 
we can require the potential 
$F^M({\bf v})$ (cf.~\eqref{dddF519}) to further satisfy 
\beq\label{FOmega512}
\frac{\p^2 F^M({\bf v})}{\p v^\alpha \p v^\beta} = \Omega_{\alpha,0;\beta,0}^{M, [0]}({\bf v}), \qquad \alpha,\beta=1,\dots,n.
\eeq

Let $(M,\, \cdot,\, e,\, \langle \,,\, \rangle,\, E)$ be a Frobenius manifold of charge~$D$. 
In the flat coordinate system ${\bf v}=(v^1,\dots,v^n)$ with $e=\p_\iota$, 
the Euler vector field $E$ has the form
\beq
E = \sum_{\alpha,\beta=1}^n Q^\alpha_\beta v^\beta \frac{\p}{\p v^\alpha} + \sum_{\beta=1}^n \tilde r^\beta \p_\beta,
\eeq
where $Q^\alpha_\beta$ and $\tilde r^\beta$ are constants. (See~\eqref{linear}.) 
It follows from~\eqref{E2} that 
\beq
-[E,e]=e.
\eeq
Therefore, $Q^\alpha_\iota=\delta^\alpha_\iota$, $\tilde r^\iota=0$, $\alpha=1,\dots,n$.
It is shown in~\cite{Du96} (cf.~\cite{Du99}) 
that the deformed flat connection $\widetilde\nabla(z)$ (see~\eqref{defDubrovinconn}) 
can be extended to a flat affine connection $\widetilde\nabla$ on $M\times \CC^*$ by 
$$
\widetilde \nabla_X Y = \widetilde \nabla(z)_X Y
$$
and 
\beq\label{defiextendedDubconn}
\widetilde \nabla_{\frac{\p}{\p z}} X := \frac{\p X}{\p z} + E \cdot X - \frac1 z \mathcal{V} X, \quad 
\widetilde \nabla_{\frac{\p}{\p z}} \frac{\p}{\p z} := 0,  \quad  \widetilde \nabla_X \frac{\p}{\p z} := 0
\eeq
for $X, Y \in \{\mbox{holomorphic vector field on } M \times \mathbb{C}^* \mbox{ with 
zero component along } \frac{\p}{\p z}\}$. Here, 
\beq
\mathcal{V}:=\frac{2-d}2 {\rm id} - \nabla E.
\eeq
We call~$\widetilde\nabla$ the {\it extended deformed flat connection} on $M\times\CC^*$. 
A holomorphic function $f=f({\bf v};z)$ on some open subset of $M\times \CC^*$ is called {\it $\widetilde \nabla$-flat} if 
\beq
\widetilde \nabla df=0, \quad d:=\sum_{\alpha=1}^n \frac{\p}{\p v^\alpha} dv^\alpha.
\eeq

We will assume that the flat coordinates $v^1,\dots,v^n$ can be chosen so that $\frac{\p}{\p v^\iota}=e$ and 
\begin{align}
&E=\sum_{\beta=1}^n E^\beta({\bf v}) \frac{\p}{\p v^\beta}, \quad E^\beta({\bf v}) 
= \Bigl(1-\frac{d}2-\mu_\beta\Bigr) v^\beta + r^\beta, \label{E517} 
\end{align}
where $\mu_1,\dots,\mu_n, r^1,\dots,r^n$ are constants,  
$\mu_\iota=-\frac{d}2$ and $r^\iota=0$. Denote $\mu={\rm diag}(\mu_1,\dots,\mu_n)$. 
Clearly, $\mu$ is the matrix for the linear operator $\mathcal{V}$ with respect to $\frac{\p}{\p v^\alpha}$, $\alpha=1,\dots,n$.

Following~\cite{Du96,Du99}, let $\mathcal{U}$ denote the matrix of multiplication by the Euler vector field:
\beq
\mathcal{U}^\alpha_\beta({\bf v}) = \sum_{\rho=1}^n E^\rho({\bf v}) c^\alpha_{\rho\beta}({\bf v}).
\eeq
Then 
the differential system for the gradient $y^\alpha=y^\alpha({\bf v};z)=\sum_{\beta=1}^n \eta^{\alpha\beta} \frac{\p f({\bf v};z)}{\p v^\beta}$ 
of a {\it $\widetilde \nabla$-flat} holomorphic function $f({\bf v};z)$ 
reads 
\begin{align}
& \frac{\p y}{\p v^\alpha} = z c_\alpha({\bf v}) y, \quad \alpha=1,\dots,n, \label{pdedub517}\\
& \frac{d y}{d z} = \Bigl(\mathcal{U}({\bf v})+\frac{\mu}{z} \Bigr) y, \label{odedub517}
\end{align}
where $y=(y^1,\dots,y^n)^T$ and $c_\alpha({\bf v})=(c_{\alpha\beta}^\gamma({\bf v}))$.
It is shown in~\cite{Du96, Du99} that for $\phi(B)$ sufficiently small, 
there exists a fundamental matrix solution to \eqref{pdedub517}--\eqref{odedub517} of the form
\beq\label{mY517}
\mathcal{Y}_0({\bf v};z) = \Theta({\bf v};z) z^\mu z^R,
\eeq
where $R$ is a constant matrix, $\Theta({\bf v};z)$ is an analytic matrix-valued function on $B\times \CC$ satisfying 
\beq
\Theta({\bf v};0) = I, \quad \eta^{-1} \Theta({\bf v};-z)^T \eta \Theta({\bf v};z)= I.
\eeq
The matrices $\mu,R$ are 
 the {\it monodromy data at $z=0$ of the Frobenius manifold} \cite{Du96, Du99}; they have the following properties:
\begin{align}
& R=\sum_{s\ge1} R_s, \quad z^\mu R z^{-\mu} = \sum_{s\ge1} R_s z^s, \label{Rmu512}\\
& \eta^{-1} R_s^T \eta = (-1)^{s+1} R_s, \quad s\ge1.
\end{align}
Here the two summations in~\eqref{Rmu512} are both finite sums. From~\eqref{Rmu512} we know that for $s\ge1$,
\beq
(R_s)^\alpha_\beta \neq 0 \quad \mbox{only if } \mu_\alpha-\mu_\beta =s.
\eeq 
This implies that $R$ is nilpotent. 
Note that the constant matrix $R$ may not be unique~\cite{Du99}; below we always fix a choice of~$R$.
 
The fundamental matrix solution~\eqref{mY517} gives the existence of $\widetilde \nabla$-flat multi-valued functions 
$\tilde{v}_1({\bf v};z),\dots,\tilde{v}_n({\bf v};z)$ on $B \times \CC^*$ of the form
\begin{align}
(\tilde{v}_1({\bf v};z),\dots,\tilde{v}_n({\bf v};z)) = (\theta_1({\bf v};z),\dots,\theta_n({\bf v};z)) z^{\mu} z^R,
\end{align}
where $\theta_1({\bf v};z),\dots,\theta_n({\bf v};z)$ 
are analytic on $B\times \CC$. 
We call $\tilde{v}_1({\bf v};z),\dots,\tilde{v}_n({\bf v};z)$ 
the {\it deformed flat coordinates} for $\widetilde \nabla$.
From the definition of~$\widetilde\nabla$ we see that 
the coefficients $\theta_{\alpha,m}({\bf v})$ of $\theta_{\alpha}({\bf v};z)$ as a power series of~$z$, 
$\alpha=1,\dots,n$, $m\geq0$, satisfy \eqref{theta000511}--\eqref{thetacali512} (that is why we use the same notation $\theta_{\alpha}({\bf v};z)$) and the following equation 
\begin{align}
& E \biggl(\frac{\p\theta_{\alpha,m}({\bf v})}{\p v^\beta}\biggr) = 
(m+\mu_\alpha+\mu_\beta) \, \frac{\p\theta_{\alpha,m}({\bf v})}{\p v^\beta} + 
\sum_{\gamma=1}^n \sum_{k=1}^m (R_k)^\gamma_\alpha  \frac{\p\theta_{\gamma,m-k}({\bf v})}{\p v^\beta} , \quad m\geq 0. \label{theta2c} 
\end{align}

A choice of holomorphic functions $(\theta_{\alpha,m}({\bf v}))_{\alpha=1,\dots,n,\,m\geq 0}$ 
satisfying \eqref{theta000511}--\eqref{thetacali512} and~\eqref{theta2c} 
is called a {\it calibration on~$M$ associated to $(\mu,R)$} (cf.~\cite{DLYZ, DZ-norm}), 
for short a {\it calibration}.
A Frobenius manifold with a chosen calibration is called {\it calibrated}. 
With a calibration for~$M$, 
one can verify 
that the genus zero two-point correlation functions 
defined in~\eqref{deftwopoint511} must satisfy  
\begin{align}
& 
E \Bigl(\Omega_{\alpha,m_1;\beta,m_2}^{M,[0]}({\bf v})\Bigr) 
= (m_1+m_2+1+\mu_\alpha+\mu_\beta) \Omega_{\alpha,p;\beta,q}^{M,[0]}({\bf v}) 
+ \sum_{r=1}^{m_1} \sum_{\gamma=1}^n (R_r)^\gamma_\alpha \Omega_{\gamma,m_1-r;\beta,m_2}^{M,[0]}({\bf v})  \nn\\
& \quad 
+\sum_{r=1}^{m_2} \sum_\gamma (R_r)^\gamma_\beta \Omega_{\alpha,m_1;\gamma,m_2-r}^{M,[0]}({\bf v})
+ \sum_{\gamma=1}^n (-1)^{m_2} (R_{m_1+m_2+1})_\alpha^\gamma \eta_{\gamma\beta}, \label{homoEO227}
\end{align}
where $\alpha,\beta=1,\dots,n$, $m_1,m_2\geq0$.
In particular, 
\begin{align}
& E \Bigl(\Omega_{\alpha,0;\beta,0}^{M,[0]}({\bf v})\Bigr)
= (1+\mu_\alpha+\mu_\beta) \Omega_{\alpha,0;\beta,0}^{M,[0]}({\bf v})
+ \sum_{\gamma=1}^n (R_{1})_\alpha^\gamma \eta_{\gamma\beta}. \label{EparticularO517}
\end{align}

We now restrict the consideration to the case that the Frobenius manifold~$M$ is {\it semisimple}. 
Following~\cite{Du96, Du99}, let $u_1({\bf v}),\dots, u_n({\bf v})$ be the eigenvalues 
of $\mathcal{U}({\bf v})$, i.e., 
\beq\label{charE}
\det (\lambda I-\mathcal{U}({\bf v})) = (\lambda-u_1({\bf v}))\cdots(\lambda-u_n({\bf v})),
\eeq
and let $M^0\subset M$ be the open set where all the roots are pairwise distinct.
According to~\cite{Du92, Du96, Du99}, 
the functions $u_1({\bf v}),\dots, u_n({\bf v})$ can be served as local coordinates on $M^0$ and 
\begin{align}
& \frac{\p}{\p u_i} \cdot \frac{\p}{\p u_j} = \delta_{ij} \frac{\p}{\p u_j}, \qquad i,j=1,\dots,n, \label{mulitiuiuj}\\
& \Bigl\langle \frac{\p}{\p u_i}, \frac{\p}{\p u_j} \Bigr\rangle =0, \qquad i\neq j. \label{produiuj}
\end{align}
The local coordinates $u_1,\dots,u_n$ on $M^0$ are called the {\it canonical coordinates}~\cite{Du96, Du98}.  
They are defined by~\eqref{charE} up to permutations, and are also uniquely determined by \eqref{mulitiuiuj}, \eqref{produiuj}
up to permutations and shifts by constants. 
Clearly, 
\beq\label{ecanonical227}
e= \sum_{i=1}^n \frac{\p}{\p u_i}.
\eeq

Take some point $p^*\in M^0$, and we will stay in some open neighborhood of~$p^*$, which is contained in~$M^0$. 
Denote by $f_1,\dots,f_n$ the orthonormal frame
\beq\label{finormalize}
f_i := \frac{1}{\sqrt{\eta_{ii}({\bf u})}}\frac{\p}{\p u_i}, \quad {\rm with}~\eta_{ii}({\bf u}):=\Bigl\langle \frac{\p}{\p u_i}, \frac{\p}{\p u_i} \Bigr\rangle, \quad i=1,\dots,n,
\eeq
choosing arbitrary signs of the square roots. 
Let $\Psi({\bf u})=(\psi_{i\alpha}({\bf u}))$ be the 
transition matrix from the orthonormal frame $(f_1,\dots,f_n)$ to the frame $(\frac{\p}{\p v^1},\dots,\frac{\p}{\p v^n})$, i.e.,
\beq\frac{\p}{\p v^\alpha}=\sum_{i=1}^n \psi_{i\alpha}({\bf u}) f_i , \quad \alpha=1,\dots,n.\eeq 
Then we have 
\beq
\Psi({\bf u}) \mathcal{U}({\bf v}({\bf u})) \Psi({\bf u})^{-1} = {\rm diag}(u_1,\dots,u_n) =: U({\bf u})
\eeq
and 
\beq\label{orthoeP}
\Psi({\bf u})^T \Psi({\bf u}) = \eta.
\eeq

Denote 
\beq\label{defUV0103}
 V({\bf u})=\Psi({\bf u}) \mu \Psi({\bf u})^{-1}.
\eeq
We know from~\cite{Du96} that $V({\bf u})$ is anti-symmetric. In other words, it takes values in ${\rm so}(n,\CC)$. 
This matrix-valued function satisfies~\cite{Du99} the following hamiltonian equations:
\beq\label{veq0103}
\frac{\p V}{\p u_i} = \{V, H_i(V; {\bf u})\}, \quad i=1,\dots,n,
\eeq
where $\{\,,\}$ denotes the Lie--Poisson bracket on ${\rm so}(n,\CC)$, i.e., 
\beq
\{V_{ij}, V_{kl}\} = V_{il} \delta_{j,k} - V_{jl}\delta_{i,k}+V_{jk} \delta_{i,l} - V_{ik} \delta_{j,l},
\eeq
and $H_i$ are hamiltonians (explicitly containing ${\bf u}$) given by 
\beq
H_i(V; {\bf u}) = \frac12 \sum_{j\neq i} \frac{V_{ij}^2}{u_i-u_j}, \quad i=1,\dots,n.
\eeq
It can be verified that the one-form $\sum_{i=1}^n H_i (V({\bf u}); {\bf u}) d u_i$
is closed. So by the Poincar\'e lemma we know that locally there exists an analytic function 
$\tau_I({\bf u})$, such that 
\beq\label{isotaueq}
d \log \tau_I({\bf u}) = \sum_{i=1}^n H_i (V({\bf u}); {\bf u}) d u_i.
\eeq
We call $\tau_I({\bf u})$ the {\it isomonodromic tau-function of the Frobenius manifold} (see e.g.~\cite{Du99}).

For a fundamental matrix solution $\mathcal{Y}({\bf v};z)$ to~\eqref{pdedub517}--\eqref{odedub517}, let 
\beq
Y({\bf u};z)=\Psi({\bf u}) \mathcal{Y}({\bf v}({\bf u});z). 
\eeq
Then $Y=Y({\bf u};z)$ satisfies the following equations:
\begin{align}
&\frac{\p Y}{\p u_i} = (z E_i+V_i({\bf u})) Y, \quad i=1,\dots,n, \label{Yeq10102}\\
&\frac{dY}{dz} = \Bigl(U({\bf u})+\frac{V({\bf u})}z\Bigr) Y, \label{Yeq20102}
\end{align}
where $(E_i)_{ab}= \delta_{ia} \delta_{ib}$ and $V_i({\bf u})={\rm ad}_{U({\bf u})}^{-1} ([E_i,V({\bf u})])$. 
Here we remind the reader that $u_1,\dots,u_n$ are pairwise distinct. 
Then according to~\cite{JMU} (cf.~\cite{Du96}), 
equations~\eqref{Yeq10102}--\eqref{Yeq20102} guarantee the isomonodromicity of the ODE system~\eqref{Yeq20102}. 
We note that equations \eqref{veq0103}, which are the compatibility equations for \eqref{Yeq10102}--\eqref{Yeq20102}, also 
guarantee~\cite{JMU} (cf.~\cite{Du96}) the isomonodromicity of~\eqref{Yeq20102} (we remind the reader that our matrix $V$ has been assumed to be diagonalizable).

Note that $z=\infty$ is an irregular singularity of~\eqref{Yeq20102} 
of Poincar\'e rank~1. Then \cite{Du96, Du99} (cf.~\cite{CDG19, CDG20}) 
 equations~\eqref{Yeq10102}--\eqref{Yeq20102} have a unique formal solution $Y_{\rm formal}({\bf u};z)$ of the form
\beq
Y_{\rm formal}({\bf u};z) = \Phi ({\bf u};z) e^{z U({\bf u})},
\eeq
where $\Phi ({\bf u};z)$ is a formal power series of~$z^{-1}$ 
\beq
\Phi ({\bf u};z) = \sum_{k\geq0} \Phi_{k} ({\bf u}) z^{-k}, \quad \Phi_{0} ({\bf u}) = I,
\eeq
satisfying the orthogonality condition
\beq
\Phi ({\bf u};-z)^T \Phi ({\bf u};z) = I.
\eeq

A line $\ell$ through the origin in the complex $z$-plane is called
{\it admissible} if
\beq\label{admissible32}
{\rm Re} \, z(u_i- u_j)|_{z \in \ell \setminus 0} \neq 0, \quad \forall\, i\neq j.
\eeq
Fix any admissible line $\ell$ with an orientation. Denote by $\phi\in[0,2\pi)$ the angle from the positive real axis and to the positive direction 
of~$\ell$.  We then have a positive part $\ell_+$ of~$\ell$ and a negative part $\ell_-$:
\beq
\ell_+=\{z \mid {\rm arg} \, z=\phi\}, \quad \ell_- = \{z \mid {\rm arg} \, z= \phi-\pi\}.
\eeq
Construct two sectors
\begin{align}
\Pi_{\rm right}: \phi-\pi-\varepsilon<{\rm arg} \, z< \phi + \varepsilon, \\ 
\Pi_{\rm left}: \phi-\varepsilon<{\rm arg} \, z< \phi + \pi+ \varepsilon,
\end{align}
where $\varepsilon$ is a sufficiently small positive number. We know from~\cite{BJL0, BJL, Du96, Du99, Gu21} (cf.~\cite{Gu16}) that 
there exist unique fundamental matrix solutions $Y_{{\rm right}/{\rm left}}({\bf u};z)$
to the ODE system~\eqref{Yeq20102}, analytic in $\Pi_{{\rm right}/{\rm left}}$, 
respectively, such that 
\beq
Y_{{\rm right}/{\rm left}}({\bf u};z) \sim Y_{\rm formal}({\bf u};z)
\eeq
as $|z|\to\infty$ within the sectors. Within the narrow sector
\beq
\Pi_+: \phi-\varepsilon < {\rm arg} z < \phi+\varepsilon
\eeq
we have two analytic fundamental matrix solutions to the ODE system~\eqref{Yeq20102}, which must be related by a matrix $S$:
\beq\label{defiStokes}
Y_{{\rm left}}({\bf u};z) = Y_{{\rm right}}({\bf u};z) S, \quad z\in \Pi_+.
\eeq 
Similarly, according to~\cite{Du96, Du99} (cf.~\cite{BJL}), in the narrow sector
\beq
\Pi_-: \phi-\pi-\varepsilon < {\rm arg} z < \phi-\pi+\varepsilon,
\eeq
we have
\beq
Y_{{\rm left}}({\bf u};z) = Y_{{\rm right}}({\bf u};z) S^T, \quad z\in \Pi_-.
\eeq 
Moreover, by isomonodromicity the matrix~$S$ 
does not depend on~${\bf u}$ (cf.~\cite{Du96, Du99}). We call~$S$ the {\it Stokes matrix subjected to the 
admissible line~$\ell$}, for short the {\it Stokes matrix}. This Stokes matrix gives the monodromy data of~\eqref{Yeq20102} at $z=\infty$.

Within the narrow sector $\Pi_+$, the fundamental matrix solutions 
$Y_{0}({\bf u};z):=\Psi({\bf u}) \mathcal{Y}_0({\bf v}({\bf u});z)$ and $Y_{{\rm right}}({\bf u};z)$ 
to the ODE system~\eqref{Yeq20102} must also be related by a matrix~$C$: 
\beq
Y_{{\rm right}}({\bf u};z) = Y_{0}({\bf u};z) C.
\eeq
The matrix $C$, called the {\it central connection matrix subjected to the 
admissible line~$\ell$}, again does not depend on~${\bf u}$. The following key identities were given in~\cite{Du96, Du99}:
\begin{align*}
& C S^T S^{-1} C^{-1} = e^{2\pi i(\mu+R)}, \nn\\
& S= C^{-1} e^{-\pi i R} e^{-\pi i \mu} \eta^{-1} (C^{-1})^T. \nn
\end{align*}

The quintuple $(\mu, e, R, S, C)$ is called the {\it monodromy data of~$M$~\cite{Du99}}. 
Here $e$ is a selected eigenvector of~$V$ with the eigenvalue~$\mu_\iota$.

Before ending this section, we mention that some of the terminologies were generalized to the set of semisimple points of~$M$; we refer to~\cite{CDG19, CDG20, Gu21} for details.

\section{The Legendre-type transformations}\label{section22}
In this section we give a revisit of 
the Legendre-type transformations~\cite{Du96}. 

Let $M$ be a calibrated pre-Frobenius manifold. 
Fix $\kappa\in\{1,\dots,n\}$. Following Dubrovin~\cite{Du96} (cf.~\eqref{vvhatcoord}, \eqref{FOmega512}),  
we define the holomorphic functions $\hat v_\alpha=\hat v_\alpha({\bf v})$, $\alpha=1,\dots,n$, on~$B$ by 
\beq\label{defhatv59}
\hat v_\alpha({\bf v}) = \Omega_{\alpha,0;\kappa,0}^{M, [0]}({\bf v}), \quad \alpha=1,\dots,n.
\eeq

\begin{lemma}\label{invertible}
Assuming that $\frac{\p}{\p v^\kappa}\cdot$ is invertible on~$B$ for $\phi(B)$ being sufficiently small,  
the holomorphic functions $\hat v_\alpha({\bf v})$, $\alpha=1,\dots,n$,
give a system of local coordinates on~$B$.
\end{lemma}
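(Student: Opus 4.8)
The statement is a direct application of the holomorphic inverse function theorem, so the plan is to show that the Jacobian matrix $\bigl(\p\hat v_\alpha/\p v^\beta\bigr)_{\alpha,\beta=1}^n$ is non-singular on $B$ (shrinking $\phi(B)$ if necessary), and then to invoke that theorem to conclude that the map $\mathbf{v}\mapsto(\hat v_1,\dots,\hat v_n)$ is a local biholomorphism, whence the $\hat v_\alpha$ serve as local coordinates. First I would compute the Jacobian. Since $\hat v_\alpha(\mathbf{v})=\Omega_{\alpha,0;\kappa,0}^{M,[0]}(\mathbf{v})$ by the definition~\eqref{defhatv59}, formula~\eqref{deriOmisc514} gives immediately
$$
\frac{\p\hat v_\alpha}{\p v^\beta}=\frac{\p\Omega_{\alpha,0;\kappa,0}^{M,[0]}}{\p v^\beta}=c_{\alpha\kappa\beta}(\mathbf{v}),\qquad \alpha,\beta=1,\dots,n,
$$
so the entire problem reduces to deciding when the matrix $\bigl(c_{\alpha\kappa\beta}(\mathbf{v})\bigr)_{\alpha,\beta}$ is invertible.

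The next step is to factor this matrix so as to bring in the invertibility hypothesis. Writing $c_{\alpha\kappa\beta}=\langle \frac{\p}{\p v^\alpha}\cdot\frac{\p}{\p v^\kappa},\frac{\p}{\p v^\beta}\rangle=\sum_{\gamma=1}^n c^\gamma_{\alpha\kappa}(\mathbf{v})\,\eta_{\gamma\beta}$, I would recognize the Jacobian as the product
$$
\Bigl(\frac{\p\hat v_\alpha}{\p v^\beta}\Bigr)=\mathcal{C}_\kappa(\mathbf{v})^T\,\eta,
$$
where $\mathcal{C}_\kappa(\mathbf{v})=\bigl(c^\gamma_{\alpha\kappa}(\mathbf{v})\bigr)$ is precisely the matrix, in the frame $\{\frac{\p}{\p v^\alpha}\}$, of the multiplication operator $\frac{\p}{\p v^\kappa}\cdot$ (here $\gamma$ is the output index and $\alpha$ the input index). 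Consequently
$$
\det\Bigl(\frac{\p\hat v_\alpha}{\p v^\beta}\Bigr)=\det\mathcal{C}_\kappa(\mathbf{v})\cdot\det\eta .
$$
Now $\eta$ is a constant non-singular matrix by definition~\eqref{defetaandetamat}, while the assumption that $\frac{\p}{\p v^\kappa}\cdot$ is invertible on $B$ is exactly the statement that $\det\mathcal{C}_\kappa(\mathbf{v})\neq0$ there. Hence the Jacobian determinant is nowhere vanishing on $B$, and the holomorphic inverse function theorem finishes the argument.

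The computation is essentially routine, so I do not expect a genuine obstacle; the only points deserving care are the identification of $\bigl(c^\gamma_{\alpha\kappa}\bigr)$ with the matrix of $\frac{\p}{\p v^\kappa}\cdot$ together with the correct placement of the transpose in the factorization, and the observation that ``a system of local coordinates'' is a purely local assertion, so that the inverse function theorem suffices and no separate global injectivity argument is required. If one insists on a single honest chart, it is enough to shrink $\phi(B)$ about its center $(v^1_o,\dots,v^n_o)$, which is legitimate by the phrase ``for $\phi(B)$ being sufficiently small'' in the hypothesis.
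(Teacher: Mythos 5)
Your proposal is correct and follows essentially the same route as the paper: compute the Jacobian via \eqref{deriOmisc514}, identify it (up to the constant non-singular factor $\eta$) with the matrix of multiplication by $\frac{\p}{\p v^\kappa}$, and conclude non-vanishing of the determinant from the invertibility hypothesis. The paper merely streamlines the bookkeeping by differentiating with respect to the lowered coordinates $v_\beta$, so that the Jacobian is directly $c^\beta_{\kappa\alpha}$ and the factor $\eta$ never appears.
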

\begin{proof}
Using the definition~\eqref{defhatv59} and using~\eqref{deriOmisc514}, we find 
\beq\label{1jacobianvhatv513}
\frac{\p \hat v_\alpha({\bf v})}{\p v_\beta} = c_{\kappa\alpha}^\beta({\bf v}), \quad \alpha,\beta=1,\dots,n.
\eeq
Since $\frac{\p}{\p v^\kappa}\cdot$ is invertible on~$B$, we see from~\eqref{1jacobianvhatv513} that the Jacobian determinant 
$\det\bigl(\frac{\p \hat v_\alpha({\bf v})}{\p v_\beta}\bigr)_{1\le\alpha,\beta\le n}$ 
is nowhere vanishing on~$B$. The lemma is proved.
\end{proof}

We have the following proposition.

\begin{prop}[\cite{Du96, DZ-norm}] \label{propprefm510}
Assume that $\frac{\p}{\p v^\kappa}\cdot$ is invertible on the open set~$B$.
Define a family of bilinear products $\langle\,,\,\rangle^{\wedge}$ on $T_pB$, $p\in B$, as follows:
\begin{align}
& \langle x, y\rangle^{\wedge} = \Bigl\langle \frac{\p}{\p v^\kappa} \cdot x\,,\, \frac{\p}{\p v^\kappa} \cdot y \Bigr\rangle, \quad \forall \, x,y\in T_p B. \label{defprod59}
\end{align}
Then the triple 
$(\cdot,\, e, \, \langle\,,\,\rangle^{\wedge})$
forms a pre-Frobenius structure on~$B$. 
\end{prop}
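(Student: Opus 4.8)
The plan is to verify, axiom by axiom, that $(\cdot,\,e,\,\langle\,,\,\rangle^\wedge)$ is a pre-Frobenius structure. Because the multiplication $\cdot$ and the unity $e$ are the same as those of $M$, commutativity, associativity and the unity property are inherited for free, so only three things remain to check: (i) at each $p\in B$ the form $\langle\,,\,\rangle^\wedge$ is symmetric, non-degenerate and invariant, making $(T_pB,e,\langle\,,\,\rangle^\wedge)$ a Frobenius algebra; (ii) Axiom~\textbf{A1}, that $\langle\,,\,\rangle^\wedge$ is flat and that its Levi--Civita connection $\nabla^\wedge$ kills $e$; and (iii) Axiom~\textbf{A2}, that $\nabla^\wedge c^\wedge$ is a symmetric $4$-tensor, where $c^\wedge(X,Y,Z):=\langle X\cdot Y,Z\rangle^\wedge$. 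Throughout I write $L$ for the pointwise invertible operator of multiplication by $\frac{\p}{\p v^\kappa}$, with matrix $L^\gamma_\beta=c^\gamma_{\kappa\beta}({\bf v})$, and $w$ for the algebra inverse of $\frac{\p}{\p v^\kappa}$ (so $\frac{\p}{\p v^\kappa}\cdot w=e$), which exists by the invertibility hypothesis; thus $L^{-1}$ is multiplication by $w$.

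The algebraic axioms in (i) are immediate. Symmetry of $\langle\,,\,\rangle^\wedge$ follows from~\eqref{defprod59} together with commutativity of $\cdot$ and symmetry of $\langle\,,\,\rangle$. Invariance, $\langle x\cdot y,z\rangle^\wedge=\langle x,y\cdot z\rangle^\wedge$, follows by repeatedly applying the invariance $\langle a\cdot b,c\rangle=\langle a,b\cdot c\rangle$ of the old metric together with commutativity and associativity: both sides reduce to $\langle \frac{\p}{\p v^\kappa}\cdot x,\,\frac{\p}{\p v^\kappa}\cdot y\cdot z\rangle$. Non-degeneracy of $\langle\,,\,\rangle^\wedge$ is exactly the invertibility of $L$ combined with non-degeneracy of $\langle\,,\,\rangle$.

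The heart of the argument is flatness. I would use the coordinates $\hat v_\alpha$ furnished by Lemma~\ref{invertible}: by~\eqref{1jacobianvhatv513} and~\eqref{deriOmisc514} the Jacobian of the change of variables is the multiplication operator, $\frac{\p\hat v_\alpha}{\p v^\gamma}=c_{\kappa\alpha\gamma}=(\eta L)_{\alpha\gamma}$ and $\frac{\p\hat v^\gamma}{\p v^\rho}=c^\gamma_{\kappa\rho}=L^\gamma_\rho$. Next I compute the components of $\langle\,,\,\rangle^\wedge$ in the $v$-frame: invariance gives $g^\wedge_{\alpha\beta}=\langle L\tfrac{\p}{\p v^\alpha},L\tfrac{\p}{\p v^\beta}\rangle=\langle L^2\tfrac{\p}{\p v^\alpha},\tfrac{\p}{\p v^\beta}\rangle$, i.e.\ $g^\wedge=\eta L^2$ as a matrix. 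Transporting the dual (inverse) metric to the $\hat v$-coordinates and using the $\eta$-selfadjointness $L^T\eta=\eta L$ of the multiplication operator, all powers of $L$ cancel, $\eta L\cdot(\eta L^2)^{-1}\cdot(\eta L)^T=\eta$, so that $\langle d\hat v_\alpha,d\hat v_\beta\rangle^\wedge=\eta_{\alpha\beta}$ is constant. Hence $\langle\,,\,\rangle^\wedge$ has constant components in the coordinates $\hat v_\alpha$ (equivalently $\hat v^\alpha$): it is flat, these are its flat coordinates, and~\eqref{hatmetricvv} is reproduced. For $\nabla^\wedge e=0$ I observe that $c_{\kappa\alpha\iota}=\eta_{\kappa\alpha}$ gives $e(\hat v_\alpha)=\eta_{\kappa\alpha}$, i.e.\ $e=\frac{\p}{\p\hat v^\kappa}$ has constant coefficients in the new flat coordinates, and a constant-coefficient field in flat coordinates is covariantly constant.

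For Axiom~\textbf{A2} I work in the flat coordinates $\hat v^\alpha$, where $\nabla^\wedge$ is ordinary differentiation, so it suffices to show that $\hat c_{\alpha\beta\gamma}:=c^\wedge(\tfrac{\p}{\p\hat v^\alpha},\tfrac{\p}{\p\hat v^\beta},\tfrac{\p}{\p\hat v^\gamma})$ has totally symmetric first derivatives. The inverse Jacobian gives $\frac{\p}{\p\hat v^\alpha}=L^{-1}\frac{\p}{\p v^\alpha}$, that is, multiplication of $\frac{\p}{\p v^\alpha}$ by $w$; substituting this into $c^\wedge$ and using invariance once more, the factors of $L^{\pm1}$ collapse and I obtain the closed expression
\beq
\hat c_{\alpha\beta\gamma}=\Bigl\langle \frac{\p}{\p v^\alpha}\cdot\frac{\p}{\p v^\beta}\cdot\frac{\p}{\p v^\gamma},\,w\Bigr\rangle,
\eeq
which is manifestly symmetric in $\alpha,\beta,\gamma$. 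The same computation identifies $\frac{\p}{\p\hat v^\gamma}\bigl(\frac{\p^2 F^M}{\p v^\alpha\p v^\beta}\bigr)$ with $\hat c_{\alpha\beta\gamma}$, so this totally symmetric array is the $\hat v$-gradient of the Hessian of $F^M$; by the Poincar\'e lemma on the simply connected chart $\hat\phi(B)$ there is a function $F^{S_\kappa(M)}(\hat{\bf v})$ with $\frac{\p^3 F^{S_\kappa(M)}}{\p\hat v^\alpha\p\hat v^\beta\p\hat v^\gamma}=\hat c_{\alpha\beta\gamma}$, realizing~\eqref{FF56}. Then $\nabla^\wedge c^\wedge$ is the totally symmetric array of fourth $\hat v$-derivatives of $F^{S_\kappa(M)}$, which is precisely~\textbf{A2}. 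I expect the main obstacle to be the global bookkeeping rather than any single identity: one must invoke Lemma~\ref{invertible} and the remark that $\hat\phi(B)$ is diffeomorphic to the ball $\phi(B)$ to guarantee that the change of variables is a genuine diffeomorphism onto a simply connected domain, so that the dual-metric transport and the applications of the Poincar\'e lemma are legitimate, while the pointwise existence of $w$ supplied by the invertibility hypothesis is what makes every cancellation above go through.
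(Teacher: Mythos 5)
Your proposal is correct, and for Axiom A1 it is essentially the paper's argument in matrix clothing: the cancellation $\eta L\cdot(\eta L^2)^{-1}\cdot(\eta L)^T=\eta$ via the self-adjointness $L^T\eta=\eta L$ is exactly what the paper extracts from Lemma~\ref{anotherusefullemma} (namely $\frac{\p}{\p\hat v_\alpha}=L^{-1}\frac{\p}{\p v_\alpha}$, so the two factors of $L$ in~\eqref{defprod59} are eaten by the two Jacobians), and your identification $e=\frac{\p}{\p\hat v^\kappa}$ is the paper's~\eqref{ee513}. Where you genuinely diverge is Axiom A2. The paper takes the expression $\hat c_{\alpha\beta\gamma}=\sum_\rho\frac{\p v^\rho}{\p\hat v^\alpha}c_{\rho\beta\gamma}$, differentiates it once in $\hat v^\delta$, and observes that both resulting terms are symmetric under $\alpha\leftrightarrow\delta$ (the second because $\nabla c$, i.e.\ the fourth derivative of $F^M$, is totally symmetric); no potential for $S_\kappa(M)$ is ever constructed inside the proof. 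You instead push the invariance one step further to the manifestly symmetric closed form $\hat c_{\alpha\beta\gamma}=\bigl\langle\frac{\p}{\p v^\alpha}\cdot\frac{\p}{\p v^\beta}\cdot\frac{\p}{\p v^\gamma},\,w\bigr\rangle$ with $w$ the algebra inverse of $\frac{\p}{\p v^\kappa}$, identify this array as the $\hat v$-gradient of the Hessian of $F^M$, and integrate twice by the Poincar\'e lemma to produce $F^{S_\kappa(M)}$, after which A2 is the total symmetry of its fourth derivatives. Both routes are valid; yours costs the (harmless, given that $\hat\phi(B)$ is diffeomorphic to a ball) appeal to simple connectedness, but buys the coordinate-free formula for $\hat c$ and delivers the potential $F^{S_\kappa(M)}$ satisfying~\eqref{FF56} as a byproduct — something the paper only recovers later, via the calibration and Corollary~\ref{simplecor514}. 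One small caution: your flatness computation as written produces the components of the \emph{dual} metric on the differentials $d\hat v_\alpha$, which come out as $\eta_{\alpha\beta}$, whereas the metric on the coordinate fields $\frac{\p}{\p\hat v_\alpha}$ is $\eta^{\alpha\beta}$ as in~\eqref{metricnew513}; the two statements are equivalent, but you should say which one you mean so the constancy claim is unambiguous.
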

\begin{proof}
For every $p \in B$, we know that $(T_p B,\, e_p, \, \langle\,,\,\rangle)$ is 
a Frobenius algebra with the multiplication~``$\,\cdot\,$". Then using the definition~\eqref{defprod59} it can be verified easily that 
$(T_p B,\, e_p, \, \langle\,,\,\rangle^\wedge)$ is 
a Frobenius algebra with the same multiplication. Here we note that the invertibility of~$\frac{\p}{\p v^\kappa}\cdot$ implies the 
nondegeneracy for $\langle\,,\,\rangle^\wedge$.
Obviously, the product $\langle\,,\, \rangle^\wedge$ depends on~$p$ 
holomorphically.

By Lemma~\ref{invertible} we know that the holomorphic 
$\hat v_1,\dots,\hat v_n$, defined in~\eqref{defhatv59}, can serve as local coordinates on~$B$. 
Before continuing, let us prove a helpful lemma.

\begin{lemma} \label{anotherusefullemma}
The following identities hold:
\beq
\frac{\p}{\p v_\alpha} = \frac{\p}{\p v^\kappa} \cdot \frac{\p}{\p \hat v_\alpha}, \quad \alpha=1,\dots,n.
\eeq
\end{lemma}
\begin{proof}
We have
\beq
\frac{\p}{\p v^\kappa} \cdot \frac{\p}{\p \hat v_\alpha} 
= \sum_{\beta=1}^n \frac{\p v_\beta}{\p \hat v_\alpha} \frac{\p}{\p v^\kappa} \cdot \frac{\p}{\p v_\beta} 
= \sum_{\beta,\gamma=1}^n \frac{\p v_\beta}{\p \hat v_\alpha} c_{\kappa\gamma}^\beta \frac{\p}{\p v_\gamma}
= \frac{\p}{\p v_\alpha} .
\eeq
Here in the last equality we used~\eqref{1jacobianvhatv513}. The lemma is proved.
\end{proof}

We continue to give the verifications of the axioms A1, A2 for 
$(B,\, \cdot,\, e,\, \langle\,,\,\rangle^\wedge)$. Using the definition~\eqref{defprod59} and Lemma~\ref{anotherusefullemma}, we have
\beq\label{metricnew513}
\Bigl\langle \frac{\p }{\p \hat v_\alpha} , \frac{\p }{\p \hat v_\beta}\Bigr\rangle^\wedge
=
\Bigl\langle  \frac{\p}{\p v^\kappa}  \cdot \frac{\p }{\p \hat v_\alpha}, 
 \frac{\p}{\p v^\kappa}  \cdot \frac{\p }{\p \hat v_\beta} \Bigr\rangle
=\Bigl\langle \frac{\p}{\p v_\alpha}, \frac{\p}{\p v_\beta} \Bigr\rangle = \eta^{\alpha\beta}, \quad \alpha,\beta=1,\dots,n.
\eeq
Therefore, $\langle\,,\, \rangle^\wedge$ is a flat metric on~$B$, and 
$\hat v_1,\dots,\hat v_n$ are flat coordinates for $\langle\,,\, \rangle^\wedge$. 
Denote by~$\hat \nabla$ the Levi-Civita connection of $\langle\,,\, \rangle^\wedge$, and denote 
as in the Introduction 
$\hat v^\alpha := \sum_{\beta=1}^n \eta^{\alpha\beta} \hat v_\beta$,  $\alpha=1,\dots,n$, and $\hat {\bf v}:=(\hat v^1,\dots,\hat v^n)$.
Since $(\eta^{\alpha\beta})$ is non-degenerate, the holomorphic functions $\hat v^1,\dots, \hat v^n$ 
also form a flat coordinate system for $\langle\,,\, \rangle^\wedge$ on~$B$. 
The identity~\eqref{metricnew513} can be equivalently written as
\beq\label{flatmetricnew2513}
\Bigl\langle \frac{\p}{\p \hat v^\alpha}, \frac{\p}{\p \hat v^\beta}\Bigr\rangle^\wedge=\eta_{\alpha\beta}, \quad \alpha,\beta=1,\dots,n.
\eeq
We also have
\beq\label{ee513}
e = \frac{\p}{\p v^\iota} = \sum_{\rho=1}^n \frac{\p \hat v^\rho}{\p v^\iota}  \frac{\p}{\p \hat v^\rho}
= \sum_{\rho=1}^n c_{\kappa\iota}^\rho \frac{\p}{\p \hat v^\rho}
=  \frac{\p}{\p \hat v^\kappa},
\eeq
which is flat with respect to~$\hat \nabla$. Hence we have finished the verification for  
the axiom A1.

The multiplication 
tensor $c_{\alpha\beta}^\gamma$, 
written in the coordinate system $(\hat v^1,\dots,\hat v^n)$ on~$B$, denoted $\tilde c_{\alpha\beta}^\gamma$, has the expression
\begin{align}
\tilde c_{\alpha\beta}^\gamma 
&= \sum_{\rho,\sigma,\phi=1}^n
c_{\rho\sigma}^\phi \frac{\p v^\rho}{\p \hat v^\alpha}\frac{\p v^\sigma}{\p \hat v^\beta}\frac{\p \hat v^\gamma}{\p v^\phi}
= \sum_{\rho=1}^n \frac{\p v^\rho}{\p \hat v^\alpha} c^\gamma_{\rho\beta}, \label{tildec513}
\end{align}
where we used associativity and~\eqref{1jacobianvhatv513}. By definition, 
\beq\label{muliplicationonhats513}
\frac{\p}{\p \hat v^\alpha} \cdot \frac{\p}{\p \hat v^\beta} = \sum_{\rho=1}^n \tilde c_{\alpha\beta}^\rho \frac{\p}{\p \hat v^\rho}. 
\eeq
To verify the axiom A2, define  
 the 3-tensor~$\hat c$ by 
$$
 \hat c (x,y,z):=\langle x \cdot y, z\rangle^\wedge, \quad \forall\, x,y,z\in T_p B, \, \forall\,p\in B.
$$
 We know that $\hat c$ is a symmetric 3-tensor on~$B$. 
In the coordinate system $(\hat v^1,\dots,\hat v^n)$,
\beq\label{hatcc519}
\hat c_{\alpha\beta\gamma} := \Bigl\langle \frac{\p}{\p \hat v^\alpha} \cdot \frac{\p}{\p \hat v^\beta}, \frac{\p}{\p \hat v^\gamma} \Bigr\rangle^\wedge 
=\sum_{\rho=1}^n
 \frac{\p v^\rho}{\p \hat v^\alpha} c_{\rho\beta\gamma},
\eeq
where we used \eqref{muliplicationonhats513}, \eqref{tildec513} and~\eqref{flatmetricnew2513}.
So
\beq
\frac{\p \hat c_{\alpha\beta\gamma}}{\p \hat v^\delta} = 
\sum_{\rho=1}^n \frac{\p^2 v^\rho}{\p \hat v^\alpha \p \hat v^\delta} c_{\rho\beta\gamma} + 
 \sum_{\rho,\varepsilon=1}^n
 \frac{\p v^\rho}{\p \hat v^\alpha} \frac{\p c_{\rho\beta\gamma}}{\p v^\varepsilon} \frac{\p v^\varepsilon}{\p \hat v^\delta}.
\eeq
Since $\nabla c$ is a symmetric 4-tensor, we see that the right-hand side of the above identity is 
symmetric with respect to exchanging the indices~$\alpha$ and~$\delta$. 
Therefore, the 4-tensor field $\hat \nabla \hat c$ is symmetric. The axiom A2 is verified.  The proposition is proved.
\end{proof}

As in~\cite{Du96}, we call the pre-Frobenius manifold $(B,\, \cdot,\, e,\, \langle \,,\, \rangle^\wedge)$
 the {\it Legendre-type transformation of~$M$ given by $\frac{\p}{\p v^\kappa}$} 
 and denote this pre-Frobenius manifold for short by $S_\kappa(M)$. (Of course, the Legendre-type transformation 
 of $S_\kappa(M)$ given by $\frac{\p}{\p \hat v^\iota}$ is the pre-Frobenius manifold~$M$.)
We note that the following identity obviously holds:
\beq
\tilde c_{\alpha\beta}^\gamma = \hat c_{\alpha\beta}^\gamma, \quad \alpha,\beta,\gamma=1,\dots,n,
\eeq
where $\hat c_{\alpha\beta}^\gamma:=\sum_{\rho=1}^n \hat c_{\alpha\beta\rho} \eta^{\rho\gamma}$. 

\begin{prop}\label{propprefm510-2}
The holomorphic functions $\hat \theta_{\alpha,m}=\hat \theta_{\alpha,m}(\hat {\bf v})$, defined by
\beq\label{caliSkappa}
\hat \theta_{\alpha,m}(\hat {\bf v}) := \Omega^{M,[0]}_{\alpha, m; \kappa, 0} ({\bf v}(\hat {\bf v})), \quad 
\alpha=1,\dots,n,\, m\geq0,
\eeq
give a calibration for the pre-Frobenius manifold $S_\kappa(M)$. 
\end{prop}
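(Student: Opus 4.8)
The plan is to verify directly the four defining conditions of a calibration on a pre-Frobenius manifold, that is, the analogues of~\eqref{theta000511}, \eqref{theta1c}, \eqref{orthogtheta511} and~\eqref{thetacali512} for $S_\kappa(M)$, with $\langle\,,\,\rangle$, $c^\gamma_{\alpha\beta}$ and the unity index $\iota$ replaced respectively by $\langle\,,\,\rangle^\wedge$, $\hat c^\gamma_{\alpha\beta}$ and $\kappa$ (recall from~\eqref{ee513} that $e=\frac{\p}{\p\hat v^\kappa}$, so $\kappa$ plays for $S_\kappa(M)$ the role that $\iota$ plays for $M$). The entire argument will hinge on a single clean identity that I would isolate first:
\beq\label{cleanrel}
\frac{\p\hat\theta_{\alpha,m}}{\p\hat v^\gamma}=\frac{\p\theta_{\alpha,m}}{\p v^\gamma},\qquad \alpha,\gamma=1,\dots,n,\ m\ge0,
\eeq
understood as an equality of holomorphic functions on $B$ after substituting ${\bf v}={\bf v}(\hat{\bf v})$ on the right. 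Once~\eqref{cleanrel} is established, all four conditions follow almost mechanically.

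To prove~\eqref{cleanrel}, I would first record two Jacobian facts. Differentiating $\hat v_\alpha=\Omega^{M,[0]}_{\alpha,0;\kappa,0}$ and using~\eqref{deriOmisc514} gives $\frac{\p\hat v^\lambda}{\p v^\rho}=c^\lambda_{\kappa\rho}$; hence the inverse Jacobian $A^\rho_\alpha:=\frac{\p v^\rho}{\p\hat v^\alpha}$ satisfies $\sum_\rho A^\rho_\alpha\, c^\lambda_{\kappa\rho}=\delta^\lambda_\alpha$. Next, applying the derivative formula~\eqref{derivative_Omega} to $\hat\theta_{\alpha,m}=\Omega^{M,[0]}_{\alpha,m;\kappa,0}$, and using $\theta_{\kappa,0}=v_\kappa$ together with the $\eta$-contraction $\sum_\sigma c^{\lambda\sigma}_\rho\,\eta_{\kappa\sigma}=c^\lambda_{\kappa\rho}$, yields $\frac{\p\hat\theta_{\alpha,m}}{\p v^\rho}=\sum_\lambda c^\lambda_{\kappa\rho}\,\frac{\p\theta_{\alpha,m}}{\p v^\lambda}$. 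Composing with the inverse Jacobian and invoking $\sum_\rho A^\rho_\alpha c^\lambda_{\kappa\rho}=\delta^\lambda_\alpha$ collapses the sum and delivers~\eqref{cleanrel}.

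With~\eqref{cleanrel} in hand I would dispatch the four conditions in turn. The condition $\hat\theta_{\alpha,0}=\hat v_\alpha$ is immediate from~\eqref{caliSkappa} and~\eqref{defhatv59}. For the calibration normalization, setting $\gamma=\kappa$ in~\eqref{cleanrel} and using $\frac{\p\theta_{\alpha,m+1}}{\p v^\kappa}=\Omega^{M,[0]}_{\alpha,m;\kappa,0}=\hat\theta_{\alpha,m}$ gives $\frac{\p\hat\theta_{\alpha,m+1}}{\p\hat v^\kappa}=\hat\theta_{\alpha,m}$ at once. For the orthogonality relation, summing~\eqref{cleanrel} against $z^m$ gives $\frac{\p\hat\theta_\alpha(\hat{\bf v};z)}{\p\hat v^\gamma}=\frac{\p\theta_\alpha({\bf v};z)}{\p v^\gamma}$; since by~\eqref{flatmetricnew2513} the metric $\langle\,,\,\rangle^\wedge$ has constant matrix $\eta_{\alpha\beta}$ in the coordinates $\hat{\bf v}$, the gradient pairing $\langle\hat\nabla\hat\theta_\alpha(\hat{\bf v};z),\hat\nabla\hat\theta_\beta(\hat{\bf v};-z)\rangle^\wedge$ is literally equal to $\langle\nabla\theta_\alpha({\bf v};z),\nabla\theta_\beta({\bf v};-z)\rangle$, which equals $\eta_{\alpha\beta}$ by~\eqref{orthogtheta511}. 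Finally, for the recursion I would differentiate~\eqref{cleanrel} a second time, rewrite $\frac{\p}{\p\hat v^\beta}=\sum_\sigma A^\sigma_\beta\frac{\p}{\p v^\sigma}$, apply the $M$-recursion~\eqref{theta1c}, and then re-use~\eqref{cleanrel} to convert $\frac{\p\theta_{\gamma,m}}{\p v^\lambda}$ back into $\frac{\p\hat\theta_{\gamma,m}}{\p\hat v^\lambda}$; it then remains to identify $\sum_\sigma A^\sigma_\beta c^\lambda_{\alpha\sigma}$ with $\hat c^\lambda_{\alpha\beta}$, which follows by combining~\eqref{tildec513} with the commutativity (symmetry in the lower indices) of the structure constants.

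I expect the only genuinely delicate step to be the derivation of~\eqref{cleanrel}: the point is that the $\langle\,,\,\rangle^\wedge$-gradient of $\hat\theta_{\alpha,m}$ with respect to $\hat{\bf v}$ has exactly the same components as the $\langle\,,\,\rangle$-gradient of $\theta_{\alpha,m}$ with respect to ${\bf v}$, and making this precise requires careful bookkeeping of the index raising/lowering by $\eta$ and of the Jacobian of the nonlinear change of variables ${\bf v}\mapsto\hat{\bf v}$, for which~\eqref{deriOmisc514} and~\eqref{derivative_Omega} are precisely the right inputs. After that, the recursion is the only place where the specific shape of $\hat c^\gamma_{\alpha\beta}$ is needed, and its verification is a routine index match via~\eqref{tildec513}.
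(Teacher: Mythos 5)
Your proposal is correct and follows essentially the same route as the paper: both isolate the key identity $\p\hat\theta_{\alpha,m}/\p\hat v^\gamma=\p\theta_{\alpha,m}/\p v^\gamma$ (the paper's~\eqref{hattt517}, proved exactly as you do via~\eqref{derivative_Omega} and the Jacobian relation~\eqref{1jacobianvhatv513}) and then deduce the four calibration conditions from it, with the recursion~\eqref{theta1c} handled by the same chain-rule computation and the identification $\sum_\sigma\frac{\p v^\sigma}{\p\hat v^\beta}c^\lambda_{\sigma\gamma}=\hat c^\lambda_{\beta\gamma}$ from~\eqref{tildec513}. The only difference is cosmetic: your recursion step re-uses the key identity directly rather than passing back through $\Omega^{M,[0]}_{\gamma,m;\kappa,0}$ as the paper does.
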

\begin{proof}
Let us first prove the following identity, which was known in~\cite{DLZ, DZ-norm}:
\beq\label{hattt517}
\frac{\p \hat\theta_{\gamma,m}}{\p \hat v^\beta} = \frac{\p \theta_{\gamma,m}}{\p v^\beta}, \quad \forall\,\beta,\gamma=1,\dots,n,\, \forall\,m\geq0.
\eeq
Using \eqref{caliSkappa}, \eqref{derivative_Omega}, \eqref{1jacobianvhatv513}, we find that the left-hand side of~\eqref{hattt517} is equal to
\beq
\frac{\p \Omega^{M, [0]}_{\gamma,p; \kappa, 0}}{\p \hat v^\beta}
= \sum_{\rho=1}^n \frac{\p v^\rho}{\p \hat v^{\beta}}
\frac{\p \Omega^{M, [0]}_{\gamma, m; \kappa, 0}}{\p v^\rho} 
= \sum_{\rho, \phi=1}^n \frac{\p v^\rho}{\p \hat v^{\beta}}
\frac{\p \theta_{\gamma, m}}{\p v^\phi} c^{\phi}_{\rho\kappa} 
= \sum_{\rho, \phi=1}^n \frac{\p v^\rho}{\p \hat v^{\beta}}
\frac{\p \theta_{\gamma, m}}{\p v^\phi} \frac{\p \hat v^{\phi}}{\p v^\rho},
\eeq
which equals the right-hand side of~\eqref{hattt517}.

We proceed to the verifications of \eqref{theta000511}--\eqref{thetacali512} for $S_\kappa(M)$.
When $m=0$, we have $\hat \theta_{\alpha,0}(\hat {\bf v})=\hat v_\alpha$.  
For $m\geq0$, we have
\begin{align}
&  \frac{\p^2 \hat \theta_{\gamma,m+1}}{ \p \hat v^\alpha \p \hat v^\beta}
= \frac{\p}{\p \hat v^\alpha} \biggl(\frac{\p \theta_{\gamma,m+1}}{\p v^\beta} \biggr)  
= \sum_{\sigma=1}^n
\frac{\p^2 \theta_{\gamma,m+1}}{\p v^\beta \p v^\sigma} \frac{\p v^\sigma}{\p \hat v^\alpha} 
= \sum_{\sigma,\rho=1}^n c_{\beta\sigma}^\rho 
\frac{\p \theta_{\gamma,m}}{\p v^\rho} \frac{\p v^\sigma}{\p \hat v^\alpha} \nn\\
&= \sum_{\rho, \sigma, \phi, \chi=1}^n \frac{\p v^\sigma}{\p \hat v^\alpha} c^\rho_{\sigma\beta}    
\frac{\p \theta_{\gamma,m}}{\p v^\chi} \frac{\p \hat v^\chi}{\p v^\phi} \frac{\p v^\phi}{\p \hat v^\rho} 
= \sum_{\rho, \sigma, \phi, \chi=1}^n\frac{\p v^\sigma}{\p \hat v^\alpha} c^\rho_{\sigma\beta}   
\frac{\p \theta_{\gamma,m}}{\p v^\chi} c_{\phi \kappa}^\chi \frac{\p v^\phi}{\p \hat v^\rho} \nn\\
&= \sum_{\rho, \sigma, \phi=1}^n \frac{\p v^\sigma}{\p \hat v^\alpha} c^\rho_{\sigma\beta} 
 \frac{\p \Omega^{M, [0]}_{\gamma,m; \kappa, 0}}{\p v^\phi} \frac{\p v^\phi}{\p \hat v^\rho} 
 = \sum_{\rho=1}^n 
 \hat c^\rho_{\alpha\beta} \frac{\p \Omega^{M, [0]}_{\gamma,m; \kappa, 0}}{\p \hat v^\rho} = 
  \sum_{\rho=1}^n \hat c_{\alpha\beta}^\rho \frac{\p\hat \theta_{\gamma,m}}{\p \hat v^\rho}. \nn
\end{align}
Here, we have used \eqref{hattt517}, \eqref{caliSkappa} and \eqref{derivative_Omega}. 
This verifies~$\eqref{theta1c}$ for $S_\kappa(M)$. 
By using~\eqref{hattt517},  \eqref{orthogtheta511} and~\eqref{flatmetricnew2513}, we immediately get 
\beq
\bigl\langle \hat \nabla \hat \theta_\alpha(\hat {\bf v};z),  
\hat \nabla \hat \theta_\beta(\hat {\bf v};-z) \bigr\rangle^\wedge = \eta_{\alpha\beta}, \qquad \alpha,\beta=1,\dots,n,
\eeq 
where 
\beq \label{defhatthetaz}
\hat \theta_\alpha(\hat{\bf v};z):=\sum_{m\ge0} \hat \theta_{\alpha,m}(\hat {\bf v}) z^m.
\eeq
This verifies~\eqref{orthogtheta511} for $S_\kappa(M)$. Finally, 
\begin{align}
\frac{\p \hat \theta_{\alpha,m+1}}{\p \hat v^\kappa} = \frac{\p \theta_{\alpha,m+1}}{\p v^\kappa} 
= \Omega^{M, [0]}_{\alpha,m;\kappa,0} = \hat\theta_{\alpha,m}, \quad \alpha=1,\dots,n, \,m\geq0,
\end{align}
which verifies~\eqref{thetacali512} for $S_\kappa(M)$.
The proposition is proved.
\end{proof}

The above Proposition~\ref{propprefm510-2} tells the following: once the pre-Frobenius manifold $M$ is calibrated, 
under the assumption that $\frac{\p}{\p v^\kappa}\cdot$ is invertible on~$B$, 
the pre-Frobenius manifold $S_\kappa(M)$, i.e., the quadruple $(B,\, \cdot,\, e,\, \langle \,,\, \rangle^\wedge)$, can 
be calibrated by using the calibration of~$M$. 

From now on we assume that 
$\frac{\p}{\p v^\kappa}\cdot$ is invertible on $B$ for $\phi(B)$ being sufficiently small.
By Proposition~\ref{propprefm510-2}, the holomorphic functions $\hat\theta_{\alpha,m}(\hat {\bf v})$ 
(see~\eqref{caliSkappa}) give a calibration on $S_\kappa(M)$. 
Denote by $\Omega_{\alpha,m_1;\beta,m_2}^{S_\kappa(M), [0]}$ the two-point correlations functions of 
$S_\kappa(M)$ corresponding to this calibration. By definition, we know that 
\beq\label{hatdeftwopoint514}
\sum_{m_1,m_2\ge0} \Omega_{\alpha,m_1;\beta,m_2}^{S_\kappa(M), [0]}(\hat {\bf v}) z^{m_1} w^{m_2} = 
\frac{\bigl\langle \hat\nabla \hat\theta_\alpha(\hat {\bf v};z), \hat \nabla \hat \theta_\beta(\hat {\bf v};w) \bigr\rangle^\wedge-\eta_{\alpha\beta}}{z+w} .
\eeq
Then by using~\eqref{genuszerotwopointfor514} and~\eqref{hattt517}, we arrive at the following corollary. 
\begin{cor} \label{simplecor514}
We have the identities 
\beq\label{identitiestwopointOmega}
\Omega_{\alpha,m_1;\beta,m_2}^{S_\kappa(M), [0]}(\hat {\bf v}) 
= \Omega_{\alpha,m_1;\beta,m_2}^{M, [0]}({\bf v}(\hat {\bf v})), \quad \forall\,\alpha,\beta=1,\dots,n,\, \forall\,m_1,m_2\geq0.
\eeq
\end{cor}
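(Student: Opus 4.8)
The plan is to read off both sides of~\eqref{identitiestwopointOmega} from the universal closed formula~\eqref{genuszerotwopointfor514}, which expresses the genus zero two-point correlation functions of \emph{any} calibrated pre-Frobenius manifold entirely in terms of the $v$-derivatives of its calibration and the inverse of its invariant flat metric. That formula is derived from the generating-series definition~\eqref{deftwopoint511} using only the orthogonality relation~\eqref{orthogtheta511}. Since Proposition~\ref{propprefm510-2} equips $S_\kappa(M)$ with the calibration $\hat\theta_{\alpha,m}(\hat{\bf v})$ and verifies~\eqref{orthogtheta511} for it, the very same derivation applies verbatim to $S_\kappa(M)$ and produces
\begin{align}
\Omega_{\alpha,m_1;\beta,m_2}^{S_\kappa(M),[0]}(\hat{\bf v})
= \sum_{m=0}^{m_2}(-1)^m\sum_{\rho,\sigma=1}^n
\frac{\p\hat\theta_{\alpha,m_1+m+1}}{\p\hat v^\rho}\,\eta^{\rho\sigma}\,
\frac{\p\hat\theta_{\beta,m_2-m}}{\p\hat v^\sigma}. \nn
\end{align}

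The one point I would stress here is what makes the indices line up: by~\eqref{flatmetricnew2513} the invariant flat metric of $S_\kappa(M)$ written in the coordinates $\hat{\bf v}$ is again the constant matrix $(\eta_{\alpha\beta})$, so the metric inverse appearing in the formula is literally the same $\eta^{\rho\sigma}$ used for~$M$, not a transformed one. With this secured, I would invoke the calibration-comparison identity~\eqref{hattt517}, namely $\frac{\p\hat\theta_{\gamma,m}}{\p\hat v^\beta}=\frac{\p\theta_{\gamma,m}}{\p v^\beta}$, to replace every $\hat v$-derivative of a hatted theta by the corresponding $v$-derivative of the unhatted theta. Substituting these replacements turns the displayed expression termwise into the right-hand side of~\eqref{genuszerotwopointfor514} evaluated at ${\bf v}={\bf v}(\hat{\bf v})$, which is exactly $\Omega_{\alpha,m_1;\beta,m_2}^{M,[0]}({\bf v}(\hat{\bf v}))$, giving~\eqref{identitiestwopointOmega}.

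There is no genuine obstacle in this argument; it is a direct consequence of the two facts just cited, and indeed the corollary is essentially a bookkeeping statement. The only place demanding a moment's attention is the transfer of formula~\eqref{genuszerotwopointfor514} to $S_\kappa(M)$ with the \emph{same} metric inverse: were the metric of $S_\kappa(M)$ not constantly equal to $\eta$ in the $\hat{\bf v}$ coordinates, one would have to track how $\eta$ transforms under ${\bf v}\mapsto\hat{\bf v}$ and the termwise matching would break. Fortunately~\eqref{flatmetricnew2513} rules this out, and the identity then follows by inspection.
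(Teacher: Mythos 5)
Your proposal is correct and follows essentially the same route as the paper, which derives the corollary precisely by combining the closed formula~\eqref{genuszerotwopointfor514} (applied to $S_\kappa(M)$ with its calibration and the constant metric $\eta$ from~\eqref{flatmetricnew2513}) with the identity~\eqref{hattt517}. Your extra remark about why the same $\eta^{\rho\sigma}$ appears on both sides is a sensible clarification but does not change the argument.
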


Let $F^{S_\kappa(M)}(\hat {\bf v})$ denote the potential (cf.~\eqref{FOmega512}) of the pre-Frobenius 
manifold $S_\kappa(M)$ calibrated by using~$\hat\theta_\alpha(\hat {\bf v};z)$. 
Then from Corollary~\ref{simplecor514} we know that the above formulation agrees with 
the original set of equations~\eqref{vvhatcoord}--\eqref{FF56} given by Dubrovin. 

We proceed to the case of a Frobenius manifold 
$(M, \, \cdot, \, e, \, \langle\,,\,\rangle, \, E)$. 
Denote by $\mu, R$ the monodromy data at $z=0$ of this Frobenius manifold. 
As in Section~\ref{section2}, we assume that $\frac{\p}{\p v^\iota}=e$ and \eqref{E517}.  
We know from Proposition~\ref{propprefm510} that $(\cdot,\, e,\, \langle\,,\,\rangle^{\wedge})$ 
gives a pre-Frobenius structure on~$B$ with the flat coordinates $\hat v^1,\dots,\hat v^n$.
Write the Euler vector field~$E$ in this coordinate system as follows:
$$
E =: \sum_{\beta=1}^n \hat E^\beta(\hat {\bf v}) \frac{\p}{\p \hat v^\beta}.
$$
Then by a straightforward calculation using~\eqref{EparticularO517} we find 
\begin{align}
& \hat E^\beta(\hat {\bf v}) = \biggl(1-\frac{\hat D}2- \mu_\beta\biggr) \hat v^\beta + \hat r^\beta, \quad \beta=1,\dots,n, \label{skME0102-2}
\end{align}
where $\hat D$ and $\hat r^\beta$ are contants given by 
\beq\label{skME0102-3}
\hat D := -2 \mu_\kappa, \qquad \hat r^\beta := (R_1)^{\beta}_{\kappa}, \quad \beta=1,\dots,n.
\eeq
It follows immediately that $\hat \nabla \hat \nabla E=0$, 
as well as that 
\begin{align}
&\sum_{\gamma=1}^n \frac{\p \hat E^\gamma}{\p \hat v^\alpha} \hat \eta_{\gamma\beta} + 
\sum_{\gamma=1}^n \frac{\p \hat E^\gamma}{\p \hat v^\beta} \hat \eta_{\gamma\alpha}
= \biggl(1-\frac{\hat D}2 - \mu_\alpha\biggr) \eta_{\alpha\beta} + 
\biggl(1-\frac{\hat D}2- \mu_\beta\biggr) \eta_{\beta\alpha} = \bigl(2-\hat D\bigr) \eta_{\alpha\beta}.
\end{align}
It is also not difficult to verify that 
\begin{align}
&\sum_{\gamma=1}^n\biggl(\hat E^\gamma \frac{\p \hat c_{\alpha\beta}^\sigma}{\p \hat v^\gamma} + \frac{\p \hat E^\gamma}{\p \hat v^\beta} \hat c^\sigma_{\gamma\alpha} 
+ \frac{\p \hat E^\gamma}{\p \hat v^\alpha} \hat c^\sigma_{\gamma\beta} - \frac{\p \hat E^\sigma}{\p \hat v^\gamma} \hat c^\gamma_{\alpha\beta} \biggr)
= \hat c_{\alpha\beta}^\sigma.
\end{align}
These show that the Euler vector field~$E$ for~$M$ can also be served as the 
Euler vector field for $S_\kappa(M)$ to make it a 
Frobenius manifold of charge~$\hat D=-2\mu_\kappa$. 

Hence we arrive at the following theorem. 

\smallskip

\noindent {\bf Theorem A} (\cite{Du96})
{\it The quintuple  
$(B, \cdot,\, e,\, \langle\,,\,\rangle^{\wedge},\, E)$
is a Frobenius manifold of charge 
$-2\mu_\kappa$.}

\smallskip

As in~\cite{Du96} we call  
$(B,\, \cdot,\, e,\, \langle\,,\,\rangle^{\wedge},\, E)$ 
the {\it Legendre-type transformation of~$M$ given by $\frac{\p}{\p v^\kappa}$}, and 
 denote this Frobenius manifold for short as $S_\kappa(M)$. 
 (Again the Legendre-type transformation of $S_\kappa(M)$ given by $\frac{\p}{\p \hat v^\iota}$ is the 
 Frobenius manifold~$M$.)
 Denote by~$\I\subset \{1,\dots,n\}$ 
 the index set such that for each $\kappa\in \I$, $\frac{\p}{\p v^\kappa}\cdot$ is invertible on some open set. We call the collection of 
 Frobenius manifolds $S_\kappa(M)$, $\kappa\in \I$, the {\it cluster of Frobenius 
 manifolds}.  

\section{Monodromy data for $S_\kappa(M)$}\label{sectionmono}
In this section, 
we study the relations among the monodromy data of the Frobenius manifold cluster  
$S_\kappa(M)$, $\kappa\in \I$.

\subsection{Monodromy data at $z=0$}
Recall that 
the Frobenius manifold $S_\kappa(M)$ shares with~$M$ the same unity vector field~$e$, the same Euler vector field~$E$ and the same 
multiplication ``$\,\cdot\,$". 
The charge $\hat D$ of the Frobenius manifold $S_\kappa(M)$ is given by $\hat D=-2\mu_\kappa$.
Recall also from the previous section that 
in the coordinate system $(\hat v^1,\dots,\hat v^n)$, we have $e=\frac{\p}{\p \hat v^\kappa}$.

\begin{theorem}\label{thm2519}
The monodromy data $\mu,R$ at $z=0$ for the Frobenius manifold~$M$ can be 
served as the monodromy data at $z=0$ for~$S_\kappa(M)$.
Moreover, the holomorphic functions $\hat \theta_{\alpha,m}(\hat {\bf v})$ defined by~\eqref{caliSkappa} 
give a calibration on~$S_\kappa(M)$ associated to $\mu,R$. 
\end{theorem}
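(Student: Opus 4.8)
The plan is to reduce both assertions to verifying the single differential-difference relation~\eqref{theta2c} for the candidate calibration $\hat\theta_{\alpha,m}$ of $S_\kappa(M)$, keeping the matrices $\mu$ and $R$ exactly as for $M$. By Proposition~\ref{propprefm510-2} the functions $\hat\theta_{\alpha,m}(\hat{\bf v})$ already satisfy~\eqref{theta000511}--\eqref{thetacali512}, so they form a calibration in the weak sense; by the definition given in the paragraph following~\eqref{theta2c}, upgrading this to a calibration \emph{associated to} $(\mu,R)$ requires exactly~\eqref{theta2c}. Once that is in hand, the deformed flat coordinates $(\hat\theta_1,\dots,\hat\theta_n)\,z^\mu z^R$ are $\widetilde\nabla$-flat for $S_\kappa(M)$ and their gradients assemble into a fundamental solution $\hat\Theta(\hat{\bf v};z)\,z^\mu z^R$ of the differential system~\eqref{pdedub517}--\eqref{odedub517} attached to $S_\kappa(M)$; this is precisely the statement that $\mu,R$ serve as the monodromy data at $z=0$ of $S_\kappa(M)$. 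Thus both claims collapse to~\eqref{theta2c}.

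First I would confirm that the matrix $\mu$ itself transfers correctly, i.e.\ that the diagonal operator $\hat{\mathcal V}=\frac{2-\hat D}2\,\mathrm{id}-\hat\nabla E$ governing the Fuchsian behavior at $z=0$ for $S_\kappa(M)$ has the same entries $\mu_1,\dots,\mu_n$. Using the explicit form~\eqref{skME0102-2} of the Euler field in the coordinates $\hat{\bf v}$, together with $\hat D=-2\mu_\kappa$ from~\eqref{skME0102-3}, one reads off $\hat\nabla E=\mathrm{diag}(1-\tfrac{\hat D}2-\mu_\beta)$, whence $\hat{\mathcal V}^\beta_\beta=\tfrac{2-\hat D}2-\bigl(1-\tfrac{\hat D}2-\mu_\beta\bigr)=\mu_\beta$. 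In particular the distinguished unity index becomes $\kappa$, with $\hat\mu_\kappa=-\hat D/2=\mu_\kappa$ as required, so the diagonal matrix $\mu$ is genuinely shared.

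The heart of the argument is the already-proved key identity~\eqref{hattt517}, which states $\frac{\p\hat\theta_{\gamma,m}}{\p\hat v^\beta}=\frac{\p\theta_{\gamma,m}}{\p v^\beta}$ as functions on $B$. I would write down~\eqref{theta2c} for $S_\kappa(M)$ with the matrices $\mu$ and $R$ inserted, and then replace every $\hat v$-gradient of $\hat\theta$ by the corresponding $v$-gradient of $\theta$ via~\eqref{hattt517}. The crucial point is that the Euler vector field $E$ is one and the same vector field on $B$ for $M$ and for $S_\kappa(M)$ (Theorem~A), and that the action of a vector field on a scalar function is coordinate-free; hence $E\bigl(\frac{\p\hat\theta_{\alpha,m}}{\p\hat v^\beta}\bigr)=E\bigl(\frac{\p\theta_{\alpha,m}}{\p v^\beta}\bigr)$. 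After these substitutions the proposed relation for $S_\kappa(M)$ becomes term-by-term the identity~\eqref{theta2c} for $M$, which holds because $\theta_{\alpha,m}$ form a calibration of $M$ associated to $(\mu,R)$. This establishes~\eqref{theta2c} for $S_\kappa(M)$ and finishes the verification.

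The step I expect to require the most care is not any computation but the conceptual check underlying the reduction: ensuring that the two sides of~\eqref{theta2c} are genuinely coordinate-free objects, so that the equality of $v$- and $\hat v$-gradients in~\eqref{hattt517} transports the whole equation. Concretely, one must note that $E$ acts identically on equal functions irrespective of the chart in which those functions were originally constructed, and that the coefficients $m+\mu_\alpha+\mu_\beta$ and $(R_k)^\gamma_\alpha$ are plain constants that do not transform under the change of coordinates ${\bf v}\leftrightarrow\hat{\bf v}$. Once this is granted the verification is immediate, and the only genuine extra input beyond Proposition~\ref{propprefm510-2} is the spectral computation identifying $\hat\mu$ with $\mu$.
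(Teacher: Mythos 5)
Your proposal is correct, but it traverses the argument in the opposite order from the paper, and the comparison is worth recording. The paper first constructs the matrix $\hat{\mathcal{Y}}_0=\hat\Theta(\hat{\bf v};z)z^\mu z^R$, observes via~\eqref{hattt517} that it coincides with $\mathcal{Y}_0({\bf v}(\hat{\bf v});z)$, and verifies that it solves the system \eqref{pdedub517}--\eqref{odedub517} for $S_\kappa(M)$ by the computation $\hat{\mathcal{U}}(\hat{\bf v})=\mathcal{U}({\bf v}(\hat{\bf v}))$; this yields the monodromy statement directly, and the calibration relation~\eqref{theta2c} for $\hat\theta_{\alpha,m}$ is then \emph{read off} from the resulting $z$-ODE \eqref{hatydz519} together with~\eqref{YhatmuR519}. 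You instead prove~\eqref{theta2c} for $\hat\theta_{\alpha,m}$ first, by the scalar transport argument: the functions $\p\hat\theta_{\alpha,m}/\p\hat v^\beta$ and $\p\theta_{\alpha,m}/\p v^\beta$ are literally equal on $B$ by~\eqref{hattt517}, the Euler field is one and the same vector field acting in a chart-independent way, and the constants $\mu_\beta$, $(R_k)^\gamma_\alpha$ are unchanged (your check that $\hat{\mathcal V}=\mathrm{diag}(\mu_\beta)$ via \eqref{skME0102-2}--\eqref{skME0102-3} is exactly what is needed here, and parallels the paper's remark that $\hat D=-2\mu_\kappa$). You then recover the monodromy statement by assembling the calibration into $\hat\Theta z^\mu z^R$, using the standard equivalence between the scalar conditions \eqref{theta000511}--\eqref{thetacali512}, \eqref{theta1c}, \eqref{theta2c} and the matrix system — the same equivalence Section~2 invokes in the other direction, with nondegeneracy supplied by $\hat\Theta(\cdot;0)=I$ and nilpotency of~$R$. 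Both routes hinge on the identical two inputs (\eqref{hattt517} and the fact that $e$, $E$, and the multiplication are shared), so neither is stronger; yours keeps the computation entirely at the level of scalar identities and avoids the matrix manipulation of $\hat{\mathcal U}$, at the cost of having to state explicitly the converse passage from a calibration associated to $(\mu,R)$ back to a fundamental matrix solution of the required normal form. No gap.
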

\begin{proof}
We prove the first statement by constructing a fundamental matrix solution 
to \eqref{pdedub517}--\eqref{odedub517} for the Frobenius manifold~$S_\kappa(M)$. Indeed, define 
$\hat \Theta (\hat {\bf v};z)=(\hat \Theta^\alpha_\beta (\hat {\bf v};z))$ by 
\beq
\hat \Theta^\alpha_\beta (\hat {\bf v};z) 
:= \sum_{\gamma=1}^n \eta^{\alpha\gamma}\frac{\p \hat \theta_{\beta}(\hat {\bf v}; z)}{\p \hat v^\gamma}, \quad \alpha,\beta=1,\dots,n,
\eeq
where $\hat \theta_{\beta}(\hat {\bf v}; z)$ is defined in~\eqref{defhatthetaz}. 
It has the following properties:
\beq
\hat \Theta(\hat {\bf v};0) = I, \quad \eta^{-1} \hat \Theta(\hat {\bf v};-z)^T \eta \hat \Theta(\hat {\bf v};z)= I.
\eeq
From the identity~\eqref{hattt517} we see that 
\beq
\hat \Theta^\alpha_\beta (\hat {\bf v};z) = \Theta^\alpha_\beta ({\bf v}(\hat {\bf v});z), \quad \alpha,\beta=1,\dots,n.
\eeq
Put
\beq\label{YhatmuR519}
\hat{\mathcal{Y}}_0(\hat {\bf v};z) := \hat \Theta(\hat {\bf v};z) z^\mu z^R \qquad ({\rm clearly,}~\hat{\mathcal{Y}}_0(\hat {\bf v};z) = \mathcal{Y}_0({\bf v}(\hat {\bf v});z)).
\eeq
Using~\eqref{hatcc519}, noticing that 
 $\mathcal{Y}_0 = \Theta({\bf v};z) z^\mu z^R$
is a fundamental matrix solution to the isomonodromic system \eqref{pdedub517}--\eqref{odedub517} for the Frobenius manifold~$M$, and 
noticing    
\beq
\hat{\mathcal{U}}^\alpha_\beta(\hat {\bf v}) 
= \sum_{\rho=1}^n \hat E^\rho(\hat {\bf v}) \hat c^\alpha_{\rho\beta}(\hat {\bf v}) = 
\sum_{\rho,\sigma=1}^n E^\sigma ({\bf v}(\hat {\bf v})) \frac{\p \hat v^\rho}{\p v^\sigma} \hat c^\alpha_{\rho\beta}(\hat {\bf v}) 
=\sum_{\sigma=1}^n E^\sigma ({\bf v}(\hat {\bf v})) c^\alpha_{\sigma\beta}({\bf v}(\hat {\bf v}))  = 
\mathcal{U}^\alpha_\beta({\bf v}(\hat {\bf v})),
\eeq
we find that $\hat{\mathcal{Y}}_0$ satisfies 
\begin{align}
&\frac{\p \hat{\mathcal{Y}}_0}{\p \hat v^\alpha} = z \hat c_\alpha(\hat {\bf v}) \hat{\mathcal{Y}}_0, \quad \alpha=1,\dots,n,\label{hatydv519}\\
& \frac{d \hat{\mathcal{Y}}_0}{dz} = \Bigl(\hat{\mathcal{U}}({\bf v})+\frac{\mu}{z}\Bigr) \hat{\mathcal{Y}}_0. \label{hatydz519}
\end{align}
This is the isomonodromic system for the Frobenius manifold~$S_\kappa(M)$. 
Obviously, $\hat{\mathcal{Y}}_0$ is non-degenerate. 
We conclude that $\hat{\mathcal{Y}}_0$ a fundamental matrix solution to \eqref{hatydv519}--\eqref{hatydz519} and that 
$(\mu, R)$ can be chosen as 
the monodromy data at $z=0$ for the Frobenius manifold~$S_\kappa(M)$.

To show the second statement, we note that from Proposition~\ref{propprefm510-2} we already know that 
the holomorphic functions $\hat \theta_{\alpha,m}(\hat {\bf v})$ form a calibration for 
$S_\kappa(M)$ as a pre-Frobenius manifold. 
Now using~\eqref{hatydz519} and~\eqref{YhatmuR519} we know that 
$\hat \theta_{\alpha,m}(\hat {\bf v})$ also satisfy 
\beq
\hat E \biggl(\frac{\p \hat \theta_{\alpha,m}(\hat {\bf v})} {\p \hat v^\beta} \biggr) = 
(m+\mu_\alpha+\mu_\beta) \frac{\p \hat \theta_{\alpha,m}(\hat {\bf v})} {\p \hat v^\beta} + 
\sum_{\gamma=1}^n \sum_{k=1}^m (R_k)^\gamma_\alpha \frac{\p \hat \theta_{\gamma,m-k}(\hat {\bf v})} {\p \hat v^\beta},
\eeq
where $\alpha=1,\dots,n$, $m\ge0$.
So $\hat \theta_{\alpha,m}(\hat {\bf v})$ form a calibration for~$S_\kappa(M)$ as a Frobenius manifold. 
The theorem is proved.
\end{proof}

We note that 
the statement that $M$ and $S_\kappa(M)$ share the same spectrum $\mu$ was given in~\cite{Du96}.

\subsection{Semisimplicity and monodromy data at $z=\infty$}
In this subsection, we assume that the Frobenius manifold $(M,\, \cdot,\, e,\, \langle \,,\, \rangle,\, E)$ is semisimple, 
and study the monodromy data of $S_\kappa(M)$ at $z=\infty$.

Let $(B, \phi)$ be a flat coordinate chart with $\phi(B)$ being a sufficiently small ball and with the flat coordinates taken as before, such 
that $B\subset M^0$ (so we can also take canonical coordinates $u_1,\dots,u_n$) 
and that $\frac{\p}{\p v^\kappa}\cdot$ is invertible. 
Since $M$ and its Legendre-type transformation $S_\kappa(M)$ have the same multiplication, 
the canonical coordinates $(u_1,\dots,u_n)$ also form the canonical coordinates 
for $S_\kappa(M)$. 
Using 
\beq
e= \frac{\p}{\p v^\iota} = \sum_{i=1}^n \psi_{i\iota}({\bf u}) \frac1{\sqrt{\eta_{ii}({\bf u})}} \frac{\p}{\p u_i} 
\eeq
and using~\eqref{ecanonical227}, we have
\beq
\psi_{i\iota}({\bf u}) = \sqrt{\eta_{ii}({\bf u})}, \quad i=1,\dots,n.
\eeq

Denote 
\beq
\hat \eta_{ij}({\bf u}) = \Bigl\langle \frac{\p}{\p u_i}, \frac{\p}{\p u_j} \Bigr\rangle^\wedge, \quad i,j=1,\dots,n.
\eeq
By definition we know that  
\beq
\hat \eta_{ii}({\bf u}) = \Bigl\langle \frac{\p}{\p v^\kappa} \cdot \frac{\p}{\p u_i}, \frac{\p}{\p v^\kappa} \cdot \frac{\p}{\p u_i} \Bigr\rangle 
= \psi_{i\kappa}({\bf u})^2, \quad i=1,\dots, n. 
\eeq
Let $\hat f_1,\dots,\hat f_n$ be the orthonormal frame with respect to $\langle \,, \rangle^\wedge$, given by 
\beq\label{hatfinormalize} 
\hat f_i = \frac1{\psi_{i\kappa}({\bf u})} \frac{\p}{\p u_i}, \quad i=1,\dots,n.
\eeq
Here we note that like in~\eqref{finormalize} one can again arbitrarily choose the signs for the square roots of $\hat \eta_{ii}({\bf u})$, 
but in~\eqref{hatfinormalize} we actually make the particular choice. This means that, once a choice of 
 signs for $\sqrt{\eta_{ii}({\bf u})}$ on~$M$ is made as we have done (cf.~\eqref{finormalize}), there is an according choice 
 of signs for $\sqrt{\hat \eta_{ii}({\bf u})}$ on~$S_{\kappa}(M)$.
 
Let 
$\hat \Psi({\bf u})=(\hat \psi_{i\alpha}({\bf u}))$ be the 
transition matrix from the orthonormal frame $(\hat f_1,\dots, \hat f_n)$ to the frame $(\frac{\p}{\p \hat v^1},\dots,\frac{\p}{\p \hat v^n})$, i.e.,
\beq\label{phatvalpha}
\frac{\p}{\p \hat v^\alpha} = \sum_{i=1}^n \hat f_i  \hat \psi_{i\alpha}({\bf u}) = 
\sum_{i=1}^n  \frac{\hat \psi_{i\alpha}({\bf u})}{\psi_{i\kappa}({\bf u})} \frac{\p}{\p u_i}, \quad \alpha=1,\dots,n.
\eeq
Using \eqref{ee513}, \eqref{ecanonical227} and~\eqref{phatvalpha}, we have  
\beq
\hat \psi_{i\kappa}({\bf u}) = \psi_{i\kappa}({\bf u}), \quad i=1,\dots,n.
\eeq

The following lemma is important. 
\begin{lemma}\label{psipsihatlemma} We have
\beq
\Psi({\bf u})  = \hat \Psi({\bf u}) .
\eeq
\end{lemma}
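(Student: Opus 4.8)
The plan is to prove the identity entrywise, i.e.\ to show $\psi_{i\alpha}({\bf u})=\hat\psi_{i\alpha}({\bf u})$ for all $i,\alpha$, by expanding both flat coordinate frames in the single common canonical frame $\{\frac{\p}{\p u_i}\}$ and comparing coefficients. The crucial structural input is that $M$ and $S_\kappa(M)$ share the same multiplication, so that the flat frame $\{\frac{\p}{\p \hat v^\alpha}\}$ is tied to $\{\frac{\p}{\p v^\alpha}\}$ through multiplication by $\frac{\p}{\p v^\kappa}$.

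First I would record the two frame expansions in canonical coordinates. From the definition of $\Psi$ together with $f_i=\frac{1}{\sqrt{\eta_{ii}({\bf u})}}\frac{\p}{\p u_i}$ (see~\eqref{finormalize}) and $\psi_{i\iota}({\bf u})=\sqrt{\eta_{ii}({\bf u})}$, one gets $\frac{\p}{\p v^\alpha}=\sum_{i=1}^n \frac{\psi_{i\alpha}({\bf u})}{\psi_{i\iota}({\bf u})}\frac{\p}{\p u_i}$; and from~\eqref{phatvalpha} one already has $\frac{\p}{\p \hat v^\alpha}=\sum_{i=1}^n \frac{\hat\psi_{i\alpha}({\bf u})}{\psi_{i\kappa}({\bf u})}\frac{\p}{\p u_i}$. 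Next I would rewrite Lemma~\ref{anotherusefullemma} in the upper-index flat coordinates: since $\frac{\p}{\p v_\alpha}=\sum_\beta \eta^{\alpha\beta}\frac{\p}{\p v^\beta}$ and likewise for $\hat v$, the invertibility of $\eta$ turns the identity $\frac{\p}{\p v_\alpha}=\frac{\p}{\p v^\kappa}\cdot\frac{\p}{\p \hat v_\alpha}$ into $\frac{\p}{\p v^\alpha}=\frac{\p}{\p v^\kappa}\cdot\frac{\p}{\p \hat v^\alpha}$ for every $\alpha$.

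The key computation is then to evaluate this relation in canonical coordinates. Since $\frac{\p}{\p v^\kappa}=\sum_{i=1}^n \frac{\psi_{i\kappa}({\bf u})}{\psi_{i\iota}({\bf u})}\frac{\p}{\p u_i}$ and $\frac{\p}{\p u_i}\cdot\frac{\p}{\p u_j}=\delta_{ij}\frac{\p}{\p u_j}$ by~\eqref{mulitiuiuj}, the operator $\frac{\p}{\p v^\kappa}\cdot$ acts diagonally on $\frac{\p}{\p u_i}$ with eigenvalue $\psi_{i\kappa}({\bf u})/\psi_{i\iota}({\bf u})$. Applying it to the expansion of $\frac{\p}{\p \hat v^\alpha}$ produces $\sum_{i=1}^n \frac{\hat\psi_{i\alpha}({\bf u})}{\psi_{i\iota}({\bf u})}\frac{\p}{\p u_i}$, and equating this with the expansion of $\frac{\p}{\p v^\alpha}$ forces $\hat\psi_{i\alpha}({\bf u})=\psi_{i\alpha}({\bf u})$, because the $\frac{\p}{\p u_i}$ are linearly independent and the $\psi_{i\iota}({\bf u})$ are nonvanishing. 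This gives $\Psi({\bf u})=\hat\Psi({\bf u})$.

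I expect no serious obstacle here; the only points demanding care are the upper/lower index conversion of Lemma~\ref{anotherusefullemma} and the verification that the diagonal eigenvalue $\psi_{i\kappa}({\bf u})/\psi_{i\iota}({\bf u})$ of $\frac{\p}{\p v^\kappa}\cdot$ exactly cancels the factor $\psi_{i\kappa}({\bf u})$ in the denominator of~\eqref{phatvalpha}. This cancellation, which rests on the compatible normalization $\sqrt{\hat\eta_{ii}({\bf u})}=\psi_{i\kappa}({\bf u})$ fixed in~\eqref{hatfinormalize}, is precisely what makes the two transition matrices coincide on the nose rather than merely up to diagonal signs.
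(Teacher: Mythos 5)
Your proof is correct and rests on the same two ingredients as the paper's: the upper-index form of Lemma~\ref{anotherusefullemma}, i.e.\ $\frac{\p}{\p v^\alpha}=\frac{\p}{\p v^\kappa}\cdot\frac{\p}{\p \hat v^\alpha}$, and the normalization $\hat f_i=\frac{1}{\psi_{i\kappa}({\bf u})}\frac{\p}{\p u_i}$ fixed in~\eqref{hatfinormalize}. The only cosmetic difference is that the paper extracts $\hat\psi_{i\alpha}$ by pairing against $\hat f_i$ with $\langle\,,\,\rangle^\wedge$ and unwinding the definition~\eqref{defprod59}, whereas you compare coefficients in the linearly independent frame $\{\frac{\p}{\p u_i}\}$ after letting $\frac{\p}{\p v^\kappa}\cdot$ act diagonally; both are the same computation in different clothing.
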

\begin{proof}
We have 
\begin{align}
& \hat \psi_{i\alpha}({\bf u}) = \Bigl \langle \frac{\p}{\p \hat v^\alpha} , \hat f_i \Bigr\rangle^\wedge 
= \Bigl\langle \frac{\p}{\p v^\kappa} \cdot \frac{\p}{\p \hat v^\alpha} , \frac{\p}{\p v^\kappa} \cdot \hat f_i \Bigr\rangle 
 = \Bigl\langle \frac{\p }{\p v^\alpha} , \frac{1}{\psi_{i\iota}({\bf u})}\frac{\p}{\p u_i} \Bigr\rangle = \psi_{i\alpha}({\bf u}),
\end{align}
where for the third equality we have used Lemma~\ref{anotherusefullemma}. 
\end{proof}

It follows from \eqref{defUV0103}, Theorem~\ref{thm2519} and Lemma~\ref{psipsihatlemma} that 
\beq\label{VVequal314}
\hat V({\bf u}) = V({\bf u}). 
\eeq

\begin{theorem}\label{maintheorem}
We have the following identity:
\beq\label{YMSkMequal}
Y_{\rm formal} ({\bf u};z) = \hat Y_{\rm formal} ({\bf u};z). 
\eeq
Moreover, choosing the admissible line (cf.~\eqref{admissible32}) for $S_{\kappa}(M)$ 
the same as that for~$M$, the Stokes matrix and the central connection matrix of 
$S_{\kappa}(M)$  coincide with those of~$M$.
\end{theorem}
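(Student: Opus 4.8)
The plan is to exploit the fact that the whole apparatus producing the monodromy at $z=\infty$ is governed by the isomonodromic system alone, and that this system is literally identical for $M$ and $S_\kappa(M)$. First I would record that the two Frobenius manifolds share the canonical coordinates $u_1,\dots,u_n$, so the diagonal matrix $U({\bf u})={\rm diag}(u_1,\dots,u_n)$ is common to both; combined with $\hat V({\bf u})=V({\bf u})$ from~\eqref{VVequal314} and the observation that $V_i({\bf u})={\rm ad}_{U({\bf u})}^{-1}([E_i,V({\bf u})])$ depends only on $U$ and $V$, this shows that the entire system~\eqref{Yeq10102}--\eqref{Yeq20102} coincides for the two manifolds. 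Since the formal solution $Y_{\rm formal}({\bf u};z)=\Phi({\bf u};z)e^{zU({\bf u})}$ is uniquely determined by this system (with $\Phi_0=I$ and the orthogonality normalization $\Phi({\bf u};-z)^T\Phi({\bf u};z)=I$), its $S_\kappa(M)$-counterpart $\hat Y_{\rm formal}({\bf u};z)$ must equal it, which is precisely~\eqref{YMSkMequal}.

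Next I would propagate this coincidence to the sectorial data. The admissibility condition~\eqref{admissible32} involves only the differences $u_i-u_j$, which agree for the two manifolds, so any admissible line for $M$ is admissible for $S_\kappa(M)$; choosing it with the same orientation, as in the statement, makes the sectors $\Pi_{\rm right}$, $\Pi_{\rm left}$, $\Pi_+$, $\Pi_-$ coincide. The solutions $Y_{{\rm right}/{\rm left}}({\bf u};z)$ are characterized uniquely as the solutions of the common ODE~\eqref{Yeq20102} asymptotic to the common $Y_{\rm formal}$ in the respective sectors, so $\hat Y_{{\rm right}/{\rm left}}=Y_{{\rm right}/{\rm left}}$. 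Reading off the Stokes matrix from~\eqref{defiStokes}, i.e.\ from $Y_{\rm left}=Y_{\rm right}S$ on $\Pi_+$, then forces $\hat S=S$.

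For the central connection matrix I would show that the canonical solution attached to the Fuchsian point also agrees, namely $\hat Y_0({\bf u};z)=Y_0({\bf u};z)$. Here $Y_0=\Psi({\bf u})\mathcal{Y}_0({\bf v}({\bf u});z)$, and both inputs match those of $M$: Lemma~\ref{psipsihatlemma} gives $\hat\Psi({\bf u})=\Psi({\bf u})$, while~\eqref{YhatmuR519} gives $\hat{\mathcal{Y}}_0(\hat{\bf v};z)=\mathcal{Y}_0({\bf v}(\hat{\bf v});z)$. Multiplying, $\hat Y_0=Y_0$; combined with $\hat Y_{\rm right}=Y_{\rm right}$ from the previous step, the defining relation $Y_{\rm right}=Y_0C$ then yields $\hat C=C$.

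In essence the whole argument is a uniqueness argument, so I expect no genuinely hard analytic step to remain at this stage: the substance has already been packed into the earlier identities $\hat U=U$, $\hat V=V$, $\hat\Psi=\Psi$ and $\hat{\mathcal{Y}}_0=\mathcal{Y}_0$. The one point I would watch carefully is that Lemma~\ref{psipsihatlemma} delivers $\hat\Psi=\Psi$ exactly, not merely up to signs; this is exactly the role of the coherent choice of square-root signs for $\sqrt{\hat\eta_{ii}({\bf u})}$ made in~\eqref{hatfinormalize} to match those for $\sqrt{\eta_{ii}({\bf u})}$. Without this coordinated sign convention, $Y_0$, and hence the central connection matrix, would agree only up to conjugation by a diagonal sign matrix, so the sign bookkeeping is the only place where the stated equality could silently degrade.
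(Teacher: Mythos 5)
Your proposal is correct and follows essentially the same route as the paper: equality of the systems~\eqref{Yeq10102}--\eqref{Yeq20102} via $\hat V=V$ and uniqueness of the formal solution gives~\eqref{YMSkMequal}, and the sectorial/connection data then coincide by uniqueness together with $\hat\Psi=\Psi$ and $\hat{\mathcal{Y}}_0(\hat{\bf v};z)=\mathcal{Y}_0({\bf v}(\hat{\bf v});z)$, which is exactly the observation the paper's (much terser) proof invokes. Your added remark about the coordinated sign choice in~\eqref{hatfinormalize} being what makes $\hat\Psi=\Psi$ an exact equality is a fair and accurate elaboration of a point the paper handles in the surrounding text rather than in the proof itself.
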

\begin{proof}
By~\eqref{VVequal314}, we know that both $Y_{\rm formal} ({\bf u};z)$ and $\hat Y_{\rm formal} ({\bf u};z)$ 
satisfy the same equations~\eqref{Yeq10102}--\eqref{Yeq20102}. By the uniqueness of the formal solution to 
\eqref{Yeq10102}--\eqref{Yeq20102} we find~\eqref{YMSkMequal} holds. The second statement is then proved 
by observing that $\hat{\mathcal{Y}}(\hat {\bf v};z)= \mathcal{Y}({\bf v}(\hat {\bf v});z)$. 
\end{proof}

\begin{remark}
The validity of Theorem~\ref{thm2519} does not require the semisimplicity.
Theorem~\ref{maintheorem} and Theorem~\ref{thm2519} imply that, for a semisimple Frobenius manifold~$M$, 
under the assumptions that $\frac{\p}{\p v^\kappa}\cdot$ is invertible, the semisimple Frobenius manifold 
$S_\kappa(M)$ has the same monodromy data $\mu, R, S, C$ as~$M$. Although not completely obvious, this result 
can also be deduced from \cite[Theorems 4.4--4.6]{Du99} (cf.~\cite{Du96}) obtained by Dubrovin. Indeed, Dubrovin gave a reconstruction 
of the semisimple Frobenius manifold from the monodromy data $\mu, R, S, C$ by solving a Riemann--Hilbert boundary value problem, 
where a particular column vector is used~\cite{Du99} in the reconstruction of the Frobenius structure  (locally) of~$M$ 
which corresponds to a specific eigenvector of the matrix~$V$ subjected to the eigenvalue~$\mu_\iota$ ($\mu_1$ in Dubrovin's notation). Recall from 
the end of Section~\ref{section2} that the monodromy data of~$M$ includes a selection of an eigenvector of~$V$. We use~$e$ to record 
such a selection because its components in the orthonormal frame $f_1,\dots,f_n$ equal to those of the eigenvector, although a marking  
is already sufficient for recording such information. So, roughly speaking, 
at least in the situation when the eigenvalues $\mu_1$, \dots, $\mu_n$ are distinct, 
the cluster of Frobenius manifolds $S_\kappa(M)$ simply correspond to 
the selections of eigenvectors (eigenvalues) of~$V$. They share the same monodromy data $\mu, R, S, C$ in suitable chambers.
\end{remark}

An identity equivalent to~\eqref{YMSkMequal} is given by 
\beq
\Phi ({\bf u};z) = \hat \Phi({\bf u};z). 
\eeq

\section{Application I. Computing monodromy data using Theorems \ref{thm2519}, \ref{maintheorem}} \label{sectionapp1}
In the previous section, we have shown in Theorem~\ref{maintheorem} that, 
for a semisimple Frobenius manifold~$M$,
the cluster of semisimple Frobenius manifolds  
$S_\kappa(M)$, $\kappa\in I$, share the same monodromy data $\mu, R, S, C$. 
So, if we know the monodromy data 
of~$M$ we immediately get those for~$S_\kappa(M)$. 
In this section, we will 
consider several classes of examples 
of Frobenius manifolds whose monodromy data are known or conjecturally known.

\subsection{Quantum cohomology}
Let $X$ be a $D$-dimensional smooth projective variety with vanishing odd cohomology, and 
 $X_{g,k,\beta}$ the moduli space of 
stable maps of degree $\beta\in H_2(X,\mathbb{Z})/{\rm torsion}$ 
with target~$X$ from curves of genus~$g$ with~$k$ distinct marked points. 
Here, $g,k\ge0$. We denote the Poincar\'e pairing on 
$H^{*}(X;\mathbb{C})$ by $\langle\,,\,\rangle$.
Choose a homogeneous basis $\phi_1 = 1,\phi_2,\dots,\phi_n$ of 
the cohomology ring $H^{*}(X;\mathbb{C})$ 
such that $\phi_\alpha \in H^{2q_\alpha}(X;\CC)$, $\alpha=1,\dots,n$, 
 $0=q_1<q_2 \leq \dots \leq q_{n-1} < q_n = d$, and $\langle \phi_\alpha,\phi_\beta\rangle = \delta_{\alpha+\beta,n+1}$.
The integrals
\beq
\int_{[X_{g,k,\beta}]^{\rm virt}}  {\rm ev}_1^*(\phi_{\alpha_1}) \cdots {\rm ev}_k^*(\phi_{\alpha_k}), \quad 
\alpha_1,\dots,\alpha_k=1,\dots,n,
\eeq 
are called {\it primary Gromov--Witten (GW) invariants of~$X$ of genus~$g$ and degree~$\beta$ \cite{BF, Du96, KM94, LT, Manin, MS, RT}}. 
Here, ${\rm ev}_a$, $a=1,\dots,k$, are the evaluation maps 
and $[X_{g,n,\beta}]^{\rm virt}$ is the {\it virtual fundamental class}, 
which is an element in the Chow ring having the complex dimension 
\beq\label{vdim}
(1-g)(D-3) + k + \langle \beta, c_1(X)\rangle. 
\eeq 
The {\it genus~$g$ primary free energy $\F_g({\bf v};Q)$} for the GW invariants of~$X$ is defined by 
\beq\label{defF}
F_g({\bf v};Q) = \sum_{k\ge0}\sum_{\alpha_1,\dots,\alpha_k \ge0} \sum_{\beta}\frac{Q^\beta v^{\alpha_1} \cdots v^{\alpha_k} }{k!} 
\int_{[X_{g,k,\beta}]^{\rm virt}} {\rm ev}_1^*(\phi_{\alpha_1}) \cdots {\rm ev}_k^*(\phi_{\alpha_k}),
\eeq
where ${\bf v}=(v^1,\dots,v^n)$ is a vector of indeterminates, and 
\beq
Q^\beta=Q_1^{m_1} \cdots Q_r^{m_r} ~ ({\rm for}~\beta=m_1\beta_1+\dots+m_r\beta_r)
\eeq
is an element of the Novikov ring. Here, $(\beta_1,\dots,\beta_r)$ is a basis of $H_2(X;\mathbb{Z})/{\rm torsion}$.

We will restrict our consideration to the case that $X$ is Fano. Then for any fixed $k\ge0$ and $\alpha_1,\dots,\alpha_k \in \{1,\dots,n\}$, 
the sum $\sum_\beta$ in~\eqref{defF} is a finite sum. Moreover, by the divisor axiom we know that 
$F_0({\bf v};Q)$ is a power series of $v^1, Q_1 e^{v^2}, \dots, Q_r e^{v^{r+1}}, v^{r+2}, \dots, v^n$ (see e.g.~\cite{Du96, Manin}). 
So, without loss of information, we can 
take $Q_1=\cdots=Q_r=1$ and denote $F_0({\bf v}; Q)|_{Q_1=\cdots=Q_r=1}=:F_0({\bf v})$. 
The power series $F_0({\bf v}; Q)$ or simply $F_0({\bf v})$ leads to 
an $n$-dimensional {\it formal} Frobenius manifold  
of charge~$D$, often denoted $M=QH(X)$, 
with the invariant flat metric $\langle \,,\, \rangle$ given by $\langle \frac{\p }{\p v^\alpha}, \frac{\p}{\p v^\beta}\rangle = \delta_{\alpha+\beta,n+1}$, 
the unity vector field $e=\frac{\p}{\p v^1}$ and the Euler vector field~$E$ given by 
\beq
E= \sum_{\alpha=1}^n \bigl( (1-q_\alpha) v^\alpha + \langle c_1(X), \phi^\alpha \rangle \bigr) \frac{\p}{\p v^\alpha}.
\eeq
In particular, $F_0({\bf v})$ satisfies the WDVV equation~\eqref{wdvv64}. 

Let us assume that the power series $F_0({\bf v})$ has a convergence domain, so that $M$ can be viewed as a Frobenius manifold 
(namely, being {\it analytic}). 
Then from Theorem~\ref{maintheorem} we know that the monodromy data of the Legendre-type transformations 
$S_\kappa(M)$, $\kappa=1,\dots,n$, 
(whenever $\frac{\p}{\p v^\kappa}\cdot$ is invertible) is equal to that of~$M$. Particularly, since 
the constant matrix $R$ can be chosen to correspond to the cup product by $c_1(X)$, we immediately get $\hat R$ for $S_\kappa(M)$.

\begin{example}[$\mathbb{P}^1$ Frobenius manifold] \label{exampleA11} $X=\mathbb{P}^1$ and $M=QH(\mathbb{P}^1)$. This Frobenius manifold is of charge $D=1$.
The potential of~$M$, i.e., 
the genus zero primary free energy of~$X$, has the explicit expression
\beq\label{p1potentialf}
F({\bf v})=\frac12 (v^1)^2 v^2 + e^{v^2}.
\eeq
The Euler vector field on~$M$ is 
\beq
E=v^1\frac{\p}{\p v^1} + 2 \frac{\p}{\p v^2}.
\eeq
Under the calibration given in~\cite{Du96, Du98, Du99, DZ-norm}, we have 
\beq\label{monop164}
\eta=\begin{pmatrix} 0 & 1 \\ 1 & 0\end{pmatrix}, \quad \mu=\begin{pmatrix} -\frac12 & 0 \\ 0 & \frac12\end{pmatrix}, 
\quad R=\begin{pmatrix} 0 & 0 \\ 2 & 0\end{pmatrix}, \quad 
S = \begin{pmatrix} 1 & 2 \\ 0 & 1\end{pmatrix}.
\eeq

The Legendre-type transformation of this Frobenius manifold was given in details in~\cite{Du96}. 

The Frobenius manifold $S_1(M)$ is $M$ itself. 

Let $\kappa=2$. 
The Frobenius manifold $S_2(M)$ is of charge $\hat D=-1$ and has the potential
\beq\label{p1fs2}
F^{S_2(M)} = \frac12 (\hat v^2)^2 \hat v^1 + \frac12 (\hat v^1)^2 \log \hat v^1 - \frac34 (\hat v^1)^2.
\eeq
This Frobenius manifold is known as the NLS Frobenius manifold as mentioned in the Introduction. It has the Euler vector field
\beq
E= 2 \hat v^1\frac{\p}{\p \hat v^1} + \hat v^2 \frac{\p}{\p \hat v^2}.
\eeq
Using Theorem~\ref{maintheorem}, we immediately obtain the following theorem which was proved in~\cite{CvdLPS} in a slightly different form.
\begin{theorem} 
\label{thmCvdLPS}
The monodromy data $\mu, R, S, C$ for the Frobenius manifold $S_2(M)$ with the potential~\eqref{p1fs2} 
can have the same expression as those given in~\eqref{monop164}.
\end{theorem}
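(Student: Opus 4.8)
The plan is to read this theorem as an immediate consequence of Theorems~\ref{thm2519} and~\ref{maintheorem}, specialized to $M=QH(\mathbb{P}^1)$ and $\kappa=2$. To invoke those theorems I must first check their two standing hypotheses for this choice: that $M$ is semisimple, and that $\frac{\p}{\p v^2}\cdot$ is invertible so that the Legendre-type transformation $S_2(M)$ is defined. I would then confirm that this $S_2(M)$ is exactly the Frobenius manifold carrying the potential~\eqref{p1fs2}, so that the monodromy identifications apply to the object named in the statement.

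Both hypotheses are verified by reading off the multiplication from~\eqref{p1potentialf}. A direct computation of the third derivatives gives $\frac{\p}{\p v^2}\cdot\frac{\p}{\p v^2}=e^{v^2}\,\frac{\p}{\p v^1}$ while $\frac{\p}{\p v^1}=e$ is the unity, so the tangent algebra at each point is isomorphic to $\CC[\xi]/(\xi^2-e^{v^2})$; since $e^{v^2}$ never vanishes this algebra has no nilpotents, whence $M$ is semisimple on its whole domain. The same relations show that the matrix of multiplication by $\frac{\p}{\p v^2}$ has determinant $-e^{v^2}\neq0$, so $\frac{\p}{\p v^2}\cdot$ is invertible and $2\in\I$. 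The identification of $S_2(M)$ with~\eqref{p1fs2} then follows from the coordinate change defining the transformation: using~\eqref{defhatv59} and~\eqref{FOmega512} with $\kappa=2$ one finds $\hat v_1=v^1$, $\hat v_2=e^{v^2}$, hence $\hat v^1=e^{v^2}$ and $\hat v^2=v^1$; feeding the inverse substitution $v^1=\hat v^2$, $v^2=\log\hat v^1$ into~\eqref{FF56} reproduces~\eqref{p1fs2} up to the usual quadratic ambiguity.

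With the hypotheses in hand the conclusion is immediate. Because $M$ and $S_2(M)$ share the same multiplication, they share the same canonical coordinates $u_1,u_2$ and the same matrix $U(\mathbf{u})$, so a line is admissible for one precisely when it is admissible for the other; choosing a common admissible line, Theorem~\ref{maintheorem} gives that the Stokes matrix $S$ and the central connection matrix $C$ of $S_2(M)$ equal those of $M$, while Theorem~\ref{thm2519} gives that $\mu$ and $R$ are shared. Combined with the values of $\mu,R,S$ displayed in~\eqref{monop164} and the central connection matrix of~$M$, this is exactly the assertion. The only genuine labor here is the potential bookkeeping of the previous paragraph, which is where a sign or normalization slip could surface; the monodromy statement itself carries no further obstacle once that identification is pinned down.
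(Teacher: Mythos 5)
Your proposal is correct and follows the same route as the paper, which simply derives this theorem as an immediate consequence of Theorems~\ref{thm2519} and~\ref{maintheorem} applied to $M=QH(\mathbb{P}^1)$ with $\kappa=2$. Your added bookkeeping is accurate: $\frac{\p}{\p v^2}\cdot\frac{\p}{\p v^2}=e^{v^2}\frac{\p}{\p v^1}$ does give semisimplicity and invertibility everywhere, and $\hat v^1=e^{v^2}$, $\hat v^2=v^1$ does reproduce~\eqref{p1fs2} via~\eqref{FF56}, so the hypotheses the paper leaves implicit are verified.
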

\begin{remark}
At a first look, one might find differences between the result of~\cite{CvdLPS} and Theorem~\ref{thmCvdLPS}, but 
actually the two agree when the indices for the flat coordinates of~$S_2(M)$ in~\cite{CvdLPS} are relabelled like what we do. 
In~\cite{CvdLPS} a direct computation is given; it is clear that 
our method is simpler.
\end{remark}
\end{example}


We have mentioned that the constant matrix $R$ can be chosen to correspond to the cup product by $c_1(X)$.
Actually, not only the matrices $\eta$, $\mu$, $R$ are related to geometry, but also the Stokes matrix~$S$ and the central connection matrix~$C$ 
are (conjecturally) related to geometry. The latter is the content of Dubrovin's Conjecture~\cite{Du98} (cf.~\cite{Zaslow} on a related conjecture), refined by 
 the Gamma Conjecture II~\cite{CDG18, GGI}. In his ICM talk, Dubrovin proposed the following conjecture.

\smallskip

\noindent {\bf Dubrovin's Conjecture}~(\cite{Du98}).
{\it  Assume that $X$ is Fano and that the power series $F_0({\bf v})$
has a convergence domain~$D$. Then 
\begin{itemize}
\item[1.] The quantum cohomology $QH(X)$ is semisimple if and only if 
the bounded derived category $D^b(X)$ admits a full exceptional collection $E_1,\dots,E_n$.
\item[2.] When $QH(X)$ is semisimple, the Stokes matrix $S$ of~$QH(X)$ is equal to the inverse of the Euler matrix of 
some full exceptional collection of~$D^b(X)$, i.e.,
$$
(S^{-1})_{ij} = \chi(E_i,E_j), \quad i,j=1,\dots,n.
$$
\item[3.] When $QH(X)$ is semisimple, the central connection matrix $C$ of~$QH(X)$ has a geometrically meaningful 
decomposition:
$$
C = C' C'',
$$
where $C'$, $C''$ are two matrices satisfying that the columns of $C''$ are components of the Chern character ${\rm ch}(E_j)\in H^*(X)$, 
and $C': H^*(X) \to H^*(X)$ is some operator satisfying $C'(c_1(X)a) = c_1(X) C'(a)$ for any $a \in H^*(X)$.
\end{itemize}}

Note that in the original statements of Dubrovin's Conjecture~\cite{Du98}, the Stokes matrix~$S$ is conjectured to be equal to the 
Euler matrix, however, in our notation the Stokes matrix~$S$ given in~\eqref{defiStokes} is equal to the inverse of the one in~\cite{Du98}. So, 
the above statements are the same as those in~\cite{Du98}. 
Parts 1,2 of Dubrovin's Conjecture were verified for a few cases, including, e.g., 
the Grassmannians~\cite{CDG18, Du98, Gu99, UEDA}.
Part~3 of Dubrovin's conjecture was refined by Cotti--Dubrovin--Guzzetti~\cite{CDG18}, namely, $C'$ corresponds to 
$
\frac{i^{\bar{D}}}{(2\pi)^{D/2}} \widehat \Gamma_X^- \cup e^{-2\pi i c_1(X)} \cup $,
where $\bar{D}$ is the residue of $D$ modulo~2, and $\Gamma_X^-:= \prod_{j=1}^D \Gamma(1-\delta_j)$ is the Gamma 
class 
with $\delta_{j}$ being Chern roots of the tangent bundle $TX$. 
This refinement is equivalent~\cite{CDG18} 
to the Gamma Conjecture~II proposed by Galkin--Golyshev--Iritani \cite{GGI}. 

\begin{remark}
In several occasions, the assumption that $F_0({\bf v})$ has a domain of convergence is indeed true. 
For example, it was suggested by Dubrovin~\cite{Du98} and 
proved by Cotti~\cite{Cotti} that, when the small quantum cohomology of~$M$ is 
semisimple, the power series $F_0({\bf v})$ has a non-empty domain of convergence. See~\cite{CI, Iritani} for 
some other occasions. 
\end{remark}

\begin{example}\label{exampleA21} The Chen--Ruan cohomology of the $\mathbb{P}^1$-orbifold $\mathbb{P}^1_{1,2}$~\cite{CR02, CR04, MT, Rossi}.
In this case, we have a three-dimensional Frobenius manifold $M$ of charge $D=1$ 
with potential:
\beq
F= \frac12 (v^1)^2 v^3 + \frac12 v^1 (v^2)^2 - \frac1{24} (v^2)^4 + v^2 e^{v^3}.
\eeq
This Frobenius manifold was found earlier in~\cite{Du96, DZ98}.
The Euler vector field is given by
\beq
E=v^1\frac{\p}{\p v^1} + \frac12 v^2 \frac{\p}{\p v^2} + \frac32 \frac{\p}{\p v^3}.
\eeq
For the knowledge about the Stokes matrix of this Frobenius manifold $M$ see e.g.~\cite{Gu01, IT}.

The Legendre-type transformations for this Frobenius manifold were explicitly known~\cite{Du96, LZZ, SS}.
Let $\kappa=2$. The Frobenius manifold $S_2(M)$ is of charge $\hat D=0$ and has the potential:
\beq\label{A2fs2}
F^{S_2(M)} = \frac{(\hat v^2)^3}{6}+\hat v^1 \hat v^2 \hat v^3 
+ \frac{1}{6} \hat v^1 (\hat v^3)^3+\frac{1}{2} (\hat v^1)^2 \log \hat v^1-\frac{3}{4}(\hat v^1)^2.
\eeq
The Euler vector field reads
\beq
E= \frac32 \hat v^1 \frac{\p}{\p \hat v^1} + \hat v^2 \frac{\p}{\p \hat v^2} + \frac12 \hat v^3 \frac{\p}{\p \hat v^3} .
\eeq
This Frobenius manifold corresponds to the the dispersionless limit 
of the (extended) constrained KP hierarchy~\cite{AK, CvdLPS2, Cheng, FYZ, LZZ}.

Let $\kappa=3$. The Frobenius manifold $S_3(M)$ is of charge $\hat D=-1$ and has the potential:
\beq\label{A2fs3}
F^{S_3(M)} = \frac{1}{2} (\hat v^3)^2 (\hat v^1)+\frac{1}{2} (\hat v^2)^2 (\hat v^3)+\frac{1}{2} (\hat v^1)^2 \log \hat v^2.
\eeq
The Euler vector field reads
\beq
E= 2 \hat v^1 \frac{\p}{\p \hat v^1} + \frac32 \hat v^2 \frac{\p}{\p \hat v^2} + \hat v^3 \frac{\p}{\p \hat v^3} .
\eeq
\end{example}

The cases of non-Fano smooth varieties and of equivariant GW theory shall be addressed elsewhere. 

\subsection{Legendre-type transformations of a tensor product}\label{sectionapp2}

The tensor product of quantum cohomologies was introduced by Kontsevich--Manin--Kaufmann~\cite{KMK} towards  
calculating the quantum cohomology of the direct product of two varieties. It was generalized to 
Frobenius manifolds in~\cite{Du99}.  In this section we study the Legendre-type transformations 
of the tensor product of two semisimple Frobenius manifolds.

Let $(M', \cdot', e', \langle,\rangle', E')$, $(M'', \cdot'', e'', \langle,\rangle'', E'')$ be two semisimple Frobenius manifolds of 
respective dimensions $n'$ and $n''$
and respective charges $D'$ and $D''$. Assume that we can take their 
flat coordinates $v^{\alpha'}$, $v^{\alpha''}$, $\alpha'=1',\dots,n'$, $\alpha''=1'',\dots,n''$, such that 
$e'=\frac{\p}{\p v^{\iota'}}$, $e''=\frac{\p}{\p v^{\iota''}}$ and 
\begin{align}
& E'=\sum_{\beta'=1}^{n'} E^{\beta'}({\bf v'}) \frac{\p}{\p v^{\beta'}}, \quad E^{\beta'}({\bf v'}) 
= \Bigl(1-\frac{D'}2-\mu_{\beta'}\Bigr) v^{\beta'} + r^{\beta'}, \label{E'528} \\
& E''=\sum_{\beta=1}^n E^\beta({\bf v''}) \frac{\p}{\p v^{\beta''}}, \quad E^{\beta''}({\bf v''}) 
= \Bigl(1-\frac{D''}2-\mu_{\beta''}\Bigr) v^{\beta''} + r^{\beta''}.\label{E''528} 
\end{align}
Recall from~\cite{Du99} that the tensor product $M=M'\otimes M''$ is a semisimple Frobenius 
manifold of dimension $n'n''$ with the Frobenius structure given by 
\begin{itemize}
\item[1.] The invariant flat metric $\langle\,,\rangle:=\langle\,,\rangle'\otimes \langle\,,\rangle''$, 
and the unity vector filed $e := e' \otimes e''$. The flat coordinates on~$M$ have double labels: $v^{(\alpha',\alpha'')}$,  $\alpha'=1,\dots,n'$, $\alpha''=1,\dots,n''$, and $e=\frac{\p}{\p v^{(\iota',\iota'')}}$. The gram matrix for $\langle\,,\rangle$ reads
\beq
\eta^{(\alpha',\alpha'') (\beta',\beta'')} = \eta^{\alpha'\beta'} \eta^{\alpha''\beta''},\quad \alpha',\beta'=1',\dots,n',~\alpha'',\beta''=1'',\dots,n''.
\eeq
\item[2.] At the points $p$ with 
\beq
v^{(\alpha',\alpha'')} = 0, \quad {\rm for}~\alpha'\neq \iota' \; {\rm and} \;  \alpha''\neq \iota'',
\eeq
the algebra $T_p M$ is given by the tensor product 
\beq
T_p M = T_{p'} M' \otimes T_{p''} M'',
\eeq
where
\beq
p'=(v^{(1',1'')}, \cdots, v^{(n',1'')}),  \quad 
p''=(v^{(1',1'')}, \cdots, v^{(1',n'')}).
\eeq
Namely,
\beq
c_{(\alpha',\alpha'')(\beta',\beta'')}^{(\gamma',\gamma'')}\big|_p = c_{\alpha'\beta'}^{\gamma'}\big|_{p'} c_{\alpha''\beta''}^{\gamma''}\big|_{p''},\quad 
\alpha',\beta',\gamma'=1',\dots,n',~\alpha'',\beta'',\gamma''=1'',\dots,n''.
\eeq
\item[3.] The charge $D=D'+D''$ and the Euler vector field is given by 
\beq
E= \sum_{1\le\alpha'\le n'\atop 1\le\alpha''\le n''} \Bigl(1-\frac{D}2-\mu_{\alpha'}-\mu_{\alpha''}\Bigr) v^{(\alpha',\alpha'')}\frac{\p }{\p v^{(\alpha',\alpha'')}}
 + \sum_{\alpha'=1}^{n'} r^{\alpha'} \frac{\p}{\p v^{(\alpha',\iota'')}} + \sum_{\alpha''=1}^{n''} r^{\alpha''} \frac{\p}{\p v^{(\iota',\alpha'')}}.
\eeq
\end{itemize}

We have the following theorem. 
\begin{theorem}
Let $\kappa'\in\{1,\dots,n'\}$ and $\kappa''\in\{1,\dots,n''\}$. Assume that 
$\frac{\p}{\p v^{\kappa'}}\cdot$ and $\frac{\p}{\p v^{\kappa''}}\cdot$ are invertible in $B'$ and $B''$, respectively, then we have 
the Frobenius manifold isomorphism 
\beq\label{Stensorformula}
S_{(\kappa',\kappa'')}(M' \otimes M'') = S_{\kappa'}(M') \otimes S_{\kappa''}(M'') .
\eeq
\end{theorem}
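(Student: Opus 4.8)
The plan is to prove~\eqref{Stensorformula} not by matching the two Frobenius structures coordinate by coordinate, but by showing that the two sides carry the same monodromy data and then invoking Dubrovin's reconstruction of a semisimple Frobenius manifold from its monodromy data. First I would record the elementary matches. Since the Legendre-type transformation leaves the multiplication, the unity and the Euler vector field intact (Theorem~A and Theorem~\ref{thm2519}), the factors $S_{\kappa'}(M')$ and $S_{\kappa''}(M'')$ carry the same multiplication, unity and Euler field as $M'$ and $M''$; thus both sides of~\eqref{Stensorformula} are tensor products built over the same pair of algebras, with unity $e'\otimes e''$. Reading off the tensor-product Euler vector field one finds $\mu_{(\alpha',\alpha'')}=\mu_{\alpha'}+\mu_{\alpha''}$, so the charge of the left-hand side is $-2\mu_{(\kappa',\kappa'')}=\hat D'+\hat D''$, matching the charge of the right-hand side; moreover the idempotents of the tensor-product algebra are $\pi_{i'}\otimes\pi_{j''}$, so on the semisimple locus all four manifolds share the canonical coordinates $u_{i'}+u_{j''}$. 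This common canonical frame is what makes the comparison of monodromy data unambiguous.

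Next I would run the monodromy bookkeeping. By Theorem~\ref{thm2519} and Theorem~\ref{maintheorem}, the Legendre transformation preserves the whole quintuple $(\mu,e,R,S,C)$ once the admissible line and the signs of $\sqrt{\eta_{ii}}$ in~\eqref{hatfinormalize} are carried over; hence $S_{(\kappa',\kappa'')}(M'\otimes M'')$ has the monodromy data of $M'\otimes M''$, while $S_{\kappa'}(M')$ and $S_{\kappa''}(M'')$ have those of $M'$ and $M''$. I would then invoke the behaviour of the monodromy data under the tensor product of Frobenius manifolds~\cite{Du99}, by which the data of $N'\otimes N''$ is assembled from the data of $N'$ and $N''$ (the spectrum through $\mu_{\alpha'}+\mu_{\alpha''}$, and the Stokes and central connection matrices through the appropriate Kronecker products). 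Writing $\mathrm{Mon}$ for the monodromy quintuple, this gives
\[
\mathrm{Mon}\bigl(S_{(\kappa',\kappa'')}(M'\otimes M'')\bigr)=\mathrm{Mon}(M'\otimes M'')=\mathrm{Mon}\bigl(S_{\kappa'}(M')\otimes S_{\kappa''}(M'')\bigr),
\]
the outer two expressions being equal because both are assembled from $\mathrm{Mon}(M')$ and $\mathrm{Mon}(M'')$ by the same tensor-product rule (using the Legendre-invariances $\mathrm{Mon}(S_{\kappa'}(M'))=\mathrm{Mon}(M')$ and $\mathrm{Mon}(S_{\kappa''}(M''))=\mathrm{Mon}(M'')$ on the right). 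Since a semisimple Frobenius manifold is reconstructed up to isomorphism from $(\mu,e,R,S,C)$ via the associated Riemann--Hilbert problem, equality of monodromy data in the common frame yields the isomorphism~\eqref{Stensorformula}.

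As a direct check of the metric datum I would verify the factorization on the coordinate cross of $M'\otimes M''$, where $\frac{\p}{\p v^{(\kappa',\kappa'')}}$ acts as $\frac{\p}{\p v^{\kappa'}}\cdot'\otimes\frac{\p}{\p v^{\kappa''}}\cdot''$ and the metric splits as $\langle\,,\rangle'\otimes\langle\,,\rangle''$. For tangent vectors $a'\otimes a''$ and $b'\otimes b''$ the definition~\eqref{defprod59} gives
\[
\langle a'\otimes a'',\,b'\otimes b''\rangle^{\wedge}
=\Bigl\langle \tfrac{\p}{\p v^{\kappa'}}\cdot' a',\,\tfrac{\p}{\p v^{\kappa'}}\cdot' b'\Bigr\rangle'\,
\Bigl\langle \tfrac{\p}{\p v^{\kappa''}}\cdot'' a'',\,\tfrac{\p}{\p v^{\kappa''}}\cdot'' b''\Bigr\rangle'',
\]
and the two factors on the right are, again by~\eqref{defprod59}, precisely the Legendre metrics $(\langle\,,\rangle')^{\wedge}$ and $(\langle\,,\rangle'')^{\wedge}$ of $M'$ and $M''$; hence the Legendre metric of $M'\otimes M''$ restricts on the cross to $(\langle\,,\rangle')^{\wedge}\otimes(\langle\,,\rangle'')^{\wedge}$, in agreement with the right-hand side.

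The main obstacle I anticipate is not a single computation but the bookkeeping of the auxiliary choices feeding the monodromy data. The Legendre step rewrites everything in the coordinates $\hat v$, so the cross exhibiting $S_{(\kappa',\kappa'')}(M'\otimes M'')$ as a tensor product is a priori not the cross of $M'\otimes M''$; reconciling the two presentations requires tracking the ordering of the canonical coordinates $u_{i'}+u_{j''}$, the sign choices in~\eqref{hatfinormalize}, the admissible line, and the marked eigenvector $e$ of $V$ through both the Legendre step and the tensor-product step at once. In particular I must make precise, and compatible with these choices, the rule by which the Stokes and central connection matrices of a tensor product are built from those of the factors; establishing this tensor-product rule (for instance that the Stokes matrix is the Kronecker product $S'\otimes S''$) is the substantive input, after which the spectral matches and the metric factorization above are routine.
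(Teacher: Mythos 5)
Your proposal follows essentially the same route as the paper: identify the monodromy data of both sides using Theorems~\ref{thm2519} and~\ref{maintheorem} together with Dubrovin's tensor-product formula for the monodromy data (\cite[Theorem~4.10]{Du99}), and then conclude by the reconstruction theorem (\cite[Theorem~4.6]{Du99}). The one caution is that the marked eigenvector of~$V$ is part of the quintuple and is \emph{not} preserved by the Legendre step (it moves from the $\iota$-eigenvector to the $\kappa$-eigenvector), so your middle equality $\mathrm{Mon}\bigl(S_{(\kappa',\kappa'')}(M'\otimes M'')\bigr)=\mathrm{Mon}(M'\otimes M'')$ holds only for the four matrices $\mu, R, S, C$; what matters, and what your bookkeeping does deliver, is that both sides of~\eqref{Stensorformula} carry the same marking $(\kappa',\kappa'')$.
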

\begin{proof}
Denote by $\mu', R', S', C', \iota'$ and $\mu'', R'', S'', C'', \iota''$ the monodromy data of $M'$ and $M''$, respectively. 
Using Theorems \ref{thm2519}, \ref{maintheorem} we know that 
$\mu', R', S', C', \kappa'$ and $\mu'', R'', S'', C'', \kappa''$ are the monodromy data for $S_{\kappa'}(M')$ and respectively $S_{\kappa''}(M'')$. 
Using \cite[Theorem~4.10]{Du99}, we know that the monodromy data of $M' \otimes M''$ is 
given by 
\beq\label{monotensor}
\mu' \otimes I + I \otimes \mu'', \quad R' \otimes I + I \otimes R'', \quad 
S'\otimes S'', \quad C'\otimes C'', \quad (\iota', \iota'').
\eeq
Similarly, the monodromy data of $S_{\kappa'}(M') \otimes S_{\kappa''}(M'')$ is 
given by 
\beq\label{monotensorkaka}
\mu' \otimes I + I \otimes \mu'', \quad R' \otimes I + I \otimes R'', \quad 
S'\otimes S'', \quad C'\otimes C'', \quad (\kappa', \kappa'').
\eeq
Using Theorems \ref{thm2519}--\ref{maintheorem}, we know that \eqref{monotensorkaka} is also the monodromy data for 
$S_{(\kappa',\kappa'')}(M' \otimes M'')$.
The theorem is proved by using~\cite[Theorems~4.6, 4.10]{Du99}.
\end{proof}
\begin{remark}
In the above theorem, 
the isomorphism~\eqref{Stensorformula} 
is understood locally (i.e., subjected to suitable chosen admissible lines); we address the global meaning for~\eqref{Stensorformula} elsewhere.
\end{remark}

\subsection{Other examples}
In this subsection, we look at a few more examples. 

\begin{example} \label{examplea2}
Simple singularity of type $A_2$. In this case, we have a two-dimensional Frobenius manifold $M$ with potential 
\beq
F({\bf v}) = \frac12(v^1)^2 v^2 + \frac1{72} (v^2)^4.
\eeq
The matrix $\eta$ is given by $\eta_{\alpha\beta}=\delta_{\alpha+\beta,3}$, and the matrix 
$\mu={\rm diag} (-1/6, 1/6)$.
The differential system~\eqref{odedub517} at the special semisimple point $(v^1=0, v^2=3)$ read 
\begin{equation*}
\left\{
\begin{array}{c}
\frac{dy^1}{dz} = 2 y^2 - \frac16 \, \frac{y^1}{z}, \\
\\
\frac{dy^2}{dz} = 2 y^1 + \frac16 \, \frac{y^2}{z}. \\
\end{array}
\right.
\end{equation*}
The canonical coordinates at $(v^1=0, v^2=3)$ are given by $(u_1=-2,u_2=2)$. 
Choosing the admissible line $\ell$ such that $\ell_+=\{z \,|\, {\rm arg} \, z=3\pi/4\}$,  one finds by a direct calculation that 
\beq
R=0, \quad 
S = \begin{pmatrix} 1 & 0 \\  -1 & 1 \end{pmatrix}, \quad C=\frac{-i}{\sqrt{2\pi}} \begin{pmatrix} \Gamma(\frac23) & \Gamma(\frac23)\theta^{5/2} \\ 
& \\  \Gamma(\frac13) \theta^{3/2} & \Gamma(\frac13)\theta^2 \end{pmatrix},
\eeq
where $\theta=\exp(2\pi i/3)$ is the third root of unity. More details will be given in a forthcoming joint work with Zhengfei Huang. 
The Frobenius manifold $S_2(M)$ has the potential 
\beq
F^{S_2(M)} ({\bf \hat v}) = \frac{ (\hat v^2)^2 \hat v^1}{2}+\frac{4}{5} \sqrt{\frac{2}{3}} \, (\hat v^1)^{\frac52} .
\eeq
\end{example}

The reason, that the above Example~\ref{examplea2}  
 is referred to as the simple singularity of type~$A_2$, is because the Frobenius manifold structure in 
 this example can be defined on the miniversal deformation of the holomorphic function 
 $f(x)=x^3$~\cite{Du96, Hertling, Saito81, Saito83, W93}. Note 
 that $0$ is an isolated critical point of non-Morse type of~$f(x)$. This critical point is of  
 $A_2$-type \cite{Arnold}, as the Milnor lattice is isomorphic to the root lattice of $A_2$-type. 
More generally, for any simple singularity of $X_n$-type, $X=A$, $D$, or $E$, there is a Frobenius  
structure on its miniversal deformation~\cite{Du96, Du98-2, Saito81, Saito83, Sai89}. 
The Milnor lattice is isomorphic to the root lattice of $X_n$-type. 
Then, from  
the period mapping~\cite{AGV, Loo, Saito83} and the Gauss--Manin connection  
of the Frobenius manifold~\cite{Du99}, we know that 
the matrices $\mu$, $R$, $S$ can be given by  
\beq
\mu={\rm diag}(m_\alpha/h-1/2)_{\alpha=1,\dots,n}, \quad R=0, \quad S+S^T={\rm Cartan},
\eeq
where $m_\alpha$, $\alpha=1,\dots,n$, are the exponents of the $X_n$-lattice and $h$ is the Coxeter number. 

Alternatively, the Frobenius structure in Example~\ref{examplea2} can be given on the 
orbit space of the Coxeter group of $A_2$-type by an explicit construction of the flat pencil \cite{Du96} (see also~\cite{Saito93}).
In general, the flat pencil construction is given for any irreducible finite Coxeter group of $X_n$-type, $X=A,B,C,D,E,F$, or $G$, 
leading to an $n$-dimensional Frobenius manifold with a polynomial potential~\cite{Du96, Zuber} (see also~\cite{Saito93}). 
When $X=A, D$, or $E$, this Frobenius manifold is isomorphic to the one given by the miniversal deformation of 
a simple singularity of $X_n$-type. Such an equivalence can be explained for example by the Brieskorn correspondence
on the relationship between the discriminant of the miniversal deformation 
and that of the orbit space. Frobenius structures are also associated to the extended affine Weyl groups~\cite{Du96, DSZZ, DZ98} 
with the choice of vertices, 
to the CCC (Complex Crystallographic Coxeter) groups~\cite{Be1, Be2, Du96} and to the extended Jacobi groups~\cite{Almeida21, Almeida22, Du09}. 
For all these Frobenius manifolds, one should in principle be 
able to read the monodromy data from the algebraic structure 
(details of this general consideration will be discussed elsewhere). 
We remark that the Frobenius structure in Example~\ref{exampleA11} 
coincide with the one constructed~\cite{Du96, DZ98} on the orbit space of the extended affine Weyl group of $A_1^{(1)}$-type, 
and the Frobenius structure in 
Example~\ref{exampleA21} coincide with the one  
constructed~\cite{Du96, DZ98} on the orbit space of the extended affine Weyl group of $A_2^{(1)}$-type.

\section{Review on genus zero and higher genus theories}\label{sectionappa}
In this section, we will review the genus zero and higher genus theories associated to a Frobenius manifold.

\subsection{Tau-functions for the principal hierarchy}\label{setupgenuszero}
Let~$M$ be a calibrated $n$-dimensional pre-Frobenius manifold with 
flat coordiantes $v^1,\dots,v^n$ being taken as in previous sections. 

Since the flows in the principal hierarchy~\eqref{principalh520} pairwise commute~\cite{Du92, Du96},  
we can solve the 
PDEs in~\eqref{principalh520} together, yielding solutions of the form
${\bf v}={\bf v}({\bf t})$, where ${\bf t}=(t^{\beta,m})_{\beta=1,\dots,n,\, m\ge0}$. 
To be precise, let ${\bf v}({\bf t})$ be a solution to the principal hierarchy~\eqref{principalh520} in
\beq\label{solutionv1520}
\bigl(\CC[[t^{1,0}-t^1_o,\dots,t^{n,0}-t^n_o]][[{\bf t}_{>0}]]\bigr)^n, 
\eeq
where $t^\beta_o$, $\beta =1,\dots,n$, are constants which we call the {\it primary initials}, 
and ${\bf t}_{>0}$ denotes the infinite vector $(t^{\beta,m})_{\beta=1,\dots,n,\, m\ge1}$. We call $t^{1,0},\dots,t^{n,0}$ the {\it primary times}, 
and call $t^{\beta,m}$, $\beta=1,\dots,n$, $m\ge1$, the {\it descendent times}.
We also require that 
\beq\label{solutionv2520}
v^\alpha(t^1_o,\dots,t^n_o,0,0,\dots,0) = v^\alpha_o. 
\eeq
Solutions in~\eqref{solutionv1520} satisfying~\eqref{solutionv2520} can be uniquely 
characterized by their initial data  
\beq
f^\alpha(x)=v^\alpha({\bf t})|_{t^{\beta,0}=\delta^{\beta}_{\iota} (x-t^\iota_o) + t^\beta_o, \, {\bf t}_{>0}={\bf 0}, \, \beta=1,\dots,n}, \quad \alpha=1,\dots,n.
\eeq
Clearly, $f^\alpha(x)$ satisfies that $(f^1(t^\iota_o), \dots, f^n(t^\iota_o))\in \phi(B)$. 
It should be noted that for the flat chart $(B, \phi)$, the primary initials $t^\beta_o$, $\beta =1,\dots,n$,
are not necessarily to be fixed constants. 

\smallskip

\noindent {\bf Definition} (\cite{DZ-norm}).
The solution ${\bf v}({\bf t})$ is called {\it monotone} if the vector 
$$\sum_{\alpha=1}^n \frac{\p v^\alpha({\bf t})}{\p x}\bigg|_{{\bf t}=(t^1_o,\dots,t^n_o,0,0,\dots,0)}\frac{\p}{\p v^\alpha}$$ is an invertible element 
of the Frobenius algebra $T_pM$.

\smallskip

Depending on the classes of functions the monotonicity would have different definitions; the above one is suitable for power-series solutions that 
are under consideration.

For a solution ${\bf v}={\bf v}({\bf t})=(v^1({\bf t}),\dots, v^n({\bf t}))$ in~\eqref{solutionv1520}
to the principal hierarchy 
satisfying~\eqref{solutionv2520}, 
there exists a function $\tau^{M, [0]}({\bf t})\in\CC[[t^{1,0}-t^1_o,\dots,t^{n,0}-t^n_o]][[{\bf t}_{>0}]]$, such that 
\beq\label{definingzerofree423}
\frac{\p^2 \log \tau^{M, [0]}({\bf t})}{\p t^{\alpha,m_1} \p t^{\beta,m_2}} = \Omega_{\alpha,m_1; \beta,m_2}^{M, [0]}({\bf v}({\bf t})), \quad 
\alpha,\beta=1,\dots,n,\, m_1,m_2\ge0.
\eeq
We call $\tau^{M, [0]}({\bf t})$ the {\it tau-function of the solution~${\bf v}({\bf t})$ to the principal hierarchy~\eqref{principalh520}}, 
and call its logarithm, denoted $\F^{M}_0({\bf t})$, the {\it genus zero free energy of the solution~${\bf v}({\bf t})$ 
to the principal hierarchy~\eqref{principalh520}}, i.e., 
$\log \tau^{M, [0]}({\bf t})=\F^{M}_0({\bf t})$. Note that for a fixed solution~${\bf v}({\bf t})$, its genus zero free energy is 
uniquely determined up to the addition of an affine-linear function of~${\bt}$.

We call the map 
\beq
(t^{1,0},\dots,t^{n,0}) \mapsto \bigl(V^1,\dots,V^n\bigr) 
\eeq
defined by putting ${\bf t}_{>0}={\bf 0}$ in~${\bf v}({\bf t})$, i.e., 
\beq
V^\alpha(t^{1,0},\dots,t^{n,0})= v^\alpha({\bf t})|_{{\bf t}_{>0}={\bf 0}}
\eeq
the {\it Riemann map associated to the solution~${\bf v}({\bf t})$}.

The {\it topological solution} ${\bf v}^M_{\rm top}(\bf t)$ \cite{Du96, DZ-norm} to the principal hierarchy is the unique 
power-series solution in $(\CC[[t^{1,0}-t^1_o,\dots,t^{n,0}-t^n_o]][[{\bf t}_{>0}]])^n$ to~\eqref{principalh520} specified by the initial data
\beq\label{initopprin521}
f_{\rm top}^{\alpha,M}(x):= v_{\rm top}^{\alpha,M}({\bf t})|_{t^{\beta,0}=\delta^{\beta}_{\iota} (x-t^\iota_o) + t^\beta_o, \, {\bf t}_{>0}={\bf 0}, \, \beta=1,\dots,n} = \delta^\alpha_\iota (x-t^\iota_o) + t^\alpha_o, \quad \alpha=1,\dots,n. 
\eeq
\begin{lemma}\label{toporiemann521}
The {\it topological solution} ${\bf v}^M_{\rm top}(\bf t)$ has the property:
\beq\label{vtopini0521}
{\bf v}_{\rm top}^{\alpha, M}({\bf t})|_{{\bf t}_{>0}={\bf 0}} \equiv t^{\alpha,0}, \quad \alpha=1,\dots,n.
\eeq
\end{lemma}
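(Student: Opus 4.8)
The plan is to show that the \emph{Riemann map} $V^\alpha(t^{1,0},\dots,t^{n,0}):={\bf v}_{\rm top}^{\alpha,M}({\bf t})|_{{\bf t}_{>0}={\bf 0}}$ of the topological solution coincides with the identity map $V^\alpha=t^{\alpha,0}$, and to do this through the primary ($m=0$) subsystem of the principal hierarchy together with a uniqueness argument. First I would isolate the primary flows. Combining the particular case $\Omega_{\beta,0;\gamma,0}^{M,[0]}=\frac{\p\theta_{\beta,1}}{\p v^\gamma}$ of \eqref{genuszerotwopointfor514} with \eqref{FOmega512} gives $\frac{\p\theta_{\beta,1}}{\p v^\gamma}=\frac{\p^2F^M}{\p v^\beta\p v^\gamma}$, so that the $m=0$ flow in \eqref{principalh520} takes the hydrodynamic-type form
\begin{equation*}
\frac{\p v^\alpha}{\p t^{\beta,0}}=\sum_{\delta=1}^n c^\alpha_{\beta\delta}({\bf v})\,\frac{\p v^\delta}{\p x},\qquad x=t^{\iota,0},
\end{equation*}
a closed system in the $n$ primary times. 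Since both $\frac{\p}{\p t^{\beta,0}}$ and the restriction ${\bf t}_{>0}={\bf 0}$ act only on primary times, they commute; hence $V^\alpha$ is again a solution of this subsystem.

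Next I would exhibit $V^\alpha=t^{\alpha,0}$ as a solution of the very same subsystem and match the data. For the identity map one has $\frac{\p t^{\delta,0}}{\p x}=\delta^\delta_\iota$, so the right-hand side collapses to $c^\alpha_{\beta\iota}({\bf v})$; by the unity property $\frac{\p}{\p v^\iota}=e$ one has $c_{\gamma\beta\iota}=\eta_{\gamma\beta}$, whence $c^\alpha_{\beta\iota}=\sum_\gamma\eta^{\alpha\gamma}\eta_{\gamma\beta}=\delta^\alpha_\beta$, exactly matching $\frac{\p t^{\alpha,0}}{\p t^{\beta,0}}=\delta^\alpha_\beta$. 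Moreover, restricting $V^\alpha=t^{\alpha,0}$ to the line $t^{\beta,0}=\delta^\beta_\iota(x-t^\iota_o)+t^\beta_o$ reproduces precisely the topological initial data $\delta^\alpha_\iota(x-t^\iota_o)+t^\alpha_o$ of \eqref{initopprin521}. Thus the identity map and the Riemann map $V^\alpha$ are two solutions of the primary subsystem carrying the same data along the coordinate line $\{t^{\beta,0}=t^\beta_o:\beta\neq\iota\}$.

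The main obstacle is the uniqueness step that forces these two solutions to agree. Viewing $x=t^{\iota,0}$ as the space variable and the remaining $t^{\beta,0}$ ($\beta\neq\iota$) as the times of the pairwise-commuting evolutionary flows above, I would argue that a solution in the class $\CC[[t^{1,0}-t^1_o,\dots,t^{n,0}-t^n_o]]$ is uniquely determined by its restriction to $\{t^{\beta,0}=t^\beta_o:\beta\neq\iota\}$: expanding in the increments $t^{\beta,0}-t^\beta_o$ and feeding the evolution equations inductively determines every Taylor coefficient from this initial function, while commutativity of the flows guarantees the construction is consistent (and the $t^{\iota,0}$-flow, being tautologically $\p_x$, imposes no extra constraint). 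Since $V^\alpha=t^{\alpha,0}$ is a solution with the correct initial function, uniqueness yields ${\bf v}_{\rm top}^{\alpha,M}({\bf t})|_{{\bf t}_{>0}={\bf 0}}=t^{\alpha,0}$, which is \eqref{vtopini0521}.
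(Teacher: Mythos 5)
Your proof is correct and follows essentially the same route as the paper, which simply observes that the right-hand side $t^{\alpha,0}$ satisfies the topological initial condition \eqref{initopprin521} and the $m=0$ equations of the principal hierarchy; you have merely spelled out the computation $c^\alpha_{\beta\iota}=\delta^\alpha_\beta$ and made the implicit uniqueness step for the primary subsystem explicit.
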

\begin{proof}
By verifying that $v_{\rm top}^\alpha({\bf t})|_{{\bf t}_{>0}={\bf 0}}$ given by the right-hand side of~\eqref{vtopini0521}
does satisfy the initial condition~\eqref{initopprin521} and the $m=0$ equations of the principal hierarchy~\eqref{principalh520}.
\end{proof}

Lemma~\ref{toporiemann521} tells that the Riemann map associated to the topological solution is nothing but the identity map.  From~\eqref{solutionv2520} and formula~\eqref{vtopini0521}, we know in particular that
\beq\label{votopttop521}
v^\alpha_{o}=t^\alpha_{o,{\rm top}}, \quad \alpha=1,\dots,n,
\eeq
where $t^\alpha_{o,{\rm top}}$ denote the primary initials associated to the topological solution.
Namely, the primary initials for the topological solution equal the coordinates 
of the center of~$\phi(B)$. Another easy fact to observe is that the topological solution is always monotone.

There are a large class of solutions to the principal hierarchy determined 
by the following genus zero Euler--Lagrange equation \cite{Du96, DZ-norm}:
\beq\label{EL424}
\sum_{\beta=1}^n \sum_{m\ge0} \bigl(t^{\beta,m}-c^{\beta,m}\bigr) 
\frac{\p \theta_{\beta,m}}{\p v^\alpha}({\bf v}({\bf t})) = 0, \quad \alpha=1,\dots,n,
\eeq
where $c^{\beta,m}$ are certain constants. (A subclass of these solutions can be identified with the 
power series 
obtained via shifting the independent  
variables by constants directly from the power series ${\bf v}_{\rm top}(\bt)$.) 
Following~\cite{Du96}, we define
\beq\label{fm01225-general}
\F^{M}_{0}({\bf t}) = \frac12 \sum_{\alpha,\beta=1}^n \sum_{m_1,m_2\ge0} 
\bigl(t^{\alpha,m_1}-c^{\alpha,m_1}\bigr)
\bigl(t^{\beta,m_2}-c^{\beta,m_2}\bigr)
\Omega^{[0]}_{\alpha,m_1;\beta,m_2}({\bf v}({\bf t})),
\eeq
which can be verified to be the 
genus zero free energy of the solution~${\bf v}({\bf t})$. 
We also recall here that 
the power series $\F^{M}_{0}({\bf t})$ defined by~\eqref{fm01225-general} 
 satisfies the following equations:
\begin{align}
&\sum_{\alpha=1}^n \sum_{m\ge1} \bigl(t^{\alpha,m}-c^{\alpha,m}\bigr) \frac{\p \F^{M}_{0}({\bf t})}{\p t^{\alpha,m-1}} 
+ \frac12 \sum_{\alpha,\beta=1}^n \eta_{\alpha\beta}(t^{\alpha,0}-c^{\alpha,0})(t^{\beta,0}-c^{\beta,0}) = 0, \label{string0423}\\
& \sum_{\alpha=1}^n \sum_{m\ge0} \bigl(t^{\alpha,m}-c^{\alpha,m}\bigr) \frac{\p \F^{M}_{0}({\bf t})}{\p t^{\alpha,m}} 
= 2 \F^{M}_{0}({\bf t}),\label{dilaton0423}
\end{align}
which are called the genus zero 
{\it string equation} and the genus zero {\it dilaton equation}, respectively. 
For the case when $c^{\alpha,m}=\delta^{\alpha,\iota}\delta^{m,1}$, 
we have ${\bf v}(\bt)= {\bf v}_{\rm top}(\bt)$,  
and the power series $\F^{M}_{0}({\bf t})$ defined by~\eqref{fm01225-general} is called the {\it topological genus zero free energy of~$M$}, denoted by 
$\F^{M}_{{\rm top}, 0}({\bf t})$. 

\subsection{Partition functions of a semisimple Frobenius manifold}\label{appa86}
In this subsection, we consider the case that $M$ is a calibrated semisimple Frobenius manifold with the flat coordinates $v^1,\dots, v^n$ 
being taken as in Section~\ref{section2}. 

\subsubsection{The Dubrovin--Zhang approach}\label{DZapproachA}
Following~\cite{DZ-norm}, define the linear operators $a_{\alpha,p}$ by
\beq\label{creation_annihilation}
a_{\alpha,p} = \left\{\begin{array} {cl}
\epsilon \frac{\p}{\p
t^{\alpha,p-1/2}}, & p>0, \\
\epsilon^{-1} (-1)^{p+1/2} \eta_{\alpha\beta} \,
(t^{\beta,-p-1/2}-c^{\beta,-p-1/2}), &  p<0, \end{array}\right.
\eeq
where $\alpha=1,\dots,n$ and $p\in \mathbb{Z}+1/2$.
Denote
\begin{align}
f_\alpha(\lambda;\nu) &= \int_0^\infty \frac{dz}{z^{1-\nu}} e^{-\lambda z}
\sum_{p\in\mathbb{Z}+\frac12} \sum_{\beta=1}^n a_{\beta,p} \left(z^{p+\mu} z^R\right)^\beta_\alpha, \label{natural_with_nu}\\
G^{\alpha\beta}(\nu) &= -\frac{1}{2\pi} \Bigl[\bigl(e^{\pi i R}e^{\pi i (\mu+\nu)}+e^{-\pi i R}e^{-\pi i (\mu+\nu)}\bigr)\eta^{-1}\Bigr]^{\alpha\beta}.\label{Galphabetanu0411}
\end{align}
Here, the right-hand side of~\eqref{natural_with_nu} is understood as a term-by-term integral.
Following~\cite{DZ-norm}, define the regularized stress tensor $T(\lambda;\nu)$ as follows:
\begin{equation}
T(\lambda;\nu) = -\frac12 : \p_\lambda(f_\alpha(\lambda;\nu)) G^{\alpha\beta}(\nu) \p_\lambda(f_\beta(\lambda;-\nu)):
+ \frac{1}{4\lambda^2} {\rm tr} \Bigl(\frac{1}{4}-\mu^2\Bigr). \label{Virasoro_field_with_nu}
\end{equation}
Here, ``$:~:$" denotes the {\it normal
ordering} (putting the annihilation operators always on the right of the creation operators).
The regularized stress tensor has the form~\cite{DZ-norm}:
\begin{align}\label{TnuLmnu427}
& T(\lambda;\nu) = \sum_{i\in \mathbb{Z}} \frac{L_i(\nu)}{\lambda^{i+2}}.
\end{align}
Then, as it is shown in~\cite{DZ-norm} (see also~\cite{DZ04}), the operators $L_i$, $i\ge-1$, defined by 
\beq\label{viralm0922}
L_i:=\lim_{\nu\to 0} L_i(\nu), \quad i\geq-1,
\eeq 
yield the {\it Virasoro operators} \cite{DZ99, DZ-norm, EHX, LiuT} for the Frobenius manifold~$M$.
 The operators $L_i$ satisfy the following Virasoro commutation relations:
\beq\label{Lmn0407}
[L_i,L_j] = (i-j) L_{i+j}, \quad \forall\,i,j\geq -1.
\eeq
In~\cite{DZ99} Dubrovin--Zhang obtain the following genus zero Virasoro constraints for 
the power series $\F^{M}_{0}({\bf t})$ defined by~\eqref{fm01225-general}
\beq\label{virasorogenus00524}
e^{-\e^{-2} \F^M_0(\bt)} L_i \bigl(e^{\e^{-2} \F^M_0(\bt)}\bigr) = O(1) \quad (\e\to0), 
\quad 
i\geq-1.
\eeq
We note that the $i=-1$ equation of~\eqref{virasorogenus00524} coincides with~\eqref{string0423}.

Under the semisimplicity assumption, Dubrovin--Zhang~\cite{DZ-norm} give a way of reconstructing the 
higher genus theory via solving  
 Virasoro constraints together with the dilaton equation:
\begin{align}
& L_i Z_{\rm top}^M(\bt;\e) = 0, \quad \forall\, i\geq-1, \label{virazM1213}\\
& \sum_{\alpha=1}^n \sum_{m\ge0} \bigl(t^{\alpha,m}-c^{\alpha,m}\bigr) 
\frac{\p Z_{\rm top}^M(\bt;\e)}{\p t^{\alpha,m}} + \e \frac{\p Z_{\rm top}^M(\bt;\e)}{\p \e}+ \frac{n}{24} Z_{\rm top}^M(\bt;\e) = 0, \label{dilatonzM1213}
\end{align}
where $c^{\alpha,m}=\delta^{\alpha,\iota}\delta^{m,1}$.
For more details about Virasoro constraints see \cite{DVV91-2, DZ99, DZ-norm, EHX, Givental, LiuT}. 
We note that, in the approach of Dubrovin--Zhang, in order to get uniqueness it is also required that 
$\log Z_{\rm top}^M(\bt;\e)$
 has the genus expansion (see~\cite{Du14, DZ-norm} and the references therein) of the following form:
\begin{align}
& \log Z_{\rm top}^M(\bt;\e) = \sum_{g\ge0} \e^{2g-2} \F_{{\rm top},g}^{M}(\bt),
\end{align}
where  
$\F_{{\rm top},g}^M(\bt)$, $g\geq1$, are power series to be determined and are 
assumed to satisfy the following ansatz:
\begin{align}
&  \F_{{\rm top},g}^M(\bt) 
= F^M_g\biggl({\bf v}_{\rm top}(\bt),\frac{\p {\bf v}_{\rm top}(\bt)}{\p x}, \dots, \frac{\p^{3g-2} {\bf v}_{\rm top}(\bt)}{\p x^{3g-2}}\biggr). \label{fmg8}
\end{align}

Dubrovin and Zhang~\cite{DZ-norm} recast the above equations into the {\it loop equation} 
\begin{align}
& \sum_{\rho=1}^n \sum_{r\ge0} \frac{\p \Delta F^M}{\p v^{\rho}_r} \p^r\biggl(\biggl(\frac1{E-\lambda}\biggr)^\rho\biggr) 
+ \sum_{\rho,\alpha,\beta=1}^n 
\sum_{r\ge1} \frac{\p \Delta F^M}{\p v^{\rho}_r} \sum_{k=1}^r 
\binom{r}{k} \p^{k-1}\biggl(\frac{\p p_\alpha}{\p v^1}\biggr) G^{\alpha\beta} \p^{r-k+1} \biggl(\frac{\p p_\beta}{\p v_\rho}\biggr) \nn \\
& = - \frac1{16} {\rm tr} \Bigl((\mathcal{U}-\lambda)^{-2}\Bigr) + \frac14 {\rm tr} \Bigl(\bigl((\mathcal{U}-\lambda)^{-1} \mathcal{V}\bigr)^2\Bigr) \nn\\
& \quad + \frac{\e^2}2 \sum_{\alpha,\beta,\rho,\sigma=1}^n\sum_{k,\ell\ge0}  \biggl(\frac{\p^2 \Delta F^M}{\p v^{\rho}_k \p v^{\sigma}_\ell} + 
\frac{\p \Delta F^M}{\p v^{\rho}_k }\frac{\p \Delta F^M}{\p v^{\sigma}_\ell}\biggr) 
\p^{k+1}\biggl(\frac{\p p_\alpha}{\p v_\rho}\biggr) 
G^{\alpha\beta} \p^{\ell+1} \biggl(\frac{\p p_\beta}{\p v_\sigma}\biggr) \nn\\
& \quad + \frac{\e^2}2 \sum_{\alpha,\beta,\rho=1}^n \sum_{k\ge0} \frac{\p \Delta F^M}{\p v^{\rho}_k} \p^{k+1} \biggl(\biggl(\nabla \biggl(\frac{\p p_\alpha}{\p \lambda}\biggr) \cdot 
\nabla \biggl(\frac{\p p_\beta}{\p \lambda}\biggr) \cdot \p({\bf v})\biggr)^\rho \, \biggr) G^{\alpha\beta} \label{loopM1208}
\end{align}
and
\begin{align}
& \sum_{\alpha=1}^n \sum_{k\ge1} k v^{\alpha}_k \frac{\p F^M_g}{\p v^{\alpha}_k} = (2g-2) F^M_g + \delta_{g,1} \frac{n}{24}, \quad g\geq1.
\label{dilatonjet1208}
\end{align}
Here $\p=\sum_{\alpha=1}^n\sum_{k\ge0} v^{\alpha}_{k+1} \frac{\p}{\p v^\alpha_k}$, 
$F_g^M=F_g^M({\bf v}, {\bf v}_1, \dots, {\bf v}_{3g-2})$, $g\geq1$, and $\Delta F^M = \sum_{g\geq1} \e^{2g-2} F^M_g$. 

\smallskip

\noindent {\bf Theorem B} (Dubrovin--Zhang~\cite{DZ98oneloop, DZ99, DZ-norm}).
{\it Let $M$ be a semisimple calibrated Frobenius manifold. The loop equations~\eqref{loopM1208}--\eqref{dilatonjet1208} 
have a unique solution up to an additive pure constant. Moreover, 
for $g=1$, $F^M_1({\bf v}, {\bf v}_1)$ can have the explicit expression
\beq
F^M_1({\bf v}, {\bf v}_1) = G({\bf v}) + \frac1{24} 
\log \det \biggl( \,\sum_{\gamma=1}^n c^{\alpha}_{\beta\gamma}({\bf v}) v^\gamma_1\biggr) , \quad G({\bf v}) := \frac{\tau_I({\bf u}({\bf v}))}{(\det(\frac{\p v^\alpha}{\p u_i}))^{\frac{1}{24}}} ,
\eeq
where $\tau_I({\bf u})$ is the isomonodromic tau-function defined via~\eqref{isotaueq}, and 
for $g\ge2$, the unique solution $F^M_g$ depends  
polynomially on $v^{\alpha}_2$, \dots, $v^{\alpha}_{3g-2}$ and rationally on $v^{\alpha}_1$, $\alpha=1,\dots,n$.}

\smallskip

\noindent We call $F^{M}_g(\bt)$ the {\it genus $g$ free energy of~$M$}.
We note that the function $G({\bf v})$ in the above Theorem~B is called the {\it $G$-function}, 
specified~\cite{DW90, DZ98oneloop, Getzler} by Getzler's equation~\cite{Getzler} and 
a certain quasi-homogeneity. 
We also note that the genus one free energy $F^M_1({\bf v}, {\bf v}_1)$ is defined up to the addition of an arbitrary pure constant, 
so the identification of the genus one free energy allows the freedom of adding a pure constant. 

We call $\F^{M}_{{\rm top},g}(\bt)$ the {\it topological genus $g$ free energy of~$M$}, 
call $\F^{M}_{{\rm top}}(\bt;\e)$ the {\it topological free energy of~$M$}, and 
 call $Z^{M}_{{\rm top}}(\bt;\e)$ the {\it topological partition function of~$M$}. 
 For general constants $c^{\beta,m}$ such that ${\bf v}(\bt)$ is a power series solution to~\eqref{EL424}, 
 define $\F^{M}_g(\bt)$ as the right-hand side of~\eqref{fmg8} but with ${\bf v}_{\rm top}(\bt)$ replaced by ${\bf v}(\bt)$.
We call $\F^{M}_g(\bt)$ the {\it genus $g$ free energy of~$M$} associated to~${\bf v}(\bt)$, call $\F^M(\bt;\e):=\sum_{g\ge0} \e^{2g-2}\F_g^M(\bt;\e)$  
the {\it free energy of~$M$} associated to~${\bf v}(\bt)$, and call $Z^M(\bt;\e):=\exp \F^M(\bt;\e)$ the {\it partition function of~$M$} associated to~${\bf v}(\bt)$ 
which satisfies~\eqref{virazM1213}, \eqref{dilatonzM1213} with $Z^M_{\rm top}$ replaced by $Z^M$.

In the approach of Dubrovin and Zhang, a hierarchy of evolutionary PDEs is also constructed, which is shown to be a 
tau-symmetric integrable hierarchy of hamiltonian evolutionary 
PDEs~\cite{BPS12-1, BPS12-2, DZ-norm}, called the {\it integrable hierarchy of topological type associated to~$M$}, 
 also known as the {\it Dubrovin--Zhang hierarchy of~$M$}.
This hierarchy is defined as the substitution of the following quasi-Miura transformation 
\beq\label{qm1225}
u^{\rm norm}_\alpha = v_\alpha + \e^2 \p_x \p_{t^{\alpha,0}} \bigl(F^M_g\bigr), \quad \alpha=1,\dots,n,
\eeq
in the principal hierarchy of~$M$. 
Clearly, 
\beq
u^{\rm norm}_{\alpha, {\rm top}}(\bt;\e) := \e^2 \frac{\p^2 \F^{M}_{{\rm top}}(\bt;\e)}{\p t^{\alpha,0} \p t^{\iota,0}}
\eeq
is a particular solution to the Dubrovin--Zhang hierarchy, which is called the {\it topological solution to the Dubrovin--Zhang hierarchy}.
Moreover, define $\Omega^M_{\alpha,m_1; \beta, m_2}$ as the substitution of the inverse of the quasi-Miura transformation~\eqref{qm1225} in 
\beq\label{defiOmega}
\Omega_{\alpha,m_1; \beta, m_2}^{M, [0]}({\bf v}) + \sum_{g\geq1} \e^{2g}  \p_{t^{\alpha,m_1}} \p_{t^{\beta,m_2}} \bigl(F^M_g\bigr).
\eeq
In particular, $\Omega_{\alpha,0; \iota,0}^M=u^{\rm norm}_\alpha$, $\alpha=1,\dots,n$.
It was proved in~\cite{BPS12-1, BPS12-2} that for all $\alpha,\beta=1,\dots,n$ and $m_1,m_2\ge0$, $\Omega^M_{\alpha,m_1; \beta, m_2}$ 
are differential polynomials of~${\bf u}^{\rm norm}=(u^{\rm norm}_1,\dots,u^{\rm norm}_n)$.
We call $\Omega^M_{\alpha,m_1; \beta, m_2}$ the {\it two-point correlation functions}, {\it aka} the {\it tau-structure}, for the 
Dubrovin--Zhang hierarchy, and call ${\bf u}^{\rm norm}$ the {\it normal coordinates}. 
For an arbitrary solution~${\bf u}^{\rm norm}({\bf t};\e)$ to the Dubrovin--Zhang hierarchy, there exists a series $\tau({\bf t};\e)$ such that 
\beq
\Omega^M_{\alpha,m_1; \beta, m_2}\Big|_{u^{\rm norm}_{\rho, \ell} \, \mapsto \, \frac{\p^\ell u^{\rm norm}_{\rho}(\bt;\e)}{\p x^\ell}, \, \ell\ge0, \, \rho=1,\dots,n} = \e^2 \frac{\p^2 \log \tau(\bt;\e)}{\p t^{\alpha,m_1} \p t^{\beta,m_2}}.
\eeq
We call $\tau({\bf t};\e)$ the {\it tau-function of the solution~${\bf u}^{\rm norm}({\bf t};\e)$} to the Dubrovin--Zhang hierarchy of~$M$.
It is clear from the definition that partition functions of~$M$ are tau-functions for the Dubrovin--Zhang hierarchy of~$M$.
We often refer to the topological partition function $Z^{M}_{{\rm top}}(\bt;\e)$ as 
the topological tau-function for the Dubrovin--Zhang hierarchy of~$M$.

\subsubsection{The Givental approach} 
Givental in~\cite{Givental} (cf.~\cite{DZ-norm, Givental01-1})
introduced a quantization approach to the construction of the partition function.

Let $V$ be a vector space. On~$V$ assign a non-degenerate symmetric bilinear form
$\langle\,,\rangle_V$. 
Denote by $\V$ the space of $V$-valued functions defined on the unit circle $S^1$ which can be
extended to an analytic function in a small annulus. On $\V$ there is a natural polarization $\V=\V_+\oplus\V_-$, where functions in $\V_+$
can be analytically continued inside of $S^1$, while functions in $\V_-$ can be analytically continued outside of $S^1$ and vanish at $z=\infty$.
Define a symplectic structure $\omega$ on $\V$ defined by
\begin{equation*}
\omega(f(z),g(z))=\frac{1}{2\pi i}\oint_{S^1} \langle f(-z),g(z)\rangle_V dz, \quad \forall\ f(z),g(z)\in\V.
\end{equation*}
The pair $(\V,\omega)$ is called \emph{Givental's symplectic space} associated to $(V, \langle\,,\rangle_V)$.

Take $w_1,\dots,w_n$ a basis of~$V$. Let $w^1,\dots,w^n$ be the dual basis with respect to $\langle\,,\rangle_V$. Any element
$f(z)\in\V$ can be written as
\[f(z)=\sum_{k\ge0}\bigl((-1)^{k+1} p_k z^{k}+q^k z^{-k-1}\bigr),\]
where $q^k=\sum_{\alpha=1}^n q^{\alpha,k} w_\alpha$ and $p_k=\sum_{\alpha=1}^n p_{\alpha,k} w^\alpha$. This gives the
Darboux coordinates
\[\{q^{\alpha,k}, p_{\alpha,k} \mid \alpha=1,\dots,n,\, k\ge 0\}\]
of the symplectic structure $\omega$.

Let $p^*\in M^0$ and take 
${\bf u}=(u_1,\dots,u_n)$ a system of canonical coordinates around~$p$ as before.
Let $V=T_{p^*}M$ and let $\langle\,,\rangle_V$ be the invariant flat metric of~$M$ being restricted at~$p^*$. 
Take the basis $w_\alpha$ to be the coordinate vector fields $\frac{\p}{\p v^\alpha}$ being restricted at~$p^*$. 
Define the canonical quantization of the corresponding Darboux coordinates $q^{\alpha,k}, p_{\alpha,k}$ as follows:
\beq
\Q(p_{\alpha,k}) = \e \frac{\p}{\p t^{\alpha,k}},\quad \Q(q^{\alpha,k})=\e^{-1} \tilde t^{\alpha,k}.
\eeq
Similarly, for $V={\rm Span}_\CC \{f_i|_p \mid i=1,\dots,n\}$ with $f_i$ being the orthonormal frame as in the previous sections, 
define the canonical quantization of the corresponding Darboux coordinates $q^{i,k}, p_{i,k}$ as follows:
\beq
\Q(p_{i,k}) = \e \frac{\p}{\p t^{(i),k}},\quad \Q(q^{i,k})=\e^{-1} \tilde t^{(i),k}.
\eeq

Let $A(z)$ be an $\mathrm{End}(V)$-valued function satisfying $A^*(-z)+A(z)=0$, where 
$*$ denotes the transposition with respect to $\langle \,, \rangle_V$. 
Such an $A(z)$ is an infinitesimal symplectic transformation
of $(\V,\omega)$, whose hamiltonian is given by
\beq
H_{A(z)}(f(z))=\frac12 \omega(f(z), A(z) f(z)).
\eeq 
This hamiltonian is a quadratic function on $\V$, and its quantization is defined by
\beq
\Q(p_Ip_J)=\e^2\frac{\p^2}{\p t^I\p t^J},\quad \Q(p_Iq^J)=\tilde t^J\frac{\p}{\p t^{I}}, \quad \Q(q^Iq^J)=\e^{-2} \tilde t^I \tilde t^J,
\eeq
where $I,J$ are $(\alpha,k)$, $(\beta,\ell)$ or $((i),k)$, $((j),\ell)$. 
Denote the quantization of $H_{A(z)}$ by $Q(H_{A(z)})$. These operators
satisfy the commutation relation
\beq
\bigl[\Q(H_{A(z)}), \Q(H_{B(z)})\bigr] = \Q(H_{[A(z), B(z)]}) +\mathcal{C}\bigl(H_{A(z)}, H_{B(z)}\bigr),
\eeq
where $\mathcal{C}$ is the $2$-cocycle satisfying 
\beq
\mathcal{C}(p_Ip_J, q^Kq^L)=-\mathcal{C}(q^Kq^L, p_Ip_J)=\delta_I^K\delta_J^L+\delta_I^L\delta_J^K, 
\eeq
and $\mathcal{C}=0$ for any other pairs of quadratic monomials. Here $I, J, K, L$ are indices of the form $(\alpha,p)$ or $((i),k)$.
Let $G(z)=e^{A(z)}$ be the symplectic transformation, then the quantization $\Q(G(z))$ of $G(z)$ is defined as $e^{\Q(H_{A(z)})}$.

Two important types of symplectic transformations are calculated in~\cite{Givental}:
\begin{itemize}
\item Type I \quad Let $G(z)$ be a symplectic transformation which is analytic inside of the unit circle. Then, for an arbitrary function
$I[\mathbf{q}(z)]$ defined on $\V_-$, we have
\beq
\bigl(\Q(G(z))^{-1} I\bigr)[\mathbf{q}(z)]=e^{\frac{1}{2\e^2}\langle\mathbf{q},\Omega \mathbf{q}\rangle_V}I\bigl[(G(z)\mathbf{q}(z))_{-}\bigr],
\eeq
where $\langle\mathbf{q},\Omega \mathbf{q}\rangle_V=\sum_{k,l\ge0}\langle q^k, \Omega_{kl}q^l\rangle_V$ is defined by
\[\sum_{k,l\geq 0} \Omega_{kl} w^{k}z^{l}=\frac{G^{*}(w) G(z) -{\rm id}}{w+z}.\]
\item Type II \quad Let $G(z)$ be a symplectic transformation which is analytic outside of the unit circle. Then for an arbitrary function
$I[\mathbf{q}(z)]$ defined on $\V_-$ we have
\beq
\bigl(\Q(G)(z) I\bigr)[\mathbf{q}(z)]=\bigl(\,e^{\frac{\e^2}{2}\langle\p_\mathbf{q}, W \p_\mathbf{q}\rangle_V}I \,\bigr)\bigl[G^{-1}(z)\mathbf{q}(z)\bigr],
\eeq
where $\langle\p_\mathbf{q}, W\p_\mathbf{q}\rangle_V=\sum_{k,l\geq 0} \langle p_k, W_{kl} p_l\rangle_V$ is defined by
$$\sum_{k,l\geq 0} (-1)^{k+l} W_{kl} w^{-k} z^{-l}=\frac{G^*(w)G(z)-{\rm id}}{w^{-1}+z^{-1}}.$$
\end{itemize}

\begin{theorem}[\cite{Givental} (cf.~\cite{DZ-norm})]
The topological partition function of~$M$ has the expression:
\[Z^{M, {\rm top}}(\bt;\e)=\tau_I({\bf u}) \Q(\Theta({\bf v}({\bf u});z))^{-1} \Q(\Psi_u)^{-1} \Q(\Phi({\bf u};z))e^{\Q(z U)}\biggl( \,\prod_{i=1}^n Z^{\rm WK}(\bt^{(i)};\e) \biggr). \]
\end{theorem}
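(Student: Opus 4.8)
The plan is to prove the identity by showing that the right-hand side, which I denote $\widetilde Z(\bt;\e)$, satisfies the three properties that characterize $Z^{M}_{\rm top}(\bt;\e)$ in the Dubrovin--Zhang reconstruction---the Virasoro constraints~\eqref{virazM1213}, the dilaton equation~\eqref{dilatonzM1213}, and the genus expansion ansatz~\eqref{fmg8}---and then invoking the uniqueness asserted in Theorem~B. For the base case, each factor $Z^{\rm WK}(\bt^{(i)};\e)$ is the topological partition function of the one-dimensional Frobenius manifold, hence satisfies the $n=1$ Virasoro and dilaton constraints in the variables $\bt^{(i)}$. Consequently the product $\prod_{i=1}^n Z^{\rm WK}(\bt^{(i)};\e)$ is annihilated by a decoupled family of operators $L_i^{\circ}$, namely the operators~\eqref{viralm0922} built from the stress tensor~\eqref{Virasoro_field_with_nu} in the reference situation where $\Theta=I$, $\Psi=I$, $\Phi=I$ and $U$ is diagonal.

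Next I would read each quantized factor as one elementary step transporting this decoupled reference structure to~$M$, using the Type~I and Type~II formulas recorded above. Since $\Theta({\bf v};z)$ is analytic inside the unit circle, $\Q(\Theta({\bf v}({\bf u});z))^{-1}$ is of Type~I and installs the chosen calibration; since $\Phi({\bf u};z)$ is a series in $z^{-1}$, $\Q(\Phi({\bf u};z))$ is of Type~II and plays the role of the genus-raising $R$-matrix through its term $\tfrac{\e^2}{2}\langle\p_{\bf q}, W\p_{\bf q}\rangle_V$; the constant transformation $\Q(\Psi_u)^{-1}$ implements the passage from the orthonormal canonical frame $f_i$ to the flat frame $\p/\p v^\alpha$, using $\Psi({\bf u})^T\Psi({\bf u})=\eta$ from~\eqref{orthoeP}; and $e^{\Q(zU)}$ implements the translation by the canonical coordinates. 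The decisive step is the intertwining relation $\Q(\cdots)\,L_i^{\circ}\,\Q(\cdots)^{-1}=L_i+\gamma_i({\bf u})$, where $L_i$ are the Virasoro operators of~$M$ and $\gamma_i({\bf u})$ is a scalar. This is the Fock-space counterpart of the statement that conjugating the free-field realization $f_\alpha(\lambda;\nu)$ of~\eqref{natural_with_nu}, appearing in the stress tensor~\eqref{Virasoro_field_with_nu}, by the symplectic data $(\Theta,\Psi,\Phi,e^{zU})$ reproduces the stress tensor built from the genuine $\mathcal{U}$ and $\mathcal{V}$ of~$M$. The scalar anomaly $\gamma_i({\bf u})$, produced by the $2$-cocycle $\mathcal{C}$, is absorbed by the prefactor $\tau_I({\bf u})$: I would verify that the ${\bf u}$-dependence that $\gamma_i$ forces on $\widetilde Z$ matches $d\log\tau_I=\sum_{i=1}^n H_i({\bf u})\,du_i$ from~\eqref{isotaueq}, so that $L_i\widetilde Z=0$ and $\widetilde Z$ is independent of the auxiliary point~$p^*$.

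The dilaton equation~\eqref{dilatonzM1213}, including the anomaly $n/24$, would then follow by summing the $n$ copies of the point dilaton equation, each carrying the anomaly $1/24$, and by checking that the quantized transformations are homogeneous of the correct weight under the dilaton grading. The genus expansion ansatz~\eqref{fmg8} follows from the explicit $\e$-grading in the Type~I and Type~II formulas: the successive actions of $\Q(\Theta)^{-1}$, $\Q(\Psi_u)^{-1}$ and $\Q(\Phi)$ on a product of Witten--Kontsevich tau-functions produce free energies that are differential polynomials in the jets of ${\bf v}_{\rm top}(\bt)$, of exactly the prescribed form. With these three properties in hand, the uniqueness in Theorem~B forces $\widetilde Z=Z^{M}_{\rm top}$.

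The hard part will be the intertwining relation together with the precise identification of the scalar prefactor. Because the quantization is only a projective representation, the cocycle $\mathcal{C}$ obstructs an exact conjugation of the two Virasoro representations, and one must show that the resulting ${\bf u}$-dependent anomaly integrates precisely to $\log\tau_I({\bf u})$. This is where the antisymmetry of $V$, the Lie--Poisson equations~\eqref{veq0103} for $V$, and the closedness of the one-form $\sum_{i=1}^n H_i\,du_i$ are indispensable; at genus one the same matching is exactly the identity, recorded in Theorem~B, that the $G$-function equals $\tau_I({\bf u})/(\det(\p v^\alpha/\p u_i))^{1/24}$.
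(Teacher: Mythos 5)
First, a point of comparison: the paper does not prove this statement at all --- it is quoted from Givental (cf.\ Dubrovin--Zhang) as a known theorem inside a review section, so there is no in-paper argument to measure your proposal against. Your strategy --- show that the right-hand side satisfies the Virasoro constraints \eqref{virazM1213}, the dilaton equation \eqref{dilatonzM1213} and the genus expansion with the jet ansatz \eqref{fmg8}, then invoke the uniqueness of Theorem~B --- is the standard route by which the Givental and Dubrovin--Zhang constructions are identified in the cited references, so the architecture is sound.

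That said, as written the proposal has two genuine gaps, both sitting exactly where you flag ``the hard part.'' First, the intertwining relation $\Q(\cdots)\,L_i^{\circ}\,\Q(\cdots)^{-1}=L_i+\gamma_i({\bf u})$ and the matching of the accumulated cocycle anomaly with $d\log\tau_I=\sum_i H_i\,du_i$ from~\eqref{isotaueq} are asserted rather than derived; this computation (whose genus-one shadow is precisely the $G$-function identity recorded in Theorem~B) is the substantive content of the theorem and cannot be taken on faith. Second, and more seriously for the logic of the final step: the uniqueness in Theorem~B is uniqueness of solutions of the loop equation \eqref{loopM1208}--\eqref{dilatonjet1208}, which presupposes that each genus-$g$ free energy of the right-hand side is a function of the jets $\p_x^m{\bf v}_{\rm top}(\bt)$ with $m\le 3g-2$, not of $\bt$ directly. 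The quantization formula does not manifestly have this form: the Type~I and Type~II actions produce expressions in the times $\bt$ and in the auxiliary point $p^*$, and reorganizing them into the $(3g-2)$-jet form \eqref{fmg8} --- equivalently, proving the $p^*$-independence and the $3g-2$ property for the Givental expression --- is a nontrivial theorem in its own right. Until both of these are supplied, the appeal to uniqueness does not close, so the proposal is a correct outline of the right proof rather than a proof.
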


\section{Application II. The $\kappa$th partition function}\label{sectionapp3}
Let~$M$ be a calibrated $n$-dimensional pre-Frobenius manifold. We will use the setup the same as in Section~\ref{sectionappa}.
Fix $\kappa\subset \{1,\dots,n\}$ and we will assume that $\frac{\p}{\p v^\kappa}\cdot$ is invertible on some open set.
In this section, we define the $\kappa$th solution to the principal hierarchy~\eqref{principalh520} 
of~$M$ from the viewpoint of Legendre-type transformations,  
and use it to define the $\kappa$th partition function of~$M$  
for the case when $M$ is a calibrated semisimple Frobenius manifold. 

 The following lemma is helpful. 

\begin{lemma}\label{propints521}
Assume that $\frac{\p}{\p v^\kappa}\cdot$ is invertible on some open set~$B$.
If ${\bf v}({\bf t})$ satisfies the principal hierarchy of~$M$, then 
$\hat {\bf v}({\bf t})(=\hat\phi\circ\phi^{-1}({\bf v}({\bf t})))$ satisfies the principal hierarchy of~$S_{\kappa}(M)$; 
conversely, if $\hat {\bf v}({\bf t})$ is a solution to the principal hierarchy of~$S_{\kappa}(M)$, then 
${\bf v}({\bf t})(=\phi \circ \hat\phi^{-1}(\hat {\bf v}({\bf t})))$ satisfies the principal hierarchy of~$M$.
\end{lemma}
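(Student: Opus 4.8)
The plan is to recast both principal hierarchies in their symmetric ``tau-structure'' form, in which each flow is a space-derivative of a genus zero two-point function, and then to exploit the fact that along a solution these two-point functions are the Hessian of a single genus zero free energy. The only genuine difference between the two hierarchies is the choice of space variable: for~$M$ one has $x=t^{\iota,0}$ (cf.~\eqref{ix1225}), whereas for $S_\kappa(M)$ the unity is $e=\frac{\p}{\p\hat v^\kappa}$ (see~\eqref{ee513}), so its space variable is $\hat x=t^{\kappa,0}$; both hierarchies are posed over the \emph{same} collection of times ${\bf t}=(t^{\beta,m})$, since the calibration $\hat\theta_{\alpha,m}$ of $S_\kappa(M)$ carries the same index set as that of~$M$. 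First I would record the symmetric form of~\eqref{principalh520}: lowering the index with $\eta$ and using the identity $\Omega_{\beta,m;\alpha,0}^{M,[0]}=\frac{\p\theta_{\beta,m+1}}{\p v^\alpha}$ from Section~\ref{section2} together with~\eqref{symmOO517} gives
\beq
\frac{\p v_\alpha}{\p t^{\beta,m}}=\p_{t^{\iota,0}}\,\Omega_{\alpha,0;\beta,m}^{M,[0]}({\bf v}),\qquad \alpha,\beta=1,\dots,n,\ m\ge0,
\eeq
and, since $\hat\eta_{\alpha\beta}=\eta_{\alpha\beta}$, the analogous identity for $S_\kappa(M)$ reads $\frac{\p\hat v_\alpha}{\p t^{\beta,m}}=\p_{t^{\kappa,0}}\Omega_{\alpha,0;\beta,m}^{S_\kappa(M),[0]}(\hat{\bf v})$.

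For the forward implication, suppose ${\bf v}({\bf t})$ solves the principal hierarchy of~$M$. Then by~\eqref{definingzerofree423} there is a genus zero free energy $\F_0^M({\bf t})$ with $\p_{t^{\alpha,m_1}}\p_{t^{\beta,m_2}}\F_0^M=\Omega_{\alpha,m_1;\beta,m_2}^{M,[0]}({\bf v}({\bf t}))$. Using $\hat v_\alpha=\Omega_{\alpha,0;\kappa,0}^{M,[0]}$ from~\eqref{defhatv59}, I would then compute
\beq
\frac{\p\hat v_\alpha}{\p t^{\beta,m}}=\frac{\p}{\p t^{\beta,m}}\,\frac{\p^2\F_0^M}{\p t^{\alpha,0}\p t^{\kappa,0}}=\frac{\p}{\p t^{\kappa,0}}\,\frac{\p^2\F_0^M}{\p t^{\alpha,0}\p t^{\beta,m}}=\p_{t^{\kappa,0}}\,\Omega_{\alpha,0;\beta,m}^{M,[0]}({\bf v}),
\eeq
the middle step being nothing but the commutativity of partial derivatives. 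Invoking Corollary~\ref{simplecor514} to replace $\Omega_{\alpha,0;\beta,m}^{M,[0]}({\bf v})$ by $\Omega_{\alpha,0;\beta,m}^{S_\kappa(M),[0]}(\hat{\bf v})$, and recalling $\hat x=t^{\kappa,0}$, the right-hand side is exactly the symmetric form of the principal hierarchy of $S_\kappa(M)$; hence $\hat{\bf v}({\bf t})$ solves it.

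The converse is proved by the same mechanism with the two manifolds interchanged: if $\hat{\bf v}({\bf t})$ solves the principal hierarchy of $S_\kappa(M)$, let $G({\bf t})$ be its genus zero free energy, whose Hessian equals $\Omega_{\alpha,m_1;\beta,m_2}^{S_\kappa(M),[0]}(\hat{\bf v})=\Omega_{\alpha,m_1;\beta,m_2}^{M,[0]}({\bf v})$ by Corollary~\ref{simplecor514}. Since $v_\alpha=\Omega_{\alpha,0;\iota,0}^{M,[0]}$, differentiating the identity $v_\alpha({\bf t})=\p^2_{t^{\alpha,0}t^{\iota,0}}G$ in $t^{\beta,m}$ and swapping derivatives yields $\frac{\p v_\alpha}{\p t^{\beta,m}}=\p_{t^{\iota,0}}\Omega_{\alpha,0;\beta,m}^{M,[0]}({\bf v})$, which is the principal hierarchy of~$M$. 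The main obstacle I anticipate is bookkeeping rather than analytic: one must keep straight that a single symmetric third-derivative tensor underlies both hierarchies, and that passing from~$M$ to $S_\kappa(M)$ only re-labels which time, $t^{\iota,0}$ or $t^{\kappa,0}$, plays the role of the space variable. Once this is pinned down, the analytic content collapses to the equality of mixed partials, while the identification of the two-point functions is furnished by Corollary~\ref{simplecor514}.
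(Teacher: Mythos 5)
Your proof is correct, but it takes a genuinely different route from the paper. The paper's own proof is a direct chain-rule computation: it expands the right-hand side of the flow $\p\hat v^\alpha/\p t^{\beta,m}$ using the Jacobian identity $\p\hat v_\alpha/\p v_\beta=c^\beta_{\kappa\alpha}$ from~\eqref{1jacobianvhatv513}, the identity $\p\hat\theta_{\gamma,m}/\p\hat v^\beta=\p\theta_{\gamma,m}/\p v^\beta$ from~\eqref{hattt517}, and associativity of the structure constants, and massages it term by term into $\sum_{\gamma,\varepsilon}\hat c^\alpha_{\varepsilon\gamma}\,\frac{\p\hat\theta_{\beta,m}}{\p\hat v_\varepsilon}\,\frac{\p\hat v^\gamma}{\p t^{\kappa,0}}$ (and symmetrically for the converse). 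You instead pass to the tau-symmetric form $\p v_\alpha/\p t^{\beta,m}=\p_{t^{\iota,0}}\Omega^{M,[0]}_{\alpha,0;\beta,m}$, invoke the existence of the genus zero free energy~\eqref{definingzerofree423} along the given solution, and reduce the whole statement to commutativity of mixed partials plus the identification of two-point functions in Corollary~\ref{simplecor514}. What your approach buys is brevity and a transparent conceptual picture: the two hierarchies share one tau-structure and differ only in which time, $t^{\iota,0}$ or $t^{\kappa,0}$, is declared to be space. What it costs is an extra hypothesis: you need the potentiality of the $\Omega$'s along the solution (i.e., the existence of $\F^{M,[0]}_0$, equivalently the total symmetry of $\p_{t^{\gamma,k}}\Omega^{M,[0]}_{\alpha,m_1;\beta,m_2}$ in all three pairs of indices), which the paper grants in Section~\ref{setupgenuszero} for the power-series solutions under consideration but which is itself established by a computation of exactly the kind the paper's direct proof performs; the paper's argument needs no such input and applies to any local solution. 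Since the lemma sits in Section~\ref{sectionapp3}, after the tau-function has been introduced, your use of~\eqref{definingzerofree423} is not circular, and both directions of your argument (including the converse, via $v_\alpha=\Omega^{M,[0]}_{\alpha,0;\iota,0}$) go through.
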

\begin{proof}
We have
\begin{align}
\frac{\p \hat v^\alpha}{\p t^{\beta,m}} 
& = \sum_{\rho=1}^n \frac{\p \hat v^\alpha}{\p v^\rho} \sum_{\gamma,\varepsilon,\sigma=1}^n \eta^{\rho\gamma} 
c_{\gamma\sigma}^\varepsilon 
\frac{\p \theta_{\beta,m}}{\p v^\varepsilon} \frac{\p v^\sigma}{\p x}
=  \sum_{\rho,\varepsilon,\sigma=1}^n  c^\alpha_{\kappa\rho} c_{\sigma\varepsilon}^{\rho} 
\frac{\p \theta_{\beta,m}}{\p v_\varepsilon} \frac{\p v^\sigma}{\p x}
\nn \\
& = \sum_{\rho,\varepsilon,\sigma=1}^n  c^\alpha_{\varepsilon\rho} c_{\sigma\kappa}^{\rho} 
\frac{\p \hat \theta_{\beta,m}}{\p \hat v_\varepsilon} \frac{\p v^\sigma}{\p x}
= \sum_{\rho,\varepsilon=1}^n  c^\alpha_{\varepsilon\rho} 
\frac{\p \hat \theta_{\beta,m}}{\p \hat v_\varepsilon} \frac{\p v^\rho}{\p t^{\kappa,0}}
= \sum_{\rho,\gamma,\varepsilon=1}^n  c^\alpha_{\varepsilon\rho} 
\frac{\p \hat \theta_{\beta,m}}{\p \hat v_\varepsilon} \frac{\p v^\rho}{\p \hat v^\gamma} \frac{\p \hat v^\gamma}{\p t^{\kappa,0}} \nn\\
&= \sum_{\gamma,\varepsilon=1}^n  \hat c^\alpha_{\varepsilon\gamma} 
\frac{\p \hat \theta_{\beta,m}}{\p \hat v_\varepsilon} \frac{\p \hat v^\gamma}{\p t^{\kappa,0}}.
\end{align}
Similarly,
\begin{align}
\frac{\p v^\alpha}{\p t^{\beta,m}} 
& = \sum_{\gamma=1}^n \frac{\p v^\alpha}{\p \hat v_\gamma} \sum_{\varepsilon,\sigma} 
\hat c_{\gamma\sigma}^\varepsilon 
\frac{\p \hat \theta_{\beta,m}}{\p \hat v^\varepsilon} \frac{\p \hat v^\sigma}{\p t^{\kappa,0}} \nn\\
& = \sum_{\gamma=1}^n \frac{\p v^\alpha}{\p \hat v_\gamma} \sum_{\varepsilon,\sigma, \lambda=1}^n 
\hat c_{\gamma\sigma}^\varepsilon 
\frac{\p \theta_{\beta,m}}{\p v^\varepsilon} \frac{\p \hat v^\sigma}{\p v^\lambda} \frac{\p v^\lambda}{\p t^{\kappa,0}} \nn\\
& = \sum_{\gamma=1}^n \frac{\p v^\alpha}{\p \hat v_\gamma} \sum_{\varepsilon, \lambda, \rho =1}^n 
 c_{\gamma\lambda}^\varepsilon 
\frac{\p \theta_{\beta,m}}{\p v^\varepsilon} c^{\lambda}_{\kappa\rho} \frac{\p v^\rho}{\p x} 
= \sum_{\gamma,\varepsilon, \lambda, \rho=1}^n \frac{\p v^\alpha}{\p \hat v_\gamma}  
 c_{\rho\lambda}^\varepsilon 
\frac{\p \theta_{\beta,m}}{\p v^\varepsilon} c^{\lambda}_{\kappa\gamma} \frac{\p v^\rho}{\p x} \nn\\
& =  \sum_{\varepsilon, \lambda, \rho=1}^n 
 \eta^{\alpha\lambda} c_{\rho\lambda}^\varepsilon 
\frac{\p \theta_{\beta,m}}{\p v^\varepsilon} \frac{\p v^\rho}{\p x} .
\end{align}
The lemma is proved.
\end{proof}

Assume that $\frac{\p}{\p v^\kappa}\cdot$ is invertible on some open set. Denote  
 by $\hat {\bf v}_{\rm top}({\bf t})$ the topological solution to the principal hierarchy of~$S_{\kappa}(M)$. 
We know from Lemma~\ref{propints521} that 
\beq\label{defvv1013}
{}^\kappa {\bf v}^M({\bf t}):=\phi \circ \hat \phi^{-1}(\hat {\bf v}_{\rm top}({\bf t}))\in\bigl(\CC[[t^{1,0}-t^1_{o,\kappa},\dots,t^{n,0}-t^n_{o,\kappa}]][[{\bf t}_{>0}]]\bigr)^n
\eeq 
satisfies the principal hierarchy of~$M$. We 
 call ${}^\kappa v^M({\bf t})$ the {\it $\kappa$th solution} to the principal hierarchy of~$M$.
 Recall that $\hat {\bf v}_{\rm top}({\bf t})$
 satisfies the following genus zero Euler--Lagrange equation:
\beq\label{hatEL0520}
\sum_{\beta=1}^n\sum_{m\ge0} \bigl(t^{\beta,m}-\delta^{\beta,\kappa} \delta^{m,1}\bigr) 
\frac{\p \hat\theta_{\beta,m}}{\p \hat v^\alpha} \bigl(\hat {\bf v}_{\rm top}({\bf t})\bigr) = 0, \quad \alpha=1,\dots,n.
\eeq
By using~\eqref{hattt517} it follows that 
\beq\label{ELkappa0521}
\sum_{\beta=1}^n\sum_{m\ge0} \bigl(t^{\beta,m}-\delta^{\beta,\kappa} \delta^{m,1}\bigr) 
\frac{\p \theta_{\beta,m}}{\p v^\alpha} \bigl({}^\kappa {\bf v}^M({\bf t})\bigr) = 0, \quad \alpha=1,\dots,n.
\eeq
Clearly, equation~\eqref{ELkappa0521} corresponds to 
$c^{\beta,m} = \delta^{\beta,\kappa}\delta^{m,1}$ ($\beta=1,\dots,n$, $m\geq0$)  
in~\eqref{EL424}, and the $\kappa$th solution to the principal hierarchy of~$M$ could be equivalently defined by using~\eqref{ELkappa0521}. 

The initial data ${}^\kappa f^M(x)$ of the $\kappa$th solution ${}^{\kappa}v^M({\bf t})$ satisfies the equality
\beq\label{inifkappa520}
{}^\kappa f^M(x) = \phi \circ \hat \phi^{-1}\bigl(t^{1}_{o, \kappa},\dots, x= t^{\iota}_{o,\kappa}+(x- t^{\iota}_{o, \kappa}),\dots, t^{n}_{o, \kappa}\bigr),
\eeq
In particular, 
the primary initials for ${}^\kappa v^M({\bf t})$ satisfy 
$t^\alpha_{o,\kappa}=\hat{v}^\alpha_{o}$, $\alpha=1,\dots,n$, 
where $\hat v^\alpha_{o}$, $\alpha=1,\dots,n$, are the coordinates of $\hat \phi (p^*)$ with $\phi(p^*)$ being the center of~$\phi(B)$.
For the case when $\kappa=\iota$ (in this case $\p_\iota=e$ is always invertible), we know that $\phi=\hat \phi$,  so 
\beq
{}^\iota v^M({\bf t}) = v^M_{\rm top}({\bf t}).
\eeq

Since ${}^\kappa v^M({\bf t})$ is a solution to the principal hierarchy of~$M$, we 
define the {\it $\kappa$th genus zero free energy $\F^{M,\kappa}_{0}({\bf t})$ of~$M$} by 
\beq\label{f0kappa1225}
\F^{M,\kappa}_{0}({\bf t}) = \frac12 \sum_{\alpha,\beta=1}^n \sum_{m_1,m_2\ge0} 
\bigl(t^{\alpha,m_1}-\delta^{\alpha,\kappa} \delta^{m_1,1}\bigr)
\bigl(t^{\beta,m_2}-\delta^{\beta,\kappa} \delta^{m_2,1}\bigr)
\Omega^{M, [0]}_{\alpha,m_1;\beta,m_2} \bigl({}^{\kappa}v({\bf t})\bigr).
\eeq
It belongs to 
$\CC[[t^{1,0} - t^1_{o,\kappa},\dots, t^{n,0} - t^n_{o,\kappa}]][[{\bf t}_{>0}]]$,
and it is the genus zero free energy of the solution ${}^\kappa v^M({\bf t})$
(see~\eqref{fm01225-general}).

\begin{prop} \label{stduality00521}
Assume that $\frac{\p}{\p v^\kappa}\cdot$ is invertible on~$B$ for $\phi(B)$ being a sufficiently small ball.
Then the $\kappa$th genus zero free energy of~$M$ equals  
the topological genus zero free energy of~$S_{\kappa}(M)$.
\end{prop}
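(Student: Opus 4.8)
The plan is to observe that both sides of the claimed equality are instances of the same quadratic expression~\eqref{fm01225-general}, evaluated on two solutions that correspond to one another under the coordinate change $\phi\circ\hat\phi^{-1}$; the equality will then follow once the relevant two-point correlation functions are matched. Recall first that for $S_\kappa(M)$ the unity vector field is $e=\frac{\p}{\p\hat v^\kappa}$ (see~\eqref{ee513}), so the role of the index $\iota$ for $S_\kappa(M)$ is played by $\kappa$. Hence the topological genus zero free energy of $S_\kappa(M)$ is the genus zero free energy of the topological solution $\hat{\bf v}_{\rm top}(\bt)$, corresponding to the constants $c^{\alpha,m}=\delta^{\alpha,\kappa}\delta^{m,1}$ in~\eqref{fm01225-general}; these are precisely the constants appearing in the definition~\eqref{f0kappa1225} of the $\kappa$th genus zero free energy $\F^{M,\kappa}_0(\bt)$.

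The key steps are then as follows. First I would invoke the definition~\eqref{defvv1013} of the $\kappa$th solution, namely ${}^\kappa{\bf v}^M(\bt)=\phi\circ\hat\phi^{-1}(\hat{\bf v}_{\rm top}(\bt))$; in the coordinate-change notation this reads ${}^\kappa{\bf v}^M(\bt)={\bf v}(\hat{\bf v}_{\rm top}(\bt))$. Next I would apply Corollary~\ref{simplecor514}, which gives
\beq
\Omega^{S_\kappa(M),[0]}_{\alpha,m_1;\beta,m_2}\bigl(\hat{\bf v}_{\rm top}(\bt)\bigr)
=\Omega^{M,[0]}_{\alpha,m_1;\beta,m_2}\bigl({\bf v}(\hat{\bf v}_{\rm top}(\bt))\bigr)
=\Omega^{M,[0]}_{\alpha,m_1;\beta,m_2}\bigl({}^\kappa{\bf v}^M(\bt)\bigr)
\eeq
for all $\alpha,\beta=1,\dots,n$ and $m_1,m_2\ge0$. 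Substituting this identity into the expression~\eqref{fm01225-general} for the topological genus zero free energy of $S_\kappa(M)$, and observing that the polynomial prefactors $(t^{\alpha,m_1}-\delta^{\alpha,\kappa}\delta^{m_1,1})(t^{\beta,m_2}-\delta^{\beta,\kappa}\delta^{m_2,1})$ are identical to those in~\eqref{f0kappa1225}, one obtains term by term that $\F^{S_\kappa(M)}_{{\rm top},0}(\bt)=\F^{M,\kappa}_0(\bt)$.

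The one point that requires care — and which I regard as the only genuine obstacle — is the bookkeeping of the time variables: I must be sure that the $(\beta,m)$th flow of the principal hierarchy of~$M$ corresponds to the $(\beta,m)$th flow of $S_\kappa(M)$ under $\phi\circ\hat\phi^{-1}$, so that the two quadratic sums are indexed in the same way and their prefactors really do match term by term. This is guaranteed by the calibration identity~\eqref{hattt517}, $\frac{\p\hat\theta_{\gamma,m}}{\p\hat v^\beta}=\frac{\p\theta_{\gamma,m}}{\p v^\beta}$, which underlies Lemma~\ref{propints521}; it ensures that $\hat{\bf v}_{\rm top}(\bt)$ and ${}^\kappa{\bf v}^M(\bt)$ solve the respective principal hierarchies with the same time parametrization, and in particular that $\hat{\bf v}_{\rm top}(\bt)$ satisfies the Euler--Lagrange equation~\eqref{hatEL0520} with the same constants as in~\eqref{ELkappa0521}. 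Granting this identification, the proof reduces to the direct substitution above.
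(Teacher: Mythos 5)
Your proposal is correct and follows essentially the same route as the paper's own (very brief) proof, which likewise combines the definition~\eqref{defvv1013} of the $\kappa$th solution, the formulas \eqref{f0kappa1225} and~\eqref{fm01225-general}, and the identity~\eqref{identitiestwopointOmega} of Corollary~\ref{simplecor514}. Your additional remarks on matching the time parametrization via~\eqref{hattt517} merely make explicit what the paper leaves implicit.
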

\begin{proof}
The proposition is proved by using \eqref{defvv1013}, \eqref{f0kappa1225}, \eqref{fm01225-general} and~\eqref{identitiestwopointOmega}.
\end{proof}

Proposition~\ref{stduality00521} reveals the space/time duality in the global form in genus zero. 

It follows either from Proposition~\ref{stduality00521} or directly from Corollary~\ref{simplecor514} the following
\begin{cor}\label{taugenuszerovv}
Let $M$ be a pre-Frobenius manifold. Assume that $\frac{\p}{\p v^\kappa}\cdot$ is invertible on~$B$ for $\phi(B)$ being a sufficiently small ball.
Let $S_\kappa(M)$ be the Legendre-type transformation given by $\frac{\p}{\p v^\kappa}$.
An arbitrary tau-function for the principal hierarchy of~$M$ is 
also a tau-function for the principal hierarchy of~$S_{\kappa}(M)$; vice versa. 
\end{cor}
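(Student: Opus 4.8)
The plan is to read off the statement directly from the two-point identification already in hand, so that essentially no new computation is required. Let $\tau^{M,[0]}(\bt)$ be an arbitrary tau-function for the principal hierarchy of~$M$, associated with a solution ${\bf v}(\bt)$ to~\eqref{principalh520} and characterized by
\beq
\frac{\p^2 \log \tau^{M,[0]}(\bt)}{\p t^{\alpha,m_1}\p t^{\beta,m_2}} = \Omega^{M,[0]}_{\alpha,m_1;\beta,m_2}({\bf v}(\bt)), \quad \alpha,\beta=1,\dots,n,\ m_1,m_2\ge0.
\eeq
First I would pass to the image solution $\hat{\bf v}(\bt):=\hat\phi\circ\phi^{-1}({\bf v}(\bt))$ on $S_\kappa(M)$. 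By Lemma~\ref{propints521} this is a solution to the principal hierarchy of~$S_\kappa(M)$ built on the same set of time variables~$\bt$, and it is the natural candidate whose tau-function should be compared with $\tau^{M,[0]}$. Since $\hat\phi\circ\phi^{-1}$ is an analytic diffeomorphism, $\hat{\bf v}(\bt)$ is a power series in the same ring, expanded around the same point of $\bt$-space.

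The central step is the identification of the two-point correlation functions. By Corollary~\ref{simplecor514},
\beq
\Omega^{S_\kappa(M),[0]}_{\alpha,m_1;\beta,m_2}(\hat{\bf v}(\bt)) = \Omega^{M,[0]}_{\alpha,m_1;\beta,m_2}\bigl({\bf v}(\hat{\bf v}(\bt))\bigr),
\eeq
and since the maps $\phi\circ\hat\phi^{-1}$ and $\hat\phi\circ\phi^{-1}$ are mutually inverse we have ${\bf v}(\hat{\bf v}(\bt)) = {\bf v}(\bt)$. Combining the last two displays gives
\beq
\frac{\p^2 \log \tau^{M,[0]}(\bt)}{\p t^{\alpha,m_1}\p t^{\beta,m_2}} = \Omega^{S_\kappa(M),[0]}_{\alpha,m_1;\beta,m_2}(\hat{\bf v}(\bt)),
\eeq
which is exactly the defining relation asserting that $\tau^{M,[0]}(\bt)$ is a tau-function for the principal hierarchy of $S_\kappa(M)$ associated with the solution~$\hat{\bf v}(\bt)$. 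Hence the one series $\tau^{M,[0]}(\bt)$ serves simultaneously on both sides. For the converse I would invoke the involutivity of the construction recorded after Theorem~A, namely that the Legendre-type transformation of $S_\kappa(M)$ given by $\frac{\p}{\p\hat v^\iota}$ returns~$M$; applying the same argument with the roles of $M$ and $S_\kappa(M)$ exchanged then yields the \emph{vice versa} direction.

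I do not anticipate a genuine obstacle, since Lemma~\ref{propints521} and Corollary~\ref{simplecor514} carry all the analytic content and the remaining points are bookkeeping. The first point to watch is the space/time relabelling: under the correspondence the unity direction, and hence the distinguished space variable, shifts from $t^{\iota,0}$ to $t^{\kappa,0}$, which is precisely the feature exploited in the proof of Lemma~\ref{propints521}; this must be respected so that the same $\bt$ parametrizes both solutions. The second is the ambiguity inherent in the notion of a tau-function, which is determined only up to the exponential of an affine-linear function of~$\bt$. Because we match the entire array of second logarithmic derivatives rather than the tau-functions themselves, this freedom is harmless and no normalization discrepancy arises.
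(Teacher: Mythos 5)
Your argument is correct and is exactly the route the paper intends: the paper derives this corollary "directly from Corollary~\ref{simplecor514}" (together with the solution correspondence of Lemma~\ref{propints521}), which is precisely the identification of two-point functions you spell out. Your elaboration of the bookkeeping (same time variables $\bt$, the shift of the distinguished space variable to $t^{\kappa,0}$, and the affine-linear ambiguity of $\log\tau$) is accurate and fills in what the paper leaves implicit.
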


We will proceed to the case when $M$ is a semisimple Frobenius manifold.
Before proceeding let us prove the following lemma.
\begin{lemma}\label{monotonicitylemma}
Assume that $\frac{\p}{\p v^\kappa}\cdot$ is invertible on the open set~$B$. Then 
the $\kappa$th solution ${}^{\kappa}v^M({\bf t})$ to the principal hierarchy~\eqref{principalh520} of~$M$ is monotone. 
\end{lemma}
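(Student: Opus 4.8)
The plan is to evaluate the monotonicity vector of ${}^\kappa v^M(\bt)$ explicitly and to recognize it as the multiplicative inverse of $\frac{\p}{\p v^\kappa}$, which is invertible by hypothesis. Recall that the space variable for the principal hierarchy of $M$ is $x=t^{\iota,0}$ (cf.~\eqref{ix1225}), and that the evaluation point $\bt=(t^1_{o,\kappa},\dots,t^n_{o,\kappa},0,0,\dots)$ is precisely $p^*$, the center of $\phi(B)$, since the primary initials for ${}^\kappa v^M$ are $t^\alpha_{o,\kappa}=\hat v^\alpha_o$. Thus I must show that
$$
w:=\sum_{\alpha=1}^n \frac{\p\,{}^\kappa v^{M,\alpha}(\bt)}{\p t^{\iota,0}}\bigg|_{\bt=(t^1_{o,\kappa},\dots,t^n_{o,\kappa},0,0,\dots)}\frac{\p}{\p v^\alpha}
$$
is an invertible element of the Frobenius algebra $T_{p^*}M$.

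First I would compute $w$ from the definition ${}^\kappa v^{M,\alpha}(\bt)=v^\alpha(\hat{\bf v}_{\rm top}(\bt))$ by the chain rule,
$$
\frac{\p\,{}^\kappa v^{M,\alpha}(\bt)}{\p t^{\iota,0}}=\sum_{\rho=1}^n \frac{\p v^\alpha}{\p\hat v^\rho}\,\frac{\p\hat v^\rho_{\rm top}(\bt)}{\p t^{\iota,0}}.
$$
Since $S_\kappa(M)$ is a calibrated pre-Frobenius manifold (Proposition~\ref{propprefm510-2}), Lemma~\ref{toporiemann521} applies to it and gives $\hat v^\rho_{\rm top}(\bt)|_{{\bf t}_{>0}={\bf 0}}\equiv t^{\rho,0}$; differentiating this identity yields $\frac{\p\hat v^\rho_{\rm top}}{\p t^{\iota,0}}=\delta^\rho_\iota$ on the whole locus ${\bf t}_{>0}={\bf 0}$, in particular at $p^*$. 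All terms but $\rho=\iota$ then drop out, and I obtain
$$
w=\sum_{\alpha=1}^n \frac{\p v^\alpha}{\p\hat v^\iota}\bigg|_{p^*}\frac{\p}{\p v^\alpha}=\frac{\p}{\p\hat v^\iota}\bigg|_{p^*},
$$
i.e. the monotonicity vector is simply the $\iota$-th coordinate vector field of $S_\kappa(M)$ at $p^*$.

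It then remains to show that $\frac{\p}{\p\hat v^\iota}$ is invertible in $T_{p^*}M$. The key computation uses Lemma~\ref{anotherusefullemma}, together with the linear relations $\frac{\p}{\p\hat v^\iota}=\sum_\beta \eta_{\iota\beta}\frac{\p}{\p\hat v_\beta}$ and $\frac{\p}{\p v^\iota}=\sum_\beta\eta_{\iota\beta}\frac{\p}{\p v_\beta}$ coming from raising indices with $\eta$:
$$
\frac{\p}{\p v^\kappa}\cdot\frac{\p}{\p\hat v^\iota}=\sum_{\beta=1}^n \eta_{\iota\beta}\,\frac{\p}{\p v^\kappa}\cdot\frac{\p}{\p\hat v_\beta}=\sum_{\beta=1}^n\eta_{\iota\beta}\,\frac{\p}{\p v_\beta}=\frac{\p}{\p v^\iota}=e.
$$
Hence $w=\frac{\p}{\p\hat v^\iota}$ is a multiplicative inverse of $\frac{\p}{\p v^\kappa}$; since $\frac{\p}{\p v^\kappa}\cdot$ is invertible on $B$ by assumption, $w$ is an invertible element of $T_{p^*}M$, which is exactly the asserted monotonicity of ${}^\kappa v^M(\bt)$.

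I do not expect a genuine obstacle: the argument is a direct chain-rule computation. The only points requiring care are the index bookkeeping (raised versus lowered indices under $\eta$) and the insistence that the derivative be evaluated at the center $p^*$, equivalently $\hat\phi(p^*)$ with coordinates $t^\alpha_{o,\kappa}=\hat v^\alpha_o$, where the topological-solution identity forces $\frac{\p\hat v^\rho_{\rm top}}{\p t^{\iota,0}}=\delta^\rho_\iota$. The conceptual heart of the proof is the identity $\frac{\p}{\p v^\kappa}\cdot\frac{\p}{\p\hat v^\iota}=e$, which exhibits the monotonicity vector as $\bigl(\frac{\p}{\p v^\kappa}\bigr)^{-1}$ in the Frobenius algebra.
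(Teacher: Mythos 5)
Your proof is correct, and it reaches the same conclusion as the paper by a noticeably more direct route. The paper starts from the $\p_{t^{\kappa,0}}$-flow of the principal hierarchy, $\p_{t^{\kappa,0}}({}^{\kappa}v^{\alpha,M})=\sum_\beta c^{\alpha}_{\kappa\beta}\,\p_x({}^{\kappa}v^{\beta,M})$, evaluates it at the base point, inverts the matrix $c^\alpha_{\kappa\beta}$ to solve for the monotonicity vector $\xi$, and then verifies invertibility by checking that $\xi\cdot\frac{\p}{\p v^\rho}=\frac{\p}{\p \hat v^\rho}$ for every $\rho$, so that multiplication by $\xi$ carries one coordinate frame to another and Lemma~\ref{invertible} applies. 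You instead bypass the flow equation entirely: applying Lemma~\ref{toporiemann521} to $S_\kappa(M)$ and the chain rule to ${}^{\kappa}{\bf v}^M=\phi\circ\hat\phi^{-1}(\hat{\bf v}_{\rm top})$ immediately identifies the monotonicity vector as $\frac{\p}{\p\hat v^\iota}\big|_{p^*}$, and the single identity $\frac{\p}{\p v^\kappa}\cdot\frac{\p}{\p\hat v^\iota}=e$ (a direct consequence of Lemma~\ref{anotherusefullemma}) exhibits it as the multiplicative inverse of $\frac{\p}{\p v^\kappa}$. The two answers agree: the paper's $\xi$ equals $(\frac{\p}{\p v^\kappa})^{-1}\cdot(\frac{\p}{\p v^\kappa})^{-1}\cdot\frac{\p}{\p v^\kappa}=(\frac{\p}{\p v^\kappa})^{-1}$, which is exactly your $w$. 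What your version buys is the explicit conceptual identification of the monotonicity vector as $(\frac{\p}{\p v^\kappa})^{-1}$ and a shorter index computation; what the paper's version buys is that it simultaneously records the restricted flow relation~\eqref{phkappa0restrict}, which is reused later in the proof of Theorem~\ref{mainthm1229} to convert between jet representations.
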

\begin{proof}
From the definition of the principal hierarchy of~$M$ we know that 
\beq\label{phkappa0}
\frac{\p ({}^{\kappa}v^{\alpha,M}({\bf t}))}{\p t^{\kappa,0}} 
= \sum_{\beta=1}^n c^{\alpha}_{\kappa\beta}({}^{\kappa}{\bf v}^M({\bf t})) \frac{\p ({}^{\kappa}v^{\beta,M}({\bf t}))}{\p x} \,.
\eeq
Taking $t^{\alpha,0}=t^\alpha_{o,\kappa}$ and $t^{\alpha,k}=0$ ($k\ge1$), $\alpha=1,\dots,n$, on both sides of~\eqref{phkappa0}, 
we find 
\beq\label{phkappa0restrict}
\frac{\p v^{\alpha}}{\p \hat v^\rho}\bigg|_{\hat v=\hat v_o}  \delta^\rho_\kappa
= c^{\alpha}_{\kappa\beta}\bigl(\phi \circ \hat \phi^{-1}(\hat {\bf v}_o)\bigr) 
\frac{\p ({}^{\kappa}v^{\beta,M}({\bf t}))}{\p x}\bigg|_{t^{\alpha,0}=t^\alpha_{o,\kappa}, \, t^{\alpha,k}=0\, (k\ge1), \, \alpha=1,\dots,n} \,.
\eeq
So 
\begin{align}
\xi^\beta:=\frac{\p ({}^{\kappa}v^{\beta,M}({\bf t}))}{\p x}\bigg|_{t^{\alpha,0}=t^\alpha_{o,\kappa}, \, t^{\alpha,k}=0\, (k\ge1), \, \alpha=1,\dots,n} = 
\sum_{\alpha=1}^n \frac{\p v^{\alpha}}{\p \hat v^\kappa}\bigg|_{\hat {\bf v}=\hat {\bf v}_o} \frac{\p v^\beta}{\p \hat v^\alpha}\bigg|_{\hat {\bf v}=\hat {\bf v}_o}.
\end{align}
Let $\xi=\sum_{\beta=1}^n \xi^\beta \frac{\p}{\p v^\beta}$. Then 
\begin{align}
\xi \cdot \frac{\p}{\p v^\rho} 
& = \sum_{\alpha,\beta, \sigma=1}^n \frac{\p v^{\alpha}}{\p \hat v^\kappa}\bigg|_{\hat {\bf v}=\hat {\bf v}_o} 
\frac{\p v^\beta}{\p \hat v^\alpha}\bigg|_{\hat {\bf v}=\hat {\bf v}_o} 
c^\sigma_{\rho\beta}|_{{\bf v}={\bf v}_o} \frac{\p}{\p v^\sigma}
= \sum_{\alpha, \sigma=1}^n \frac{\p v^{\alpha}}{\p \hat v^\kappa}\bigg|_{\hat {\bf v}=\hat {\bf v}_o} 
\hat c^\sigma_{\rho\alpha}|_{\hat {\bf v}= \hat {\bf v}_o} \frac{\p}{\p v^\sigma} \nn\\
& = \sum_{\alpha, \beta, \sigma=1}^n \frac{\p v^{\alpha}}{\p \hat v^\kappa}\bigg|_{\hat {\bf v}=\hat {\bf v}_o} 
\frac{\p v^\beta}{\p \hat v^\rho}\bigg|_{\hat {\bf v}=\hat {\bf v}_o} 
c^\sigma_{\beta\alpha}|_{{\bf v}={\bf v}_o} \frac{\p}{\p v^\sigma}
= \sum_{\beta, \sigma=1}^n 
\frac{\p v^\beta}{\p \hat v^\rho}\bigg|_{\hat {\bf v}=\hat {\bf v}_o} 
\hat c^\sigma_{\beta\kappa}|_{\hat {\bf v}=\hat {\bf v}_o} \frac{\p}{\p v^\sigma}\nn\\
& = \sum_{\sigma=1}^n 
\frac{\p v^\sigma}{\p \hat v^\rho}\bigg|_{\hat {\bf v}=\hat {\bf v}_o}  \frac{\p}{\p v^\sigma}.
\end{align}
The monotonicity is then implied by Lemma~\ref{invertible}.
\end{proof}

We now consider the case when $M$ is a semisimple Frobenius manifold.
Using Lemma~\ref{monotonicitylemma}, we can 
define the $\kappa$th genus $g$ ($g\ge1$) free energy $\F^{M,\kappa}_{g}({\bf t})$ of~$M$ by  
\beq\label{jet1225}
\F^{M,\kappa}_{g}({\bf t}) = F^M_g|_{{\bf v}_m\mapsto \p_x^m({}^{\kappa}{\bf v}^M({\bf t})), \, m\ge0}.
\eeq
We call 
\beq
\F^{M,\kappa}({\bf t};\e) = \sum_{g\ge0} \e^{2g-2} \F^{M,\kappa}_{g}({\bf t})
\eeq
the {\it $\kappa$th free energy of~$M$}, and call
\beq\label{genuszmkappa1225}
Z^M_\kappa({\bf t};\e) = \exp \F^{M,\kappa}({\bf t};\e)
\eeq
the {\it $\kappa$th partition function of~$M$}. 
Obviously, the $\iota$th free energy $\F^{M,\iota}({\bf t})$ 
is nothing but 
the topological free energy of~$M$, and 
 the $\iota$th partition function of~$M$ is the 
topological partition function of~$M$.

In the following proposition we list more properties for the $\kappa$th partition function of~$M$.
\begin{prop}
The $\kappa$th partition function of~$M$ has the following properties:
\begin{itemize}
\item[i)] It satisfies the Virasoro constraints:
\beq\label{virakappa1225}
L_i \bigl(Z^M_\kappa(\bt;\e)\bigr) = 0, \quad \tilde t^{\alpha,m}=t^{\alpha,m}-\delta^{\alpha,\kappa}\delta^{m,1}, \, i\geq-1.
\eeq 
\item[ii)] It satisfies the dilaton equation:
\beq\label{dilatonkappa1225}
\sum_{\alpha=1}^n \sum_{m\ge0} \bigl(t^{\alpha,m}-\delta^{\alpha,\kappa}\delta^{m,1}\bigr)  \frac{\p Z_\kappa^M(\bt;\e)}{\p t^{\alpha,m}} 
+ \e \frac{\p Z_\kappa^M(\bt;\e)}{\p \e}+ \frac{n}{24} Z_\kappa^M(\bt;\e) = 0.
\eeq
\item[iii)] It is a tau-function for the Dubrovin--Zhang hierarchy associated to~$M$. 
\item[iv)] Its logarithm has the genus expansion (cf.~\eqref{genuszmkappa1225}).
\item[v)] The genus zero part has the expression~\eqref{f0kappa1225}.
\item[vi)] The genus~$g$ ($g\ge1$) part has the jet-variable representation (cf.~\eqref{jet1225}). 
\end{itemize}
\end{prop}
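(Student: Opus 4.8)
The plan is to identify $Z^M_\kappa$ with the partition function of~$M$ associated to one distinguished solution of the principal hierarchy, and then to deduce the six properties from the Dubrovin--Zhang theory recalled in Section~\ref{appa86}. The starting point is \eqref{ELkappa0521}: the $\kappa$th solution ${}^\kappa{\bf v}^M(\bt)$ is the Euler--Lagrange solution \eqref{EL424} attached to the constants $c^{\beta,m}=\delta^{\beta,\kappa}\delta^{m,1}$. By Lemma~\ref{propints521} it solves the principal hierarchy of~$M$, and by Lemma~\ref{monotonicitylemma} it is monotone. Thus, by the very definitions \eqref{f0kappa1225}, \eqref{jet1225}, \eqref{genuszmkappa1225}, the series $Z^M_\kappa$ is precisely the partition function of~$M$ associated to ${}^\kappa{\bf v}^M(\bt)$ with the shift $\delta^{\beta,\kappa}\delta^{m,1}$.

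Granting this identification, properties (iv), (v), (vi) are immediate. Indeed (iv) is the definition \eqref{genuszmkappa1225} of the genus expansion of $\log Z^M_\kappa$; (v) is the definition \eqref{f0kappa1225} of its genus zero part; and (vi) is the definition \eqref{jet1225} of the higher-genus parts, where I would stress that it is exactly the monotonicity of ${}^\kappa{\bf v}^M(\bt)$ (Lemma~\ref{monotonicitylemma}) that makes the substitution of its jets into the differential-polynomial/rational functions $F^M_g$ legitimate. For (iii) I would substitute the monotone solution ${}^\kappa{\bf v}^M(\bt)$ into the quasi-Miura transformation \eqref{qm1225}: this produces a solution ${\bf u}^{\rm norm}(\bt;\e)$ of the Dubrovin--Zhang hierarchy of~$M$, of which $Z^M_\kappa$ is the tau-function by the construction recalled around \eqref{defiOmega}.

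The substantive content lies in (i) and (ii). First I would treat the genus zero order: the genus zero Virasoro constraints \eqref{virasorogenus00524} and the genus zero dilaton equation \eqref{dilaton0423} are recorded in Section~\ref{appa86} for the series $\F^M_0$ attached to an arbitrary Euler--Lagrange solution, so their specialization to $c^{\beta,m}=\delta^{\beta,\kappa}\delta^{m,1}$ gives the leading $\e$-order of \eqref{virakappa1225} and \eqref{dilatonkappa1225}. For all higher orders I would invoke the loop equation of Theorem~B: the functions $F^M_g$ are its unique solutions, and the loop equation is precisely the function-level encoding of the all-genus Virasoro constraints and dilaton equation, in which the constants $c^{\beta,m}$ enter only through the creation operators \eqref{creation_annihilation}. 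Since the operators $L_i$ are themselves built solely from the shared data $\mu,R,\eta$, the entire passage from the topological theory to the $\kappa$th theory amounts to replacing the topological shift $\delta^{\beta,\iota}\delta^{m,1}$ by $\delta^{\beta,\kappa}\delta^{m,1}$, i.e.\ to the occurrence of $\tilde t^{\alpha,m}=t^{\alpha,m}-\delta^{\alpha,\kappa}\delta^{m,1}$ in \eqref{virakappa1225}--\eqref{dilatonkappa1225}; this is exactly the assertion at the end of Section~\ref{appa86} that any Euler--Lagrange solution partition function satisfies \eqref{virazM1213}, \eqref{dilatonzM1213} with the corresponding shift.

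The hard part will be (i) in positive genus: unlike (iv)--(vi) it does not follow from the definitions but from the full Dubrovin--Zhang machinery, namely the loop equation \eqref{loopM1208} together with the uniqueness in Theorem~B, plus the verification that shifting the linear times by $c^{\beta,m}$ conjugates the topological Virasoro operators into those with shifted creation operators while leaving $L_i$ intact. For (ii) the remaining point is softer: I would pass from the jet-level dilaton equation \eqref{dilatonjet1208} to the time-dependent dilaton equation \eqref{dilatonkappa1225} using the homogeneity of the functions $\theta_{\beta,m}$ under the dilaton vector field together with the calibration normalization \eqref{thetacali512}, which I expect to be routine once the genus zero dilaton equation \eqref{dilaton0423} is granted.
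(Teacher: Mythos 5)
Your proposal is correct and follows essentially the same route as the paper: properties iv)--vi) are definitional, i) and ii) are reduced via the jet-variable representation to the loop equation \eqref{loopM1208} and the jet-level dilaton equation \eqref{dilatonjet1208} (with monotonicity of ${}^\kappa{\bf v}^M(\bt)$ from Lemma~\ref{monotonicitylemma} justifying the substitution), and iii) is obtained by recognizing $Z^M_\kappa$ as the tau-function of the solution of the Dubrovin--Zhang hierarchy produced from ${}^\kappa{\bf v}^M(\bt)$ by the quasi-Miura transformation \eqref{qm1225}. The only cosmetic difference is that you separate out the genus-zero order of i) and ii) explicitly, which the paper subsumes into the loop-equation argument.
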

\begin{proof}
The properties iv), v), vi) are true by definition. Using the property~vi) 
we find that \eqref{virakappa1225} can be recast to the same loop equation as~\eqref{loopM1208}, which 
holds by the definition~\eqref{jet1225}. This proves~i). In a similar way we find that~ii) is true. To show~iii), first, 
we note that 
\beq
{}^{\kappa}{\bf u}^M({\bf t};\e):= {}^{\kappa} {\bf v}^M({\bf t}) + \sum_{g\geq1} \e^{2g} \frac{\p^2 \F^{M,\kappa}_{g}(\bt;\e)}{\p x^2}
\eeq
is a particular solution~\cite{Du14, DZ-norm} to the Dubrovin--Zhang hierarchy of~$M$. Secondly, 
the derivatives 
\begin{align}
\e^2 \frac{\p^2 \F^{M,\kappa}(\bt;\e)}{\p t^{\alpha,m_1} \p t^{\beta, m_2}} 
&= \Omega_{\alpha,m_1;\beta,m_2}^{M, [0]}\bigl({}^{\kappa}{\bf v}({\bf t})\bigr) + \sum_{g\ge1} \e^{2g} \frac{\p^2 \F^{M,\kappa}_{g}(\bt)}{\p t^{\alpha,m_1} \p t^{\beta, m_2}}\nn\\
&= \Omega_{\alpha,m_1;\beta,m_2}^{M, [0]}\bigl({}^{\kappa}{\bf v}({\bf t})\bigr) 
+ \sum_{g\ge1} \e^{2g} \frac{\p^2 (F^M_g|_{{\bf v}_m\mapsto \p_x^m({}^{\kappa}{\bf v}^M({\bf t})),m\ge0})}{\p t^{\alpha,m_1} \p t^{\beta, m_2}} \nn\\
&= \Omega^M_{\alpha,m_1; \beta, m_2}\big|_{u_{\rho, \ell} \, \mapsto \, \frac{\p^\ell ({}^{\kappa} u^M_\rho({\bf t};\e))}{\p (t^{\iota,0})^\ell}, \, \ell\ge0, \, \rho=1,\dots,n}  . \nn
\end{align}
Here in the last equality we first substitute the inverse of the quasi-Miura map~\eqref{qm1225} then take $u_\rho$ to be ${}^{\kappa}u^M_\rho({\bf t};\e)$.
The proposition is proved.
\end{proof}

We establish in the following theorem the relationship between the $\kappa$th partition function of~$M$ 
and the topological partition function of~$S_\kappa(M)$. 

\begin{theorem}\label{mainthm1229}
The following identity holds true:
\beq
Z^M_{\kappa}({\bf t}) = Z^{S_\kappa(M)}_{\rm top}(\bt).
\eeq
\end{theorem}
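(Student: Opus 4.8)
The plan is to establish the identity genus by genus, i.e.\ to prove $\F^{M,\kappa}_g(\bt)=\F^{S_\kappa(M)}_{{\rm top},g}(\bt)$ for every $g\ge0$ and then exponentiate. The genus zero case is already Proposition~\ref{stduality00521}: combining Corollary~\ref{simplecor514} with the identity ${}^\kappa{\bf v}^M(\bt)={\bf v}(\hat{\bf v}_{\rm top}(\bt))$ turns the defining sum~\eqref{f0kappa1225} for $\F^{M,\kappa}_0$ into the topological genus zero free energy of $S_\kappa(M)$, since the latter carries exactly the dilaton shift $\delta^{\alpha,\kappa}\delta^{m,1}$ (recall $e=\p/\p\hat v^\kappa$ for $S_\kappa(M)$). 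So the $\e^{-2}$ coefficients already match.

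The engine for the higher genera is the following observation. By Theorem~\ref{thm2519} the pair $(\mu,R)$ is shared by $M$ and $S_\kappa(M)$, and by~\eqref{flatmetricnew2513} their invariant metrics have the same Gram matrix $\eta_{\alpha\beta}$ in the respective flat frames. Hence the creation/annihilation operators~\eqref{creation_annihilation}, the matrix $G^{\alpha\beta}(\nu)$, and therefore the Virasoro operators assembled in~\eqref{viralm0922}---taken with the common dilaton shift $c^{\alpha,m}=\delta^{\alpha,\kappa}\delta^{m,1}$---are literally the same for the two Frobenius manifolds. Consequently the Virasoro constraints~\eqref{virakappa1225} satisfied by $Z^M_\kappa$ and the constraints~\eqref{virazM1213} satisfied by $Z^{S_\kappa(M)}_{\rm top}$ are one and the same system, and likewise the two dilaton equations~\eqref{dilatonkappa1225} and~\eqref{dilatonzM1213} coincide. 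I would then conclude by uniqueness: both $Z^M_\kappa$ and $Z^{S_\kappa(M)}_{\rm top}$ are formal series in $\bt$ whose logarithms admit a genus expansion, they satisfy the same Virasoro and dilaton constraints, and their genus zero parts agree by the first paragraph; by the Dubrovin--Zhang reconstruction (Theorem~B, according to which the loop equation has a unique solution) such data determine the partition function uniquely.

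The hard part will be making this uniqueness airtight, because the two sides carry their jet representations with respect to \emph{different} space variables: the genus~$g$ piece of $Z^M_\kappa$ is $F^M_g$ evaluated on $x$-jets of ${}^\kappa{\bf v}^M$ with $x=t^{\iota,0}$ (see~\eqref{jet1225}), whereas the genus~$g$ piece of $Z^{S_\kappa(M)}_{\rm top}$ is $F^{S_\kappa(M)}_g$ evaluated on $\hat x$-jets with $\hat x=t^{\kappa,0}$. This is precisely the space--time exchange built into the Legendre-type transformation, and it is the genuine obstacle to invoking Theorem~B verbatim, since that uniqueness is phrased in jet variables tied to one chosen space direction.

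To close the gap I would pursue one of two routes. The first is to show directly that the loop equation~\eqref{loopM1208}--\eqref{dilatonjet1208} is covariant under the Legendre change of flat coordinates, checking that each ingredient ($\mathcal{U}$, $\mathcal{V}$, $G^{\alpha\beta}$, the periods $p_\alpha$, and the tensors $\nabla(\p p_\alpha/\p\lambda)$) is intrinsic---using $\hat{\mathcal{U}}=\mathcal{U}$, $\hat V=V$ from~\eqref{VVequal314}, and~\eqref{identitiestwopointOmega}---so that $F^M_g$ and $F^{S_\kappa(M)}_g$ are one intrinsic function written in the two coordinate systems; the covariance of the period terms $p_\alpha$ is where the real work lies. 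The second route bypasses jets altogether and feeds the coinciding data $\tau_I,\Theta,\Psi,\Phi,U$ (equal by Lemma~\ref{psipsihatlemma}, Theorem~\ref{maintheorem} and the corollary $\Phi=\hat\Phi$, together with $\hat\tau_I=\tau_I$ via~\eqref{isotaueq}) into Givental's formula, so that the two quantizations act on the same Witten--Kontsevich inputs with matching dilaton shifts. Either way the theorem reduces to bookkeeping, but verifying covariance of the period data in the first route, or the correct matching of Darboux/time variables under the two metrics in the second, is the technical crux.
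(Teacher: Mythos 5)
Your proposal follows essentially the same route as the paper: identical Virasoro operators (since they depend only on $\eta,\mu,R$, shared by Theorem~\ref{thm2519}), the matching dilaton shifts, genus-zero agreement via Proposition~\ref{stduality00521}, and uniqueness from the Dubrovin--Zhang loop equation, with your second route (Givental quantization) being literally the paper's second proof. The only point to add is that the "crux" you flag is closed more cheaply than you anticipate: the paper does not verify covariance of the period data, but simply uses~\eqref{defvv1013} and~\eqref{phkappa0} (with the invertibility of $c^\alpha_{\kappa\beta}$) to convert the $x$-jet representation of the genus-$g$ part of $\log Z^M_\kappa$ into a $t^{\kappa,0}$-jet representation in $\hat{\bf v}$, after which the shared Virasoro constraints recast into the $S_\kappa(M)$ loop equation and Theorem~B's uniqueness finishes the argument.
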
 
\begin{proof}
By the construction of Dubrovin--Zhang (see~\cite{DZ-norm} or e.g. Section~\ref{DZapproachA}) and Theorem~\ref{thm2519}, we find that 
the partition function $Z^{S_\kappa(M)}_{\rm top}(\bt)$ satisfies the same Virasoro constraints~\eqref{virakappa1225} 
as $Z^M_{\kappa}(\bt)$ does and 
the same dilaton equation~\eqref{dilatonkappa1225}. The logarithm of $Z^{S_\kappa(M)}_{\rm top}(\bt)$ also has the genus expansion, whose 
genus zero part, according to Proposition~\ref{stduality00521}, 
 coincides with that of $\log Z^M_{\kappa}(\bt)$. 
 Using~\eqref{defvv1013} and using~\eqref{phkappa0}, noticing that $c^{\alpha}_{\kappa\beta}({\bf v})$ is invertible within~$B$, we find that 
 the jet-variable representation~\eqref{jet1225} in~${\bf v}$ with $x$-derivatives of
  the genus $g$ ($g\geq1$) part of~$\log Z^M_{\kappa}(\bt)$ can be transformed to the jet-variable representation in~$\hat {\bf v}$ with 
  $\p_{t^{\kappa,0}}$-derivatives. 
 The theorem is proved by the uniqueness of the loop equation (see~\cite{DZ-norm} or Section~\ref{DZapproachA}).
\end{proof}

An essential point for the beginning of the above proof is that by Theorem~\ref{thm2519} the Virasoro operators,  
 whose definition (see~\cite{DZ-norm, EHX, Givental, LiuT} or see \eqref{creation_annihilation}--\eqref{viralm0922})
 uses only
$\eta, \mu, R$, are identical for $M$ and for $S_\kappa(M)$. This identification does not require semisimplicity.
We also note that in the above proof an alternative equation for~\eqref{phkappa0} to transform the jet-variable representation is the following:
 \beq\label{phkappa0iota}
 \frac{\p \hat v^\alpha}{\p x} = \sum_{\beta=1}^n \hat c^{\alpha}_{\iota\beta}(\hat {\bf v}) \frac{\p \hat v^\beta}{\p t^{\kappa,0}}, \quad \alpha=1,\dots,n.
 \eeq
 
We have the following simple corollary of Theorem~\ref{mainthm1229}.
\begin{cor}\label{thiscorollary}
For each $g\ge1$, we have
\beq\label{FMgFSMg}
F_g^M({\bf v}, {\bf v}_1, \dots, {\bf v}_{3g-2})=F^{S_\kappa(M)}_g(\hat {\bf v}, \hat {\bf v}_1, \dots, \hat {\bf v}_{3g-2}),
\eeq
where $({\bf v}_k)_{k\ge0}$ and $(\hat {\bf v}_k)_{k\ge0}$ are related iterately using~\eqref{defhatv59} 
and either~\eqref{phkappa0} or~\eqref{phkappa0iota}. Here we note that ${\bf v}_k=\p_{t^{\iota,0}}^k({\bf v})$, 
$\hat {\bf v}_k=\p_{t^{\kappa,0}}^k({\bf v})$, $k\ge0$.
\end{cor}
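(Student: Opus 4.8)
The plan is to obtain the functional identity~\eqref{FMgFSMg} by reading Theorem~\ref{mainthm1229} order by order in the genus parameter and then promoting the resulting equalities of functions of~$\bt$ to equalities of jet functions. First I would expand both sides of $Z^M_\kappa(\bt)=Z^{S_\kappa(M)}_{\rm top}(\bt)$ in~$\e$ and compare the coefficient of $\e^{2g-2}$, which gives $\F^{M,\kappa}_g(\bt)=\F^{S_\kappa(M)}_{{\rm top},g}(\bt)$ for every $g\ge1$. By definition~\eqref{jet1225} the left-hand side equals $F^M_g$ evaluated on the $x$-jets $\p_{t^{\iota,0}}^m\bigl({}^\kappa{\bf v}^M(\bt)\bigr)$, while Theorem~B applied to the semisimple Frobenius manifold $S_\kappa(M)$ --- for which the space variable is $t^{\kappa,0}$, since there $e=\frac{\p}{\p\hat v^\kappa}$ --- identifies the right-hand side with $F^{S_\kappa(M)}_g$ evaluated on the jets $\p_{t^{\kappa,0}}^m\bigl(\hat{\bf v}_{\rm top}(\bt)\bigr)$.

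Second, I would match the two jet towers. By~\eqref{defvv1013} the solutions ${}^\kappa{\bf v}^M(\bt)$ and $\hat{\bf v}_{\rm top}(\bt)$ are the same $\bt$-dependent curve written in the two coordinate systems, so their zeroth jets are related precisely by the Legendre change~\eqref{defhatv59}. To pass to higher jets one must convert $t^{\iota,0}$-derivatives into $t^{\kappa,0}$-derivatives: the primary $t^{\kappa,0}$-flow of the principal hierarchy acts as multiplication by $\frac{\p}{\p v^\kappa}$ on $\p_x{\bf v}$, which is exactly~\eqref{phkappa0} (equivalently, on $S_\kappa(M)$ the $x=t^{\iota,0}$ direction acts by multiplication with $\frac{\p}{\p\hat v^\iota}$, which is~\eqref{phkappa0iota}). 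Since the principal flows commute, $\p_{t^{\kappa,0}}$ and $\p_{t^{\iota,0}}$ commute, and applying the chain rule together with~\eqref{defhatv59} and~\eqref{phkappa0} repeatedly expresses each $\hat{\bf v}_m$ as a differential polynomial in ${\bf v},{\bf v}_1,\dots,{\bf v}_m$ and the Legendre data --- this is precisely the iterative rule asserted in the statement. Hence~\eqref{FMgFSMg} holds whenever the jets are the ones realized by ${}^\kappa{\bf v}^M(\bt)$.

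Third, I would upgrade this to a genuine identity of jet functions. By Theorem~B each of $F^M_g$ and $F^{S_\kappa(M)}_g$ is rational in the first jet and polynomial in ${\bf v}_2,\dots,{\bf v}_{3g-2}$, so it suffices to know that the map $\bt\mapsto\bigl({}^\kappa{\bf v}^M,\p_x{}^\kappa{\bf v}^M,\dots,\p_x^{3g-2}{}^\kappa{\bf v}^M\bigr)$ has open image: two rational functions agreeing on a non-empty open set coincide. The zeroth jet already ranges over $\phi(B)$ as the primary times vary (cf.\ Lemma~\ref{toporiemann521}), and switching on the descendent times $t^{\beta,m}$, $m\ge1$, deforms the higher jets; the triangular way in which each $t^{\beta,m}$ enters the jets --- the same structure that makes the jet representation of $F^M_g$ well defined in Theorem~B --- shows the jet-evaluation map is a submersion at a suitable base point. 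An equivalent, genericity-free route is to observe that, because $M$ and $S_\kappa(M)$ share $\mathcal{U}$, $V$, $\mu$, $R$, $G^{\alpha\beta}$ and $\tau_I$, the loop equation~\eqref{loopM1208}--\eqref{dilatonjet1208} for $S_\kappa(M)$ is literally that of~$M$ rewritten in the $\hat{\bf v}$-coordinates, so its unique solution is $F^M_g$ transported by the Legendre change.

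The main obstacle is exactly this last genericity (or covariance) step: everything before it is the bookkeeping of Lemma~\ref{propints521} and the chain rule, whereas one genuinely has to certify that the jets swept out by the topological solution fill an open set, or else verify the full coordinate-covariance of the loop equation. A minor point to keep in mind is the additive-constant ambiguity of $F^M_1$ and $F^{S_\kappa(M)}_1$; the exact equality in Theorem~\ref{mainthm1229} fixes compatible normalizations, so~\eqref{FMgFSMg} holds on the nose at $g=1$ as well.
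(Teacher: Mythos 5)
Your proposal is correct, and your final ``genericity-free route'' --- that the loop equation of $S_\kappa(M)$ is the loop equation of $M$ transported by the Legendre change (since $\mathcal{U}$, $V$, $\mu$, $R$, $G^{\alpha\beta}$ and $\tau_I$ are shared), so the uniqueness in Theorem~B forces $F^{S_\kappa(M)}_g$ to be the transported $F^M_g$ --- is precisely the argument the paper relies on: the corollary carries no separate proof, and the proof of Theorem~\ref{mainthm1229} concludes by transforming the jet representation via~\eqref{defvv1013} and~\eqref{phkappa0} and invoking ``the uniqueness of the loop equation''. Your alternative genericity argument would also work in principle, but, as you yourself flag, it requires certifying that the jets swept out by the topological solution fill an open set --- a step the paper sidesteps entirely by using loop-equation uniqueness.
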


For the particular example when $M$ is the $\mathbb{P}^1$ Frobenius manifold (cf.~Example~\ref{exampleA11}),  
Corollary~\ref{thiscorollary} can also be deduced from~\cite{Du09, Y2} (cf.~\cite{CDZ, CvdLPS, CvdLPS2, FY, FYZ, Y1}). 

Let us illustrate the special $g=1$ case of Corollary~\ref{thiscorollary} by means of examples. 
In Example~\ref{exampleA11} of Section~\ref{sectionapp1}, the genus one free energy of $M$ has the expression
\beq\label{genus1Fcp1}
F^M_1(v,u,v_1,u_1) = \frac1{24} \log (v_1^2-e^u u_1^2) - \frac1{24} u,
\eeq
where $v=v^1, u=v^2$, 
and the genus one free energy of $S_2(M)$ has the expression
\beq\label{genus1FS2}
F^{S_2(M)}_1(\rho, \varphi, \rho_1, \varphi_1) = \frac1{24} \log \Bigl(\varphi_1^2-\frac1\rho \rho_1^2\Bigr) - \frac1{12} \log \rho.
\eeq
Here $\varphi=\hat v^2, \rho=\hat v^1$, and by~\eqref{vvhatcoord} we know that 
\beq\label{cp1legendretype}
\varphi=v, \quad \rho = e^u .
\eeq
Equations~\eqref{phkappa0} with~\eqref{cp1legendretype} give
\beq\label{evolutiontodadispersionless}
\frac{\p \varphi}{\p t^{2,0}} = \frac{\p \rho}{\p x}, \quad \frac{\p \rho}{\p t^{2,0}} = \rho \frac{\p \varphi}{\p x}.
\eeq
Substituting \eqref{evolutiontodadispersionless}, \eqref{cp1legendretype} into~\eqref{genus1FS2} and using~\eqref{genus1Fcp1}, we find that \eqref{FMgFSMg} with $g=1$ is indeed true up to an irrelevant constant that is allowed in genus one free energies.

In Example~\ref{examplea2}, the genus one free energy of $M$ has the expression
\beq\label{genus1Fa2}
F^M_1(v,u,v_1,u_1) = \frac1{24} \log \Bigl(v_1^2- \frac13 u u_1^2\Bigr),
\eeq
where $v=v^1, u=v^2$, and the genus one free energy of $S_2(M)$ has the expression
\beq\label{genus1FS2a2}
F^{S_2(M)}_1(\rho, \varphi, \rho_1, \varphi_1) =  \frac1{24} \log \Bigl(\varphi_1^2 - \frac{\sqrt{3}}{\sqrt{2\rho}} \rho_1^2\Bigr)  -  \frac1{48} \log \rho 
\eeq
Here $\varphi=\hat v^2, \rho=\hat v^1$, and by~\eqref{vvhatcoord} we know that  
\beq\label{a2legendretype}
\varphi=v, \quad \rho=\frac16 u^2.
\eeq
Equations~\eqref{phkappa0} with~\eqref{a2legendretype} give
\beq\label{evolutiona2dispersionless}
\frac{\p \varphi}{\p t^{2,0}} = \frac{\p \rho}{\p x}, \quad \frac{\p \rho}{\p t^{2,0}} = \frac{\sqrt{6\rho}}3 \frac{\p \varphi}{\p x}.
\eeq
Substituting \eqref{evolutiona2dispersionless}, \eqref{a2legendretype} into~\eqref{genus1FS2a2} and using~\eqref{genus1Fa2}, we find that \eqref{FMgFSMg} with $g=1$ is indeed true up to an irrelevant constant that is allowed in genus one free energies.

More generally, consider the two-dimensional Frobenius manifold with potential 
\beq\label{F2dgeneral}
F^M(v,u)= \frac12 v^2 u + f(u), \quad f(u) := c \, u^m, \quad D=\frac{m-3}{m-1},
\eeq
where $c$ is an arbitrarily given non-zero constant, $v=v^1, u=v^2$, and $m\neq 0$, $m\neq1$, $m\neq 2$, $m\neq \frac32$. 
The $\p/\p t^{2,0}$ flow in the principal hierarchy of~$M$ reads 
\beq\label{t20flowf}
 \frac{\p u}{\p t} = \frac{\p v}{\p x}, \quad \frac{\p v}{\p t}= f'''(u) \frac{\p u}{\p x}, \quad t=t^{2,0}.
\eeq
We have
\beq\label{F12d}
F_1^M(v,u,v_1,u_1) = \frac1{24}\log\bigl(v_1^2-c\,m(m-1)(m-2)u^{m-3}u_1^2\bigr) -\frac{(m-3)(m-4)}{24 \, (m-1)} \log u.
\eeq
Consider the Legendre-type transformation $S_2$. 
Let $\varphi=\hat v^2, \rho=\hat v^1$, i.e., by~\eqref{vvhatcoord} 
\beq\label{map2d2}
\varphi=v, \quad \rho=f''(u).
\eeq 
The potential of the $S_2(M)$ has the expression
\beq\label{F1S22d}
F^{S_2(M)}(\rho, \varphi) = \frac12 \varphi^2 \rho + \frac{(cm)^{-\frac{1}{m-2}}(m-1)^{-\frac{m-1}{m-2}}(m-2)^2}{2m-3} \rho^{\frac{2m-3}{m-2}}, 
\quad \widehat{D}=-D,
\eeq
and the genus one free energy of $S_2(M)$ has the expression
\beq
F^{S_2(M)}_1(\rho, \varphi, \rho_1, \varphi_1) = \frac1{24}\log\Bigl(\varphi_1^2- \frac{(cm(m-1))^{-\frac{1}{m-2}}}{m-2}\rho^{-\frac{m-3}{m-2}} \rho_1^2\Bigr) -\frac{(m-3)(2m-5)}{24 \, (m-1)(m-2)} \log \rho.
\eeq
Using \eqref{t20flowf}, \eqref{F1S22d}, \eqref{F12d}, \eqref{map2d2}, 
we find that~\eqref{FMgFSMg} with $g=1$ is indeed true up to an irrelevant constant that is allowed in genus one free energies, where in this straightforward verification of the $G$-function parts we used the elementary equality
$$
\frac{m-3}{24}-\frac{(m-3)(2m-5)}{24 \, (m-1)(m-2)} (m-2) = -\frac{(m-3)(m-4)}{24 \, (m-1)}.
$$

Finally, consider the case 
$$
F^M(v,u)= \frac12 v^2 u + c \, u^{\frac32}, \quad D=-3,
$$
where $c\neq0$.
The verification procedure for~\eqref{FMgFSMg} with $g=1$ is similar to the above, and we will only list some useful explicit expressions. 
Equations~\eqref{F2dgeneral}--\eqref{F12d} still hold; in particular, 
\beq\label{F12d32}
F_1^M(v,u,v_1,u_1) = \frac1{24}\log\Bigl(v_1^2+\frac{3c}{8} u^{-\frac32}u_1^2\Bigr) - \frac{5}{16} \log u,
\eeq
and $u_t=v_x$, $v_t=-\frac38 c \, u^{-\frac32} u_x$, $t=t^{2,0}$.
We have
\beq\label{logfm}
F^{S_2(M)}(\rho, \varphi) = \frac12 \varphi^2 \rho -\frac{9}{16} c^2  \log \rho, \quad \widehat{D}=3,
\eeq
as well as
$$
F^{S_2(M)}_1(\rho, \varphi, \rho_1, \varphi_1) = \frac1{24}\log\Bigl(\varphi_1^2 + \frac{9}8 c^2 \rho^{-3} \rho_1^2\Bigr) + \frac12 \log \rho.
$$
Here,  
$$
\varphi=v, \quad \rho=\frac{3c}{4} u^{-\frac12}.
$$
For the analytic aspect of the Frobenius manifold $S_2(M)$ with potential~\eqref{logfm} we refer to~\cite{Gu01}.

Another corollary of Theorem~\ref{mainthm1229} is from the perspective of integrable systems. 
\begin{cor}\label{cortau}
Let $M$ be a semisimple calibrated Frobenius manifold, and $S_\kappa(M)$ its Legendre-type transformation.
An arbitrary tau-function for the Dubrovin--Zhang hierarchy of~$M$ is 
also a tau-function for the Dubrovin--Zhang hierarchy of~$S_{\kappa}(M)$; vice versa.  
\end{cor}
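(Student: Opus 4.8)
The plan is to upgrade Theorem~\ref{mainthm1229}, which identifies the distinguished topological/$\kappa$th tau-functions, to the statement for an \emph{arbitrary} tau-function, exploiting that the underlying identities (Corollaries~\ref{simplecor514} and~\ref{thiscorollary}) are universal relations between differential polynomials and do not single out one solution. Let $\tau(\bt;\e)$ be a tau-function for the Dubrovin--Zhang hierarchy of~$M$, arising from a solution ${\bf u}^{\rm norm}(\bt;\e)$, so that
\beq
\e^2 \frac{\p^2 \log \tau(\bt;\e)}{\p t^{\alpha,m_1}\p t^{\beta,m_2}} = \Omega^M_{\alpha,m_1;\beta,m_2}\big|_{u^{\rm norm}_{\rho,\ell}\mapsto \p_x^\ell u^{\rm norm}_\rho(\bt;\e)}, \quad \alpha,\beta=1,\dots,n,\, m_1,m_2\ge0.
\eeq
First I would take the dispersionless limit: the $\e^0$-part of ${\bf u}^{\rm norm}$ is a solution ${\bf v}(\bt)$ to the principal hierarchy of~$M$, and by Lemma~\ref{propints521} its Legendre image $\hat{\bf v}(\bt)=\hat\phi\circ\phi^{-1}({\bf v}(\bt))$ solves the principal hierarchy of~$S_\kappa(M)$.

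Next I would reconstruct from $\hat{\bf v}(\bt)$ the corresponding solution $\hat{\bf u}^{\rm norm}(\bt;\e)$ of the Dubrovin--Zhang hierarchy of~$S_\kappa(M)$ by applying the quasi-Miura transformation~\eqref{qm1225} with $F^{S_\kappa(M)}_g$ in place of $F^M_g$; since the quasi-Miura map is a bijection between solutions of the principal hierarchy and of the Dubrovin--Zhang hierarchy~\cite{DZ-norm}, combining it with Lemma~\ref{propints521} furnishes a bijection between the solution spaces of the two Dubrovin--Zhang hierarchies. It then remains to check that the two-point correlation functions agree after substitution, namely that $\Omega^M_{\alpha,m_1;\beta,m_2}$ evaluated on ${\bf v}(\bt)$ (with $\p_x=\p_{t^{\iota,0}}$-jets) coincides with $\Omega^{S_\kappa(M)}_{\alpha,m_1;\beta,m_2}$ evaluated on $\hat{\bf v}(\bt)$ (with $\p_{\hat x}=\p_{t^{\kappa,0}}$-jets), as functions of~$\bt$. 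At genus zero this is precisely Corollary~\ref{simplecor514}; for $g\ge1$ it follows by differentiating twice in $t^{\alpha,m_1},t^{\beta,m_2}$ the identity of Corollary~\ref{thiscorollary}, which equates $F^M_g$ and $F^{S_\kappa(M)}_g$ as functions of the common times once the jets are related through the Legendre map and the flow~\eqref{phkappa0}. Hence the defining equations for the tau-function of $\hat{\bf u}^{\rm norm}$ over~$S_\kappa(M)$ coincide with those satisfied by~$\tau$, so $\tau$ is a tau-function for the Dubrovin--Zhang hierarchy of~$S_\kappa(M)$; the converse is obtained by symmetry, as the Legendre-type transformation of~$S_\kappa(M)$ given by $\frac{\p}{\p\hat v^\iota}$ recovers~$M$.

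The step I expect to be the main obstacle is the higher-genus matching of the two-point functions, where the space/time exchange inherent to the Legendre transformation enters: $\Omega^M_{\alpha,m_1;\beta,m_2}$ is a differential polynomial in the normal coordinates of~$M$ with distinguished derivation $\p_x=\p_{t^{\iota,0}}$, while $\Omega^{S_\kappa(M)}_{\alpha,m_1;\beta,m_2}$ uses $\p_{\hat x}=\p_{t^{\kappa,0}}$. Transferring the $x$-jet representation of $F^M_g$ to the $t^{\kappa,0}$-jet representation of $F^{S_\kappa(M)}_g$---which is the content of Corollary~\ref{thiscorollary} and relies on the invertibility of $c^\alpha_{\kappa\beta}$ together with~\eqref{phkappa0} (equivalently~\eqref{phkappa0iota})---must be executed for the full two-point functions rather than merely the free energies, and one must confirm that the residual freedom of adding an affine-linear function of~$\bt$ to $\log\tau$ is respected on both sides.
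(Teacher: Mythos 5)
Your argument is correct and follows essentially the same route as the paper: the paper's proof likewise combines Corollary~\ref{simplecor514}, Corollary~\ref{thiscorollary} and the definition~\eqref{defiOmega} to conclude that the all-genus two-point functions $\Omega^{S_\kappa(M)}_{\alpha,m_1;\beta,m_2}$ and $\Omega^{M}_{\alpha,m_1;\beta,m_2}$ coincide as differential polynomials (in either jet system), from which the statement about arbitrary tau-functions is immediate. Your extra scaffolding — explicitly transporting the solution through Lemma~\ref{propints521} and the quasi-Miura map — is just an unpacking of what the paper leaves implicit, not a different method.
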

\begin{proof} 
Using \eqref{identitiestwopointOmega}, \eqref{FMgFSMg}, and~\eqref{defiOmega}, we find the identities 
\beq
\Omega_{\alpha,m_1;\beta,m_2}^{S_\kappa(M)}
= \Omega_{\alpha,m_1;\beta,m_2}^{M}, \quad \forall\,\alpha,\beta=1,\dots,n,\, \forall\,m_1,m_2\geq0, \label{OmOmidallgenus}
\eeq
whose validity can be understood as functions of ${\bf v}, {\bf v}_1, {\bf v}_2$, \dots (or alternatively 
as functions of $\hat {\bf v}, \hat {\bf v}_1, \hat {\bf v}_2$, \dots). Thus they are also valid as differential
polynomials of~${\bf u}^{\rm norm}$ (or alternatively as differential polynomials of $\hat {\bf u}^{\rm norm}$). The corollary is proved.
\end{proof}

For the special example when $M$ is the 
Frobenius manifold corresponding to the extended bigraded Toda hierarchy of $(m,1)$-type~\cite{Carlet}
and for a special~$\kappa$, 
Theorem~\ref{mainthm1229} and Corollary~\ref{cortau}
can be deduced from~\cite{CDZ, CvdLPS, CvdLPS2, Du09, FY, FYZ, Y1, Y2} (cf.~Examples \ref{exampleA11}, \ref{exampleA21}). 

By taking $\beta=\kappa, m_1=m_2=0$ in~\eqref{OmOmidallgenus} we get an expression of $\hat{u}^{\rm norm}_\alpha$ as 
a differential polynomial of~$\bf u^{\rm norm}$ (the differentiation is with respect to~$x=t^{\iota,0}$); similarly, 
by taking $\beta=\iota, m_1=m_2=0$ in~\eqref{OmOmidallgenus} we get an expression of $u^{\rm norm}_\alpha$ as 
a differential polynomial of~$\hat{\bf u}^{\rm norm}$ (the differentiation is with respect to~$t^{\kappa,0}$). 
For special examples these expressions can be more explicitly calculated out; see~e.g. \cite{CDZ, FY, FYZ}.

Generalizations of Theorem~\ref{mainthm1229} and Corollary~\ref{cortau} to 
other situations (cf.~e.g.~\cite{DS, DZ-norm, HM, Manin, SS, Vekslerchik, YZ, YZhang}) will be 
given in a subsequent publication.

We end the main body of the paper with 
a second proof of Theorem~\ref{mainthm1229} by using the Givental quantization. 
\begin{proof} [A second proof of Theorem~\ref{mainthm1229}.] We have
\begin{align}
Z^{S_\kappa(M), {\rm top}}(\bt;\e)=& \tau_I({\bf u}) \Q(\hat \Theta(\hat {\bf v} ({\bf u});z))^{-1} \Q(\hat \Psi_u)^{-1} \Q( \hat \Phi({\bf u};z))e^{\Q(z U)}\Biggl(\prod_{i=1}^n Z^{\rm WK}(\bt^{(i)};\e)\Biggr) \nn\\
= & \tau_I({\bf u}) \Q(\Theta({\bf v}({\bf u});z))^{-1} \Q(\Psi_u)^{-1} \Q(\Phi({\bf u};z))e^{\Q(z U)}\Biggl(\prod_{i=1}^n Z^{\rm WK}(\bt^{(i)};\e)\Biggr), \nn
\end{align}
where we note that 
\beq
 \Q(q^{\alpha,k})=\e^{-1} (t^{\alpha,k}-\delta^{\alpha,\kappa}\delta^{k,1}).
\eeq
The theorem is proved.
\end{proof}

\begin{appendix}

\section{Further examples}\label{appexamples}
In this appendix, we will provide three more examples of Legendre-type transformations of a Frobenius manifold.
\begin{example}\label{examplep2quantumcoho}
$X=\mathbb{P}^2$ and $M=QH(\mathbb{P}^2)$. This example is famous according to Kontsevich's counting 
of degree~$d$ rational nodal curves on $\mathbb{P}^2$ (\cite{DiFI, Du96, KM94, RT}). The potential, i.e., 
the genus zero primary free energy, now has the expression
\beq
F({\bf v}) = \frac{(v^1)^2 v^3 + v^1 (v^2)^2}{2} + \sum_{d\ge1} \frac{N_{d}}{(3d-1)!} (v^3)^{3d-1} e^{dv^2},
\eeq
where $N_{d}$ denotes the number of nodal curves of genus zero with degree $d$ passing 
through $(3d-1)$ general points on~$\mathbb{P}^2$. The first few $N_d$'s are $N_1=1$, $N_2=1$, $N_3=12$, $N_4=620$. Following~\cite{Du96}, if we write 
\beq
F({\bf v}) = \frac{(v^1)^2 v^3 + v^1 (v^2)^2}{2} + \frac1{v^3} \, f\bigl(v^2+3\log v^3\bigr),
\eeq
then the WDVV equations written in terms of the function $f=f(s)$ become
\beq
27 f'''-3 f''f''' +2 f'f''' -f''^2-54 f''+33 f'-6 f =0, \quad '=\frac{d}{ds}.
\eeq
This equation implies the following 
recursive relation for $N_{d}$: 
\beq
N_{d} = \sum_{d_1,d_2>0\atop d_1+d_2=d} N_{d_1} N_{d_2} \Biggl(d_1^2 d_2^2 \binom{3d-4}{3d_1-2} - d_1^3 d_2 \binom{3d-4}{3d_1-1}\Biggr)\,.
\eeq
The charge~$D$ of~$M$ is~2, and 
the Euler vector field reads
\beq
E = v^1 \frac{\p}{\p v^1} + 3 \frac{\p}{\p v^2} - v^3 \frac{\p}{\p v^3}.
\eeq
The analyticity of~$F({\bf v})$ was known~\cite{DiFI, Du96, Du99}, and the monodromy data $\eta, \mu, R$, $S$ are given by~\cite{Du96, Du99}
\beq\label{muR1012}
\eta=\begin{pmatrix} 0 & 0 & 1 \\ 0 & 1 & 0\\ 1 & 0 & 0\end{pmatrix}, \quad \mu=\begin{pmatrix} -1 & 0 & 0\\ 0 & 0 & 0\\ 0 & 0 & 1\end{pmatrix}, 
\quad R=\begin{pmatrix} 0 & 0  & 0 \\ 3 & 0 & 0\\ 0 & 3 & 0\end{pmatrix}, \quad 
S=\begin{pmatrix} 1 & 3 & 3 \\ 0 & 1 & 3\\ 0 & 0 & 1\end{pmatrix}.
\eeq

Let $\kappa=2$. The Frobenius manifold $S_2(M)$ is of charge~0 and has the potential 
\begin{align}
F^{S_2(M)} ({\bf \hat v}) =\frac{(\hat v^2)^3}{6}+ \hat v^1 \hat v^2 \hat v^3 + e^{\hat v^3} + \frac{(\hat v^1)^3}6 e^{-\hat v^3} 
- \frac{(\hat v^1)^6}{360}e^{-3 \hat v^3}
+ \cdots. 
\end{align}
Here we note that the analyticity of $F^{S_2(M)} ({\bf \hat v})$ can be deduced from that of~$M$ and the inverse function theorem. 
The Euler vector field for $S_2(M)$ reads
\beq
E= 2\hat v^1 \frac{\p}{\p \hat v^1} + \hat v^2 \frac{\p}{\p \hat v^2} + 3 \frac{\p}{\p \hat v^3}.
\eeq
Write 
\begin{align}
F^{S_2(M)} ({\bf v}) = \frac{(\hat v^2)^3}{6}+ \hat v^1 \hat v^2 \hat v^3 + e^{\hat v^3} p\bigl((\hat v^1)^3 e^{-2\hat v^3}\bigr), \label{p2fs2}
\end{align}
where 
\beq
p(t) = \sum_{k\ge0} \frac{C_k}{(3k)!} t^k=1+O(t), \quad C_0=1,
\eeq
is a power series of one variable~$t$. Then we have
\begin{align}
&144 t^4 p''' p'' + 24 t^3 (p''' p' + 12 p''^2 ) + 27 t^2 ( p''' p + 3 p' p'') \nn\\
&\qquad + 6 t (9 p p'' +  p'^2)+6 p p' -1 =0, \qquad '=d/dt. \label{p2fs2eqG}
\end{align}
Equation~\eqref{p2fs2eqG} implies a recursion for the coefficients $C_k$ of $p(t)$. The first few values of $C_k$, $k\ge0$, are 
$$
1, ~1, ~ -2, ~104, ~ -24920, ~16361976, ~ -22819065536, ~\dots.
$$
We expect that $C_k$ are all integers. Recently, Zhengfei Huang has proved this integrality, which  
 follows from the recursion directly. 

Let $\kappa=3$. The Frobenius manifold $S_3(M)$ is of charge~$-2$ and has the potential 
\beq
F^{S_3(M)}({\bf \hat v}) =  \frac{(\hat v^3)^2 \hat v^1}2 + \frac{(\hat v^2)^2 \hat v^3}2 
+ (\log(\hat v^1)-1) \hat v^1 \hat v^2 + \frac1{24} \frac{(\hat v^2)^4}{\hat v^1} + \frac1{1260}\frac{(\hat v^2)^7}{(\hat v^1)^3} + \cdots.   
\eeq
The Euler vector field now reads
\beq
E= 3 \hat v^1 \frac{\p }{\p \hat v^1} + 2\hat v^2 \frac{\p}{\p \hat v^2} + \hat v^3 \frac{\p}{\p \hat v^3}.
\eeq
Write 
\beq
F^{S_3(M)}({\bf \hat v}) = \frac{(\hat v^3)^2 \hat v^1}2 + \frac{(\hat v^2)^2 \hat v^3}2 
+ \left(\log(\hat v^1)-1\right) \hat v^1 \hat v^2 + \hat v^1 \hat v^2 \, m \biggl(\frac{(\hat v^2)^3}{(\hat v^1)^2}\biggr), \label{p2fs3}
\eeq
where 
\beq\label{expinitialS3p2}
m(r)= \sum_{k\ge1} \frac{4^{k-1} M_k}{(3k+1)!} r^k=\frac{r}{24}+O(r^2), \quad M_1=1,
\eeq
is a power series of one variable~$r$.
Then we have
\begin{align}
&9 ( 3 + 2 \dot{m} + 6 \ddot{m} ) \dddot{m} - 3 (4 \dot{m} + 3 \ddot{m} ) \ddot{m} - 3 (1+\dot{m}) \dot{m}
= r (8 \dddot{m} -2 \dot{m} +1), \quad \dot{}=r \frac{d}{dr}. \label{ODES3p2}
\end{align}
The coefficients $M_k$ $(k\ge1)$ are determined by~\eqref{ODES3p2} and the initial value given in~\eqref{expinitialS3p2}, 
which begin with
$$1, ~1, ~8, ~177, ~6234, ~-67965, ~\dots.$$ 
We expect that $M_k$ are all integers. This integrality away from~2 has been proved by Zhengfei Huang.
Again this part is obtained directly from the recursion.

Theorem~\ref{maintheorem} says that 
the monodromy data $\mu, R, S$ of the Frobenius manifolds $S_2(M)$ and $S_3(M)$   
could have the same expression as those in \eqref{muR1012}. 
\end{example}

The next example is about the Legendre-type transformations of a tensor product.

\begin{example}
Consider the $\mathbb{P}^1$ Frobenius manifold~$M_{\mathbb{P}^1}=QH(\mathbb{P}^1)$, which has the potential~\eqref{p1potentialf}.
The tensor product 
\beq
M := M_{\mathbb{P}^1} \otimes M_{\mathbb{P}^1}
\eeq
gives the Frobenius 
manifold of charge $D=2$ for the quantum cohomology of $\mathbb{P}^1\times \mathbb{P}^1$~\cite{DiFI, KMK}, which has the potential
\beq
F= \frac12 (v^{(1,1)})^2 v^{(2,2)} + v^{(1,1)} v^{(1,2)} v^{(2,1)} 
+ \sum_{k,l\ge0 \atop k+l\ge 1} \frac{N_{k,l}}{(2k+2l-1)!} e^{kv^{(1,2)}+lv^{(2,1)}} (v^{(2,2)})^{2k+2l-1}
\eeq
and the Euler vector field
\beq
E= v^{(1,1)} \frac{\p }{\p v^{(1,1)}} - v^{(2,2)} \frac{\p }{\p v^{(2,2)}} + 2  \frac{\p }{\p v^{(1,2)}} + 2 \frac{\p }{\p v^{(2,1)}} .
\eeq
Here $N_{k,l}$ counts the number of rational curves on the quadric, with bi-degree $(k,l)$, passing through 
$2(k + l)-1$ points. 
If we write 
\beq
F=\frac12 (v^{(1,1)})^2 v^{(2,2)} + v^{(1,1)} v^{(1,2)} v^{(2,1)} 
+  \frac1{v^{(2,2)}} \, f\bigl(v^{(1,2)}+2\log v^{(2,2)}, v^{(2,1)}+2\log v^{(2,2)} \bigr),
\eeq
where 
\beq\label{ansatzfp1p1}
f(x,y)= \sum_{k,l\ge0 \atop k+l\ge 1} \frac{N_{k,l}}{(2k+2l-1)!} e^{kx+ly},
\eeq
then the WDVV equations become 6 non-trivial and nonlinear PDEs, including for example 
\begin{align}
& 0= f_{xxx}f_{yyy} -f_{xyy} f_{xxy} + 2 f_{xy}-4 f_{xyy}-4 f_{xxy}. 
\end{align}
The 6 PDEs form an overdetermined system, which uniquely determine all the numbers $N_{k,l}$ in~\eqref{ansatzfp1p1},
 with the initial data $N_{0,1}=N_{1,0}=1$ only. 
The matrices $\eta$, $\mu$, $R$, $S$ for~$M$ are given by 
\beq
\eta=\begin{pmatrix} 0 & 0 & 0 & 1 \\ 0 & 0 & 1 & 0\\  0& 1 & 0 & 0 \\ 1 & 0 & 0 &0 \end{pmatrix}, 
\;
\mu = \begin{pmatrix} -1 & 0 & 0 & 0 \\
0 & 0 & 0 & 0\\
0 & 0 & 0 & 0\\
0 & 0 & 0 & 1 
\end{pmatrix}, \;
R=\begin{pmatrix} 0 & 0 & 0 & 0 \\
2 & 0 & 0 & 0\\
2 & 0 & 0 & 0\\
0 & 2 & 2 & 0 
\end{pmatrix}, \;
S = 
\begin{pmatrix} 
1 & 2 & 2 & 4 \\
0 & 1 & 0 & 2\\
0 & 0 & 1 & 2\\
0 & 0 & 0 & 1 
\end{pmatrix}.
\eeq

Let $\kappa=(2,2)$. The Frobenius manifold $S_{(2,2)}(M)$ is of charge~$-2$ and has the potential 
\begin{align}
&F^{S_{(2,2)}(M)} ({\bf \hat v}) =  \frac{(\hat v^4)^2}{2} \hat v^1 + \hat v^2 \hat v^3 \hat v^4 
+ \hat v^1 \left(-\hat v^2- \hat v^3+ \hat v^2 \log (\hat v^2)+ \hat v^3 \log (\hat v^3)\right)\nn\\ 
&\qquad  +\frac{1}{\hat v^2 \hat v^3} \frac{(\hat v^1)^3}{3!}
+ \frac{2\hat v^2+2\hat v^3}{ (\hat v^2)^3 (\hat v^3)^3}\frac{(\hat v^1)^5}{5!} 
+ \frac{24 (\hat v^2)^2 + 38 (\hat v^2) (\hat v^3) + 24 (\hat v^3)^2}{(\hat v^2)^5 (\hat v^3)^5}\frac{(\hat v^1)^7}{7!}+ \cdots.\label{p1p1f22}
\end{align}
The Euler vector field for $S_{(2,2)}(M)$ is given by
\beq
E= 3 \hat v^1 \frac{\p }{\p \hat v^1} + 2 \hat v^2 \frac{\p }{\p \hat v^2} +2 \hat v^3 \frac{\p }{\p \hat v^3} + \hat v^4 \frac{\p }{\p \hat v^4}.
\eeq
Write 
\beq
F^{S_{(2,2)}(M)} ({\bf \hat v}) =  \frac{(\hat v^4)^2}{2} \hat v^1 + \hat v^2 \hat v^3 \hat v^4 
+ \hat v^1 \hat v^2 \log \hat v^2 + \hat v^1\hat v^3 \log \hat v^3 + 
(\hat v^1)^{\frac{5}{3}} \, p\biggl(\frac{(\hat v^1)^{\frac23}}{\hat v^2}, \frac{(\hat v^1)^{\frac23}}{\hat v^3}\biggr),
\eeq
where 
\beq\label{ansatzfp1p1s22}
p=p(a,b)=-\frac1a-\frac1b+\sum_{k,l\ge0} \frac{C_{k,l}}{(\frac23 (k+l)+\frac53)!} a^k b^l, \quad C_{k,l}\in \CC.
\eeq
Then the WDVV equations in terms of the function~$p=p(x,y)$ become 6 non-trivial and nonlinear PDEs. For example, one of them is of the form
\beq
27 a b + 10 p + \dots+ 12 a^5 b^5 p_{aaa} p_{bbb} = 0.
\eeq
The 6 PDEs are overdetermined and they uniquely determine all the coefficients $C_{k,l}$, $k,l\ge0$, in~\eqref{ansatzfp1p1s22}.
We expect that $C_{k,l}$ vanishes if and only if 
$k+l \not\equiv 2 \, ({\rm mod} \, 3)$ or $2k<l+1$ or $2l<k+1$, and that if $C_{k,l}$ is not zero it is a positive integer.

Let $\kappa=(2,1)$. The Frobenius manifold $S_{(2,1)}(M)$ is of charge~$0$ and has the potential 
\begin{align}
&F^{S_{(2,1)}(M)} ({\bf \hat v}) = \frac{(\hat v^3)^2}{2} \hat v^2 + \hat v^1 \hat v^3 \hat v^4 
+ \hat v^1\hat v^2  \Bigl(\log (\hat v^1)-1\Bigr) + e^{\hat v^4}\hat v^2 +\frac{ e^{\hat v^4}}{ \hat v^1} \frac{(\hat v^2)^3}{3!}  \nn\\ 
&\quad + \biggl(\frac{e^{\hat v^4}}{ (\hat v^1)^2}+2\frac{e^{2 \hat v^4}}{ (\hat v^1)^3}\biggr) \frac{(\hat v^2)^5}{5!} 
+ \biggl(\frac{e^{\hat v^4}}{ (\hat v^1)^3} + 20 \frac{e^{2 \hat v^4}}{ (\hat v^1)^4}+ 24\frac{e^{3\hat v^4}}{ (\hat v^1)^5}\biggr) \frac{(\hat v^2)^7}{7!}
+ \cdots.\label{p1p1f21}
\end{align}
The Euler vector field for $S_{(2,1)}(M)$ is given by
\beq
E= 2 \hat v^1 \frac{\p }{\p \hat v^1} + \hat v^2 \frac{\p }{\p \hat v^2} + \hat v^3 \frac{\p }{\p \hat v^3} + 2 \frac{\p }{\p \hat v^4}.
\eeq
Write 
\beq
F^{S_{(2,1)}(M)} ({\bf \hat v}) = \frac{(\hat v^3)^2}{2} \hat v^2 + \hat v^1 \hat v^3 \hat v^4 
+ \hat v^1\hat v^2  \Bigl(\log (\hat v^1)-1\Bigr)  + 
(v^1)^{\frac{3}{2}} \, m\biggl(\frac{\hat v^2}{(\hat v^1)^{\frac12}}, \frac{e^{\hat v^4}}{\hat v^1}\biggr),
\eeq
where $m=m(r_1,r_2)$ is a power series in $r_1,r_2$ of the form
\beq\label{ansatzfp1p1s21}
m(r_1, r_2)= \sum_{m_1,m_2\ge1} a_{m_1,m_2} r_1^{m_1} r_2^{m_2}, \quad a_{1,1}=1. 
\eeq
Then the WDVV equations in terms of the function $m=m(r_1,r_2)$ again contain 6 nontrivial and nonlinear PDEs. 
This overdetermined system uniquely determine~$p$ with initial data $a_{1,1}=1$. 
We expect that $a_{m_1,m_2}$ vanishes if $m_1$ is even, 
as well as that for any $k\ge0$ the product $(2k+1)! a_{2k+1,m_2}$ is a positive integer if $1\leq m_2\leq k$ and vanishes if $m_2>k$. 
\end{example}

The next example is on the Legendre-type transformation of the Frobenius manifold corresponding to the CCC group of $A_1^{(1,1)}$-type.
\begin{example} \label{examplea111legendre}
Consider the three-dimensional Frobenius manifold with potential given by
\beq
F = \frac12 (v^1)^2 v^3 + \frac12 v^1 (v^2)^2 - \frac{(v^2)^4}{16} \gamma(v^3),
\eeq
where $\gamma$ is the following analytic function:
\beq
\gamma(v^3)=\frac{1}{6} \biggl(1-24 \sum_{m\ge1} \sigma(m) e^{mv^3}\biggr).
\eeq
Here, $\sigma(m)$ denotes the sum of all the divisors of~$m$. 
This Frobenius manifold was introduced by Dubrovin~\cite{Du96}. It is of charge $D=1$ and has the Euler vector field
\beq
E=v^1 \frac{\p }{\p v^1} + \frac12 v^2 \frac{\p }{\p v^2}.
\eeq
The WDVV equation written in terms of~$\gamma$ is the {\it Chazy equation}~\cite{Du96}:
\beq
\gamma''' = 6 \gamma \gamma'' -9 \gamma'^2.
\eeq
The matrices $\eta$, $\mu$ and $R$ for this example are given by 
\beq
\eta=\begin{pmatrix} 0 & 0 & 1 \\ 0 & 1 & 0\\ 1 & 0 & 0\end{pmatrix}, \quad \mu = \begin{pmatrix} -\frac12 & 0 & 0 \\
0 & 0 & 0\\
0 & 0 & \frac12 \end{pmatrix}, \quad R=0.
\eeq

Let $\kappa=2$. The Frobenius manifold $S_2(M)$ is of charge $\hat D=0$ and has the potential 
\begin{align}
&F^{S_2(M)} ({\bf \hat v}) = \frac{(\hat v^2)^3}{6} + \hat v^1 \hat v^2 \hat v^3 + \frac{1}{24} \hat v^1 (\hat v^3)^3 
+ \frac{1}{4} (\hat v^1)^2 \biggl(2 \log \biggl(\frac{\hat v^1}{(\hat v^3)^3}\biggr)-3\biggr) \nn\\ 
&\qquad -\frac{(\hat v^1)^3}{(\hat v^3)^3} + \frac72 \frac{(\hat v^1)^4}{(\hat v^3)^6} - 23\frac{(\hat v^1)^5}{(\hat v^3)^9} + 201\frac{(\hat v^1)^6}{(\hat v^3)^{12}} 
- \frac{10368}5 \frac{ (\hat v^1)^7}{ (\hat v^3)^{15}} +23871 \frac{ (\hat v^1)^8}{(\hat v^3)^{18}} + \cdots.\label{a111fs2}
\end{align}
The Euler vector field now reads
\beq
E= \frac32 \hat v^1 \frac{\p }{\p \hat v^1} +  \hat v^2 \frac{\p }{\p \hat v^2} + \frac12 \hat v^3 \frac{\p }{\p \hat v^3}.
\eeq
Write 
\beq
F^{S_2(M)} ({\bf \hat v}) = \frac{(\hat v^2)^3}{6}+\hat v^1 \hat v^2 \hat v^3 + 
(\hat v^1)^2 \, p \biggl(\frac{\hat v^1}{(\hat v^3)^3}\biggr).
\eeq
Then the WDVV equation in terms of the function~$p=p(t)$ reads
\begin{align}
12 \, t \, ( \dot{p} + 3 \ddot{p}) \bigl(2 \dot{p} + 3 \ddot{p} + \dddot{p}\bigr) = 1, \qquad \dot{}=t \frac{d}{dt}. 
\label{S2ccca1}
\end{align}
We thank Don Zagier~\cite{Zagier} for the simplification of this equation.
Write 
\beq
p(t)=\frac1{24 \, t} +  \frac{1}{2} \log t- \frac34  + \sum_{k\ge1} Q_k t^k, \quad Q_1=-1. 
\eeq
Then~\eqref{S2ccca1} gives a recursion for the coefficients $Q_k$. 
We expect that $kQ_k$ are all integers, with first few being
$-1, 7, -69, 804$. This integrality is not easy to obtain directly from the recursion, 
but is now partially proved by Don Zagier, who has found out a beautiful change of variable deduced from the definition of the 
Legendre-type transformation. 

Let $\kappa=3$. The Frobenius manifold $S_3(M)$ is of charge $\hat D=-1$ and has the potential 
\begin{align}
F^{S_3(M)}({\bf \hat v}) = & \frac12 (\hat v^2)^2 \hat v^3 + \frac12 \hat v^1 (\hat v^3)^2 
+ \frac{1}{4} (\hat v^1)^2 \biggl(2 \log \biggl(\frac{(\hat v^2)^4}{(\hat v^1)^3}\biggr)+9 -12 \log 2\biggr)   \nn\\
& + \frac3{32} \frac{(\hat v^2)^4}{\hat v^1} + \frac3{4096} \frac{ (\hat v^2)^8}{ (\hat v^1)^4}+\frac1{65536} \frac{(\hat v^2)^{12}}{(\hat v^1)^7} 
- \frac{9}{16777216} \frac{ (\hat v^2)^{16}}{(\hat v^1)^{10}} + \cdots.
\end{align}
Here the coordinates ${\bf \hat v}$ and ${\bf v}$ are related by 
\begin{align}
& \hat v^3 = \hat v_1 = \frac{\p^2 F}{\p v^1 \p v^3} = v^1, \nn\\
& \hat v^2 = \hat v_2 = \frac{\p^2 F}{\p v^2 \p v^3} = - \frac{(v^2)^3}{4} \gamma'(v^3), \nn\\
& \hat v^1 = \hat v_3 = \frac{\p^2 F}{\p v^3 \p v^3} = - \frac{(v^2)^4}{16} \gamma''(v^3). \nn
\end{align}
The Euler vector field now reads
\beq
E= 2 \hat v^1 \frac{\p }{\p \hat v^1} +  \frac32 \hat v^2 \frac{\p }{\p \hat v^2} + \hat v^3 \frac{\p }{\p \hat v^3}.
\eeq
Write 
\begin{align}
F^{S_3(M)}({\bf \hat v}) = & \frac12 (\hat v^2)^2 \hat v^3 + \frac12 \hat v^1 (\hat v^3)^2 
+ (\hat v^1)^2 \, m\biggl(\frac{(\hat v^2)^4}{(\hat v^1)^3}\biggr),\label{a111fs3}
\end{align}
where $m(r)=\frac12 \log r + \frac3{32} r + O(r^2)$ (we omit a quadratic term in $F^{S_3(M)}({\bf \hat v})$). Then 
\begin{align}\label{simplifieda111fs3}
32 (\theta(m)) \bigl(\theta^2(1+\theta)(m)\bigr) - 16 (\theta^2(m)) \bigl(\theta^2(13-12 \theta) (m)\bigr)= r \, 3 \theta (3\theta-1)(3\theta-2) (m) ,
\end{align}
where $\theta:=r\frac{d}{dr}$.
Again this gives a recursion for the coefficients $W_k$ in 
$$m(r)=\frac12 \log r + \sum_{k\ge1} W_k r^k$$ with $W_1=3/32$, and 
we expect that $2^{6k} k W_k/6$ are integers.
\end{example}

It is pointed out by Don Zagier \cite{Zagier} that the ODE~\eqref{ODES3p2} of Example~\ref{examplep2quantumcoho} and the ODEs \eqref{S2ccca1}, \eqref{simplifieda111fs3} of Example~\ref{examplea111legendre} all can be easily reduced to second-order ones. It will be interesting to study the relations between 
these ODEs and the Painlev\'e VI found in~\cite{DiFI, Du96} (cf.~\cite{Gu01}).

\end{appendix}

\medskip

\noindent {\bf Acknowledgements}.  The author wishes to thank Boris Dubrovin for his guidance. He also thanks 
the anonymous referees for constructive comments that help to improve a lot the presentation of the paper. 
The author is also grateful to Don Zagier and Youjin Zhang for their advice.
The work was supported by NSFC No.~12371254, NSFC No.~12061131014 and CAS No. YSBR-032.

\smallskip

\noindent {\bf Data availability}. 
Data sharing is not applicable to this article as no datasets were generated or analyzed during the current study.

\end{document}